\numberwithin{equation}{section}
\numberwithin{figure}{section}
\newlength{\lyxlistindent}      
\theoremstyle{plain}
\newtheorem{thm}{\protect\theoremname}
  \theoremstyle{definition}
  \newtheorem{defn}[thm]{\protect\definitionname}
  \theoremstyle{definition}
  \newtheorem{example}[thm]{\protect\examplename}
  \theoremstyle{remark}
  \newtheorem{rem}[thm]{\protect\remarkname}
  \theoremstyle{plain}
  \newtheorem{cor}[thm]{\protect\corollaryname}
  \theoremstyle{plain}
  \newtheorem{lem}[thm]{\protect\lemmaname}
  \theoremstyle{plain}
  \newtheorem{prop}[thm]{\protect\propositionname}
\providecommand{\abs}[1]{\left\lvert#1\right\rvert}
\newcommand{\freccia}{\longrightarrow} 
\newcommand{\eps}{\varepsilon} 
\renewcommand{\phi}{\varphi} 
\newcommand{\field}[1]{\mathbb{#1}}
\newcommand{\R}{\field{R}}                        
\newcommand{\N}{\field{N}}                        
\newcommand{\st}[1]{{#1^\circ}} 
\newcommand{\D}{\mathcal{D}} 
\newcommand{\then}{\ \Longrightarrow \ }
\newcommand{\CC}{\mathbb C}
\newcommand{\Rtil}{\widetilde \R}
\newcommand{\supp}{\mbox{supp}}
\newcommand{\cinfty}{{\mathcal C}^\infty}
\newcommand{\comp}{\Subset}
\newcommand{\gs}{{\mathcal G}}
\newcommand{\ns}{{\mathcal N}}
\newcommand{\esm}{{\mathcal E}_M}
\newcommand{\Om}{\Omega}
\newcommand{\otilc}{\widetilde \Omega_c}
\newcommand{\sint}[1]{\langle#1\rangle}
\newcommand{\Fint}[1]{\langle#1\rangle_{\text{\rm F}}}
\newcommand{\tauInt}[1]{\langle#1\rangle_\tau}
\newcommand{\Eball}{B^{{\scriptscriptstyle \text{\rm E}}}}
\newcommand{\Gcinf}{\widetilde{\mathcal{G}}}
\newcommand{\GenR}{\Rtil}
\newcommand{\tGen}{\widetilde{\mathcal{G}}}
\newcommand{\EMod}{{{\mathcal E}_M}}
\newcommand{\Null}{{\mathcal N}}
\newcommand{\conv}{\mathrm{conv}}
\newcommand{\co}[1]{{#1}^c}
  \providecommand{\corollaryname}{Corollary}
  \providecommand{\definitionname}{Definition}
  \providecommand{\examplename}{Example}
  \providecommand{\lemmaname}{Lemma}
  \providecommand{\propositionname}{Proposition}
  \providecommand{\remarkname}{Remark}
\providecommand{\theoremname}{Theorem}
  \providecommand{\corollaryname}{Corollary}
  \providecommand{\definitionname}{Definition}
  \providecommand{\examplename}{Example}
  \providecommand{\lemmaname}{Lemma}
  \providecommand{\remarkname}{Remark}
\providecommand{\theoremname}{Theorem}
  \providecommand{\corollaryname}{Corollary}
  \providecommand{\definitionname}{Definition}
  \providecommand{\examplename}{Example}
  \providecommand{\lemmaname}{Lemma}
  \providecommand{\propositionname}{Proposition}
  \providecommand{\remarkname}{Remark}
\providecommand{\theoremname}{Theorem}
\begin{document}

\title{Strongly internal sets and generalized smooth functions}

\author{Paolo Giordano \and Michael Kunzinger \and Hans Vernaeve}

\thanks{P.\ Giordano was supported by grants M1247-N13 and P25116-N25 of
the Austrian Science Fund FWF}

\address{\textsc{P. Giordano, University of Vienna, Austria}}

\email{paolo.giordano@univie.ac.at }

\thanks{M.\ Kunzinger was supported by grants P23714 and P25326 of the Austrian
Science Fund FWF}

\address{\textsc{M. Kunzinger, University of Vienna, Austria}}

\email{michael.kunzinger@univie.ac.at}

\thanks{H.\ Vernaeve was supported by grant 1.5.138.13N of the Research
Foundation Flanders FWO}

\address{\textsc{H. Vernaeve, Ghent University, Belgium}}

\email{hvernaev@cage.UGent.be}

\subjclass[2010]{46F30,03H05,46S20}

\keywords{Colombeau generalized functions, strongly internal sets, generalized
smooth functions}
\begin{abstract}
Based on a refinement of the notion of internal sets in Colombeau's
theory, so-called strongly internal sets, we introduce the space of
generalized smooth functions, a maximal extension of Colombeau generalized
functions. Generalized smooth functions as morphisms between sets
of generalized points form a sub-category of the category of topological
spaces. In particular, they can be composed unrestrictedly.
\end{abstract}
\maketitle

\section{Introduction}

Colombeau's nonlinear theory of generalized functions (\cite{C1,C2})
is based on viewing generalized functions as equivalence classes of
smooth maps, encoding degrees of singularity in terms of asymptotic
properties of nets of representatives. It thereby lends itself in
a quite straightforward manner to modelling irregular setups in partial
differential equations, geometry or applications, in particular in
mathematical physics (\cite{Col92,MObook,GKOS}). Basically, singular
objects are modelled as nets of smooth maps and classical operations
are lifted to the generalized setting by applying them component-wise
to these nets. While successful in applications, this approach lacks
strong general existence theorems, comparable to the functional-analytic
foundations of distribution theory.

To remedy this situation, the past decade has seen a number of fundamental
contributions to the structure theory of algebras of generalized functions
(particularly relevant for the purposes of this paper are \cite{AFJ,AFJ09,AJ,AJOS,AJFO,Gtop,Gtop2,GV,GK2,Ver09,Ver10,Ver11}).
The unifying theme of these works is to consider Colombeau generalized
functions as set-theoretical functions on suitable spaces of generalized
points and then to work directly with these functions.

In the present work we continue these investigations by introducing
a generalization of Colombeau-type generalized functions, which we
call generalized smooth functions (GSF). This terminology is intended
to stress the conceptual analogy between these generalized functions
and the theory of standard smooth functions. Generalized smooth functions
are set-theoretic maps on sets of generalized points that satisfy
the minimal logical conditions necessary to obtain well-defined maps
obeying the standard asymptotic estimates of the Colombeau approach.
They are the natural extension of Colombeau generalized functions
to general domains. At the same time, they display optimal set-theoretical
properties. In particular, sets of generalized points, together with
generalized smooth maps form a subcategory of the category of topological
maps.

Our constructions strongly rely on the further development of the
concept (itself inspired by nonstandard analysis) of internal sets,
see \cite{Ob-Ve}. Just as in the case of classical smooth functions,
GSF are locally Lipschitz functions. Therefore, we also study this
notion for functions defined on and valued in generalized points.

\section{\label{sec:BasicNotions}Basic notions}

In this section, we fix some basic notations and terminology from
Co\-lom\-beau's theory. For details we refer to \cite{C1,C2,MObook,GKOS}.
In the naturals $\N=\{0,1,2,\ldots\}$ we include zero. Let $\Om$
be an open subset of $\R^{n}$ and denote by $I$ the interval $(0,1]$.
The (special) Colombeau algebra on $\Om$ is defined as the quotient
$\gs(\Om):=\esm(\Om)/\ns(\Om)$ of \emph{moderate nets} over \emph{negligible
nets}, where the former is 
\begin{multline*}
\esm(\Om):=\{(u_{\eps})\in\cinfty(\Omega)^{I}\mid\forall K\comp\Om\,\forall\alpha\in\N^{n}\,\exists N\in\N:\sup_{x\in K}|\partial^{\alpha}u_{\eps}(x)|=O(\eps^{-N})\}
\end{multline*}
 and the latter is 
\begin{multline*}
\ns(\Om):=\{(u_{\eps})\in\cinfty(\Omega)^{I}\mid\forall K\comp\Om\,\forall\alpha\in\N^{n}\,\forall m\in\N:\sup_{x\in K}|\partial^{\alpha}u_{\eps}(x)|=O(\eps^{m})\}.
\end{multline*}

Throughout this paper, every asymptotic relation is for $\eps\to0^{+}$.
Nets in $\esm(\Omega)$ are written as $(u_{\eps})$, and $u=[u_{\eps}]$
denotes the corresponding equivalence class in $\gs(\Om)$. For $(u_{\eps})\in\ns(\Om)$
we also write $(u_{\eps})\sim0$. $\Omega\mapsto\gs(\Omega)$ is a
fine and supple sheaf of differential algebras and there exist sheaf
embeddings of the space of Schwartz distributions $\D'$ into $\gs$
(cf.\ \cite{GKOS,PZ}).

The ring of constants in $\gs$ is denoted by $\Rtil$ or $\widetilde{\CC}$,
respectively, and is called ring of Colombeau generalized numbers
(CGN). It is an ordered ring with respect to $[x_{\eps}]\le[y_{\eps}]$
iff $\exists[z_{\eps}]\in\Rtil$ such that $(z_{\eps})\sim0$ and
$x_{\eps}\le y_{\eps}+z_{\eps}$ for $\eps$ sufficiently small. As
usual $x<y$ means $x\le y$ and $x\ne y$. Even if this order is
not total, we still have the possibility to define the infimum $[x_{\eps}]\wedge[y_{\eps}]:=[\min(x_{\eps},y_{\eps})]$,
and analogously the supremum of two elements. More generally, the
space of generalized points in $\Omega$ is $\widetilde{\Omega}=\Omega_{M}/\sim$,
where $\Omega_{M}=\{(x_{\eps})\in\Omega^{I}\mid\exists N\in\N:|x_{\eps}|=O(\eps^{-N})\}$
is called the set of moderate nets and $(x_{\eps})\sim(y_{\eps})$
if $|x_{\eps}-y_{\eps}|=O(\eps^{m})$ for every $m\in\N$. By $\mathcal{N}$
we will denote the set of all negligible nets of real numbers $(x_{\eps})\in\R^{I}$,
i.e. such that $(x_{\eps})\sim0$. If $\mathcal{P}(\eps)$ is a property
of $\eps\in I$, we will also sometimes use the notation $\forall^{0}\eps:\ \mathcal{P}(\eps)$
to denote $\exists\eps_{0}\in I\,\forall\eps\in(0,\eps_{0}]:\ \mathcal{P}(\eps)$.

The space of compactly supported generalized points $\otilc$ is defined
by $\Omega_{c}/\!\sim$, where $\Omega_{c}:=\{(x_{\eps})\in\Omega^{I}\mid\exists K\comp\Omega\,\exists\eps_{0}\,\forall\eps<\eps_{0}:\ x_{\eps}\in K\}$
and $\sim$ is the same equivalence relation as in the case of $\widetilde{\Omega}$.
The set of near-standard points of a given set $A\subseteq\R^{n}$
is $A^{\bullet}:=\{x\in\widetilde{A}\mid\exists\lim_{\eps\to0^{+}}x_{\eps}=:\st{x}\in A\}$.
Any Colombeau generalized function (CGF) $u\in\gs(\Om)$ acts on generalized
points from $\otilc$ by $u(x):=[u_{\eps}(x_{\eps})]$ and is uniquely
determined by its point values (in $\Rtil$) on compactly supported
generalized points (\cite{GKOS,Ndipl}), but not on standard points.
A CGF $[u_{\eps}]$ is called compactly-bounded (c-bounded) from $\Omega$
into $\Omega'$ if for all $K\comp\Omega$ there exists $K'\comp\Omega'$
such that $u_{\eps}(K)\subseteq K'$ for $\eps$ small. This type
of CGF is closed with respect to composition. Moreover, if $u\in\gs(\Omega)$
is c-bounded from $\Omega$ into $\Omega'$ and $v\in\gs(\Omega')$,
then $[v_{\eps}\circ u_{\eps}]\in\gs(\Omega)$.

Our notations for intervals are: $[a,b]:=\{x\in\Rtil\mid a\le x\le b\}$,
$[a,b]_{\R}:=[a,b]\cap\R$. Moreover, for $x,y\in\Rtil^{n}$ we will
write $x\approx y$ if $x-y$ is infinitesimal, i.e.\ if $|x-y|\le r$
for all $r\in\R_{>0}$.

Topological methods in Colombeau's theory are usually based on the
so-called sharp topology (\cite{Biag,S0,S,AJ,AJOS,M,GV}), which is
the topology generated by balls $B_{\rho}(x)=\{y\in\Rtil^{n}\mid|y-x|<\rho\}$,
where $|-|$ is the natural extension of the Euclidean norm to $\Rtil^{n}$,
i.e.\ $|[x_{\eps}]|:=[|x_{\eps}|]$, and $\rho\in\Rtil_{>0}$ is
positive invertible (\cite{AFJ,AFJ09,GK2}). Henceforth, we will also
use the notation $\Eball_{\rho}(x)=\{y\in\R^{n}\mid|y-x|<\rho\}$
for Euclidean balls and $\Rtil^{*}:=\{x\in\Rtil\mid x\text{ is invertible}\}$.
The sharp topology can also be defined by an ultrametric: Define a
pseudovaluation on $\Rtil$ by 
\begin{align*}
v & :\esm\longrightarrow(-\infty,\infty]\\
v & ((u_{\eps})):=\sup\{b\in\R\mid|u_{\eps}|=O(\eps^{b})\}.
\end{align*}
 Then $v$ is well-defined since $v((u_{\eps})+(n_{\eps}))=v((u_{\eps}))$
for all $(n_{\eps})\in\ns$; $v(u)=\infty$ if and only if $u=0$,
$v(u\cdot w)\ge v(u)+v(w)$, $v(u+w)\ge v(u)\wedge v(w)$, and $v(u-w)=v(w-u)$.
Letting $|-|_{e}:\Rtil\to[0,\infty)$, $|u|_{e}:=\exp(-v(u))$ it
follows that $|u+v|_{e}\le\max(|u|_{e},|v|_{e})$, as well as $|uv|_{e}\le|u|_{e}|v|_{e}$.
This induces the translation invariant complete ultrametric 
\begin{align*}
d & _{s}:\Rtil\times\Rtil\longrightarrow\R_{+}\\
d & _{s}(u,v):=|u-v|_{e}
\end{align*}
 on $\Rtil$, which in turn generates the sharp topology on $\Rtil$.
We will call sharply open any open set in the sharp topology.

Moreover, Garetto in \cite{Gtop,Gtop2} extended the above construction
to arbitrary locally convex spaces by functorially assigning a space
of CGF $\gs_{E}$ to any given locally convex space $E$. The seminorms
of $E$ can then be used to define pseudovaluations which in turn
induce a generalized locally convex topology on the $\widetilde{\CC}$-module
$\gs_{E}$, again called sharp topology.

Given $S\subseteq I$, by $e_{S}$ we will denote the equivalence
class in $\Rtil$ of the characteristic function of $S$. The $e_{S}$
are idempotents, and satisfy $e_{S}+e_{S^{c}}=1$ and $e_{S}\not=0$
if and only if $0\in\overline{S}$. They play a central role in the
algebraic theory of Colombeau generalized numbers (cf.\ \cite{AJ,Ver10}).

Finally, we recall that if $(A_\eps)$ is a net of subsets of $\R^n$, then the internal set generated by $(A_\eps)$ is defined as
\begin{equation}
[A_{\eps}]=\left\{ [x_{\eps}]\in\Rtil^{n}\mid x_{\eps}\in A_{\eps}\text{ for }\eps\text{ small}\right\}.
\end{equation}
This type of sets have been introduced in Colombeau theory in \cite{Ob-Ve,Ver11} to deal with domains of generalized functions and to study topological properties of sets
of generalized points.

\section{Strongly internal sets generated by a topology}

We start by defining a family of topologies on $\Rtil^{n}$ depending
on a set of positive and invertible generalized numbers. Recall that
for any $r\in\Rtil$, $r>0$, $B_{r}(x)$ denotes the open ball with
respect to the generalized Euclidean norm in $\Rtil^{n}$.
\begin{defn}
\label{def:topFromInvertPos}We say that $\mathcal{I}$ is a \emph{set
of radii} if
\begin{enumerate}[leftmargin=*,label=(\roman*),align=left ]
\item $\mathcal{I}\subseteq\Rtil_{>0}^{*}$ is a non-empty subset of positive
invertible generalized numbers. 
\item For all $r,s\in\mathcal{I}$ the infimum $r\wedge s\in\mathcal{I}$. 
\item $k\cdot r\in\mathcal{I}$ for all $r\in\mathcal{I}$ and all $k\in\R_{>0}$. 
\end{enumerate}

Let $\mathcal{I}$ be a set of radii, then the family of subsets 
\[
\mathcal{U}_{\mathcal{I}}(x):=\{U\subseteq\Rtil^{n}\mid\exists r\in\mathcal{I}:\ B_{r}(x)\subseteq U\}\qquad(x\in\Rtil^{n})
\]
 is called the \emph{neighborhood system induced by} $\mathcal{I}$.

\end{defn}
The legitimacy of this name is demonstrated by the following result,
whose proof follows from the corresponding definitions.
\begin{thm}
\label{thm:topFromInvertPos}If $\mathcal{I}$ is a set of radii,
then the family $\mathcal{U}_{\mathcal{I}}$ is a non empty neighborhood
system on $\Rtil^{n}$. The topology $\tau_{\mathcal{I}}$ induced
by this neighborhood system is called the \emph{topology on $\Rtil^{n}$
induced by the set of radii $\mathcal{I}$. }%
\end{thm}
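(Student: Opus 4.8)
The plan is to verify that $\maU_{\maI}$ satisfies the standard neighborhood-system axioms (as in, e.g., Bourbaki or Kelley), namely: (N1) every $U\in\maU_{\maI}(x)$ contains $x$; (N2) $\maU_{\maI}(x)$ is a filter, i.e. it is closed under supersets and under finite intersections, and is non-empty; and (N3) for every $U\in\maU_{\maI}(x)$ there is $V\in\maU_{\maI}(x)$ with $U\in\maU_{\maI}(y)$ for all $y\in V$. Once these are checked, the general theory of neighborhood systems produces a unique topology $\tau_{\maI}$ on $\Rtil^n$ for which $\maU_{\maI}(x)$ is exactly the filter of neighborhoods of $x$, and this topology is the declared $\tau_{\maI}$.

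First I would dispatch (N1): if $B_r(x)\subseteq U$ with $r\in\maI$, then since $r>0$ in $\Rtil$ we have $x\in B_r(x)\subseteq U$. Non-emptiness of $\maU_{\maI}(x)$ follows because $\maI$ is non-empty by Definition~\ref{def:topFromInvertPos}(i), so $\Rtil^n\in\maU_{\maI}(x)$ (or already $B_r(x)\in\maU_{\maI}(x)$ for any $r\in\maI$). Closure under supersets is immediate from the definition. For closure under finite intersections it suffices to treat two sets: if $B_r(x)\subseteq U$ and $B_s(x)\subseteq U'$ with $r,s\in\maI$, put $t:=r\wedge s$, which lies in $\maI$ by axiom (ii); then $B_t(x)\subseteq B_r(x)\cap B_s(x)\subseteq U\cap U'$ since $t\le r$ and $t\le s$ (here one uses that $y\in B_t(x)$ means $|y-x|<t\le r$, hence $y\in B_r(x)$, and similarly for $s$). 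So $U\cap U'\in\maU_{\maI}(x)$.

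The only step with any real content is (N3). Given $U\in\maU_{\maI}(x)$, choose $r\in\maI$ with $B_r(x)\subseteq U$ and set $V:=B_{r/2}(x)$; note $r/2=\tfrac12\cdot r\in\maI$ by axiom (iii), so $V\in\maU_{\maI}(x)$. I claim $U\in\maU_{\maI}(y)$ for every $y\in V$: indeed $B_{r/2}(y)\in\maU_{\maI}(y)$ since $r/2\in\maI$, and for $z\in B_{r/2}(y)$ the triangle inequality for the generalized Euclidean norm gives $|z-x|\le|z-y|+|y-x|<r/2+r/2=r$, so $z\in B_r(x)\subseteq U$; hence $B_{r/2}(y)\subseteq U$ and $U\in\maU_{\maI}(y)$. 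The one point to be slightly careful about is that strict inequalities in $\Rtil$ may fail to be "total" (the order is only partial), but the argument above uses only $|y-x|<r/2$ together with $r/2+r/2=r$ and transitivity of $<$, all of which hold in the ordered ring $\Rtil$; no trichotomy is needed. I expect this triangle-inequality/halving argument to be the crux, and it is exactly where axiom (iii) (stability under multiplication by positive reals, in particular by $1/2$) is used; axiom (ii) is what makes the neighborhood filters closed under intersection. With (N1)--(N3) established, the standard construction of a topology from a neighborhood system applies verbatim, completing the proof.
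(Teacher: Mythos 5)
Your proposal is correct and is essentially the argument the paper has in mind: the paper omits the proof as ``following from the corresponding definitions,'' and your verification of the neighborhood-system axioms --- using axiom (ii) via $r\wedge s$ for closure under intersections and axiom (iii) via $r/2$ together with the triangle inequality for the last axiom --- is exactly that routine check, including the correct observation that only the partial order properties of $\Rtil$ (no trichotomy) are needed.
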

\begin{example}
\label{exa:topInducedByRad}\ 
\begin{enumerate}[leftmargin=*,label=(\roman*),align=left ]
\item \label{enu:sharpTop}If $\mathcal{I}=\Rtil_{>0}^{*}$ then $\tau_{\mathcal{I}}$
is the sharp topology. Among the balls $B_{r}(x)$ in this topology
we can also have cases where both $r\in\mathcal{I}$ and $x\in\Rtil^{n}$
are not near standard. 
\item \label{enu:FermatTop}If $\mathcal{I}=\R_{>0}$ then $\tau_{\mathcal{I}}$
is called the Fermat topology on $\Rtil^{n}$. Among the balls $B_{r}(x)$
in this topology we can only have cases where the radius is a standard
positive real number. For this reason, open sets in this topology
are also called \emph{large open sets}. Let us note that if $S\subseteq\R^{\bullet}$,
i.e.\ if $S$ consists of near-standard points only (see Section
\ref{sec:BasicNotions}), then the trace of the Fermat topology on
$S$ is induced by the Fermat pseudometric $d_{\text{F}}(x,y)=|\st{x}-\st{y}|$.
In fact, if $B_{r}^{\text{F}}(x)=\{y\in\R^{\bullet}\mid|\st{y}-\st{x}|<r\}$
are the balls in this metric, then $B_{r/2}(x)\cap S\subseteq B_{r}^{\text{F}}(x)\cap S\subseteq B_{r}(x)\cap S$. This justifies the name Fermat topology (introduced in \cite{GK2})
for the topology of large open sets. 
\item \label{enu:setOfRadii-I_a}Let $a\in\R_{>0}$, and set $\mathcal{I}_{a}:=\{[r\cdot\eps^{b}]\in\Rtil\mid r\in\R_{>0}\ ,\ 0<b<a\}$,
then $\mathcal{I}_{a}$ is a set of radii that generates a topology
(on $\Rtil^{n}$) strictly coarser than the sharp and strictly finer
than the Fermat ones. Open sets defined by $\mathcal{I}_{a}$ cannot
contain neighborhoods of radius smaller than $[\eps^{a}]$. 
\item Let $H\subseteq\Rtil_{>0}^{*}$ be a non empty set of positive and
invertible CGN, then $\mathcal{J}_{H}:=\left\{ \bigwedge_{i=1}^{n}r_{i}\cdot h_{i}\mid n\in\N_{>0}\ ,\ r\in\R_{>0}^{n}\ ,\ h\in H^{n}\right\} $
is the smallest set of radii containing $H$. In particular, if $H=\{h\}$
and $h\approx0$ then $\mathcal{J}_{\{h\}}=\left\{ r\cdot h\mid r\in\R_{>0}\right\} $
generates a topology strictly finer than the Fermat one and strictly
coarser than the sharp one. On the contrary, if $h$ is infinite,
i.e.\ $|h|>s$ for all $s\in\R_{>0}$, then it generates a topology
strictly coarser than the Fermat one. Finally, $\mathcal{J}_{\{[\eps^{a}]\}}$
generates a topology strictly finer than the topology generated by
the set of radii $\mathcal{I}_{a}$ described in \ref{enu:setOfRadii-I_a}. 
\end{enumerate}
\end{example}
In the present work, we will only develop examples \ref{enu:sharpTop}
and \ref{enu:FermatTop}.

Any topology on $\Rtil^{n}$ can be used to introduce an equivalence
relation on $\Rtil^{n}$ which permits to define a corresponding class
of strongly internal sets:
\begin{defn}
\label{def:identifiedByTau}Let $\tau$ be a topology on $\Rtil^{n}$,
and $x,y\in\Rtil^{n}$, then we say that $x$, $y$ \emph{are identified
by $\tau$}, and we write $x\asymp_{\tau}y$ if for all $\tau$-open
sets $U\in\tau$ 
\[
x\in U\iff y\in U.
\]

\end{defn}
Clearly, $\asymp_{\tau}$ is an equivalence relation on $\Rtil^{n}$.
\begin{example}
\label{exa:identifiedRelForSharpAndFermat}\ 
\begin{enumerate}[leftmargin=*,label=(\roman*),align=left ]
\item \label{enu:sharpIdentified}If $\tau$ is the sharp topology, then
$x\asymp_{\tau}y$ if and only if $x=y$. 
\item \label{enu:FermatIndentified}If $\tau$ is the Fermat topology, then
$x\asymp_{\tau}y$ if and only if $x\approx y$. 
\end{enumerate}
\end{example}
The following notion concerns membership for $\eps$-dependent objects;
it assures that the class of nets we will consider is always closed
under choosing different representatives with respect to $\asymp_{\tau}$.
\begin{defn}
\label{def:strongMembership} Let $(A_{\eps})$ be a net of subsets
of $\R^{n}$. Moreover, let $(x_{\eps})$ be a net of points in $\R_{M}^{n}$,
then we say that $(x_{\eps})$ \emph{$\tau$-strongly belongs to}
$(A_{\eps})$ and we write 
\[
x_{\eps}\in_{\tau}A_{\eps}
\]
 if 
\begin{enumerate}[leftmargin=*,label=(\roman*),align=left ]
\item $x_{\eps}\in A_{\eps}$ for $\eps$ sufficiently small; 
\item If $[x'_{\eps}]\asymp_{\tau}[x_{\eps}]$, then also $x'_{\eps}\in A_{\eps}$
for $\eps$ sufficiently small. 
\end{enumerate}

Therefore, we can consider the set 
\begin{equation}
\tauInt{A_{\eps}}:=\{[x_{\eps}]\in\Rtil^{n}\mid x_{\eps}\in_{\tau}A_{\eps}\}\label{strongInternal}
\end{equation}
 which, generally speaking, is a subset of the corresponding internal
set 
\begin{equation}
[A_{\eps}]=\left\{ [x_{\eps}]\in\Rtil^{n}\mid x_{\eps}\in A_{\eps}\text{ for }\eps\text{ small}\right\}, \label{eq:internalSetVMO}
\end{equation}
 as defined in \cite{Ob-Ve,Ver11}, because of our definition of strong
membership. Subsets of $\Rtil^{n}$ of the form \eqref{strongInternal}
will be called \emph{$\tau$-strongly internal}. In particular we
simply use the name \emph{strongly internal set} for the case where $\tau$
is the sharp topology and \emph{large internal set} for the case where
$\tau$ is the Fermat topology. In the first one, we use the notations
$\in_{\eps}$ and $\sint{A_{\eps}}$; in the second one we use $\in_{\text{F}}$
and $\Fint{A_{\eps}}$.

\end{defn}
\begin{rem}
\label{rem:largeOpen}\ 
\begin{enumerate}[leftmargin=*,label=(\roman*),align=left ]
\item $\Rtil=\sint{(-e^{\frac{1}{\eps}},e^{\frac{1}{\eps}})}$ is strongly
internal. 
\item \label{enu:otilcIsLargeOpen-1}If $\Omega\subseteq\R^{n}$ is any
open set, then $\widetilde{\Omega}_{c}$ is a large open set. In fact,
if $x\in\widetilde{\Omega}_{c}$, we have $x_{\eps}\in K\comp\Omega$
for $\eps$ small. Since $K$ is compact, $d(K,\R^{n}\setminus\Omega)>0$.
Taking $r\in\R_{>0}$ strictly less than this distance, any $[y_{\eps}]\in B_{r}(x)$
is compactly supported as well. 
\item It is easy to prove that $\sint{A_{\eps}\cap B_{\eps}}=\sint{A_{\eps}}\cap\sint{B_{\eps}}$,
whereas the corresponding property for internal sets is false in general. 
\item Let $\mathcal{P}(-)$ be a property of generalized points in $\Rtil^{n}$ (i.e.\ $\mathcal{P}(-)$ has to be thought of as a syntactical object), 
and set $\ulcorner\mathcal{P}\urcorner:=\{x\in\Rtil^{n}\mid\mathcal{P}(x)\}$ (i.e.\ $\ulcorner\mathcal{P}\urcorner$ is the set-theoretical interpretation of $\mathcal{P}$).
For $x$, $y\in\Rtil^{n}$, we have that $x\asymp_{\tau}y$ if and
only if for each property $\mathcal{P}$, if $\ulcorner\mathcal{P}\urcorner\in\tau$
then $\mathcal{P}(x)$ holds if and only if $\mathcal{P}(y)$ holds.
We can say that $x$ and $y$ are identified by $\tau$ if and only
if these generalized points have the same properties $\mathcal{P}(-)$
which can be interpreted as open sets in the topology $\tau$, i.e.\ such
that the set-theoretical interpretation $\ulcorner\mathcal{P}\urcorner$ of the property $\mathcal{P}$ is an open set in $\tau$. Moreover, if $\mathcal{P}$
is one of these properties and $\mathcal{P}(x)$ holds, then we can
say it is a $\tau$-stable property, i.e.\ also $\mathcal{P}(y)$
holds for $y$ sufficiently near to $x$ with respect to $\tau,$
i.e.\ if $y\asymp_{\tau}x$.
\end{enumerate}
\end{rem}
The following result provides a certain geometrical intuition about
this notion of $\tau$-strong membership and justifies its name. It
also underscores the differences with internal sets as studied in
\cite{Ob-Ve,Ver11}.
\begin{thm}
\label{prop:strongMembershipAndDistanceComplement}
Let $(A_{\eps})$ be a net of subsets of $\R^{n}$ indexed for $\eps\in I$, and let
$(x_{\eps})\in\R_{M}^{n}$. Then the following properties hold:
\begin{enumerate}[leftmargin=*,label=(\roman*),align=left ]
\item $x_{\eps}\in_{\eps}A_{\eps}$ if and only if there exists some $q\in\R_{>0}$
such that $d(x_{\eps},A_{\eps}^{c})>\eps^{q}$ for $\eps$ sufficiently
small, where $A_{\eps}^{c}:=\R^{n}\setminus A_{\eps}$. Therefore,
$x_{\eps}\in_{\eps}A_{\eps}$ if and only if $[d(x_{\eps},A_{\eps}^{c})]$
is invertible in $\Rtil$.
\item $x_{\eps}\in_{\text{F}}A_{\eps}$ if and only if there exists some
$r\in\R_{>0}$ such that $d(x_{\eps},A_{\eps}^{c})>r$ for $\eps$
sufficiently small. 
\end{enumerate}
\end{thm}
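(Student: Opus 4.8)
The plan is to prove the two characterizations separately, with (ii) being essentially a simpler variant of (i). For both, the key observation is that the negation of ``$(x_\eps)$ $\tau$-strongly belongs to $(A_\eps)$'' must be translated into a quantitative statement about how close the representatives come to the complement $A_\eps^c$, using the explicit description of $\asymp_\tau$ from Example \ref{exa:identifiedRelForSharpAndFermat}: $x\asymp_{\text{sharp}}y$ iff $x=y$, and $x\asymp_{\text{F}}y$ iff $x\approx y$. So condition (ii) in Definition \ref{def:strongMembership} of strong membership amounts, in the sharp case, to: every representative of $[x_\eps]$ eventually lies in $A_\eps$; and in the Fermat case, to: every net $(x'_\eps)$ with $|x_\eps - x'_\eps| \le r$ for all $r \in \R_{>0}$ eventually lies in $A_\eps$.

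For part (i), I would argue both implications. For the ``if'' direction, suppose $d(x_\eps, A_\eps^c) > \eps^q$ for $\eps$ small and some $q \in \R_{>0}$. Then first $x_\eps \in A_\eps$ for $\eps$ small (the distance to the complement is positive). Next, if $(x'_\eps) \sim (x_\eps)$, i.e.\ $|x_\eps - x'_\eps| = O(\eps^m)$ for all $m$, then in particular $|x_\eps - x'_\eps| < \eps^q \le d(x_\eps, A_\eps^c)$ for $\eps$ small, so $x'_\eps$ cannot lie in $A_\eps^c$, hence $x'_\eps \in A_\eps$ for $\eps$ small; this gives $x_\eps \in_\eps A_\eps$. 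For the ``only if'' direction I would argue by contradiction: if $d(x_\eps, A_\eps^c) \le \eps^q$ fails to hold for every $q$ and $\eps$ small — more precisely, if for every $q \in \R_{>0}$ there are arbitrarily small $\eps$ with $d(x_\eps, A_\eps^c) \le \eps^q$ — then I can choose a decreasing sequence $\eps_k \to 0$ with $d(x_{\eps_k}, A_{\eps_k}^c) \le \eps_k^k$, pick near-points $y_{\eps_k} \in A_{\eps_k}^c$ (or within distance $2\eps_k^k$ of the infimum if the infimum is not attained) with $|x_{\eps_k} - y_{\eps_k}| \le 2\eps_k^k$, and set $x'_\eps := y_\eps$ on the $\eps_k$ and $x'_\eps := x_\eps$ otherwise. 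Then $(x'_\eps) \in \R_M^n$, $(x'_\eps) \sim (x_\eps)$, but $x'_{\eps_k} \in A_{\eps_k}^c$ for infinitely many $\eps$, so it is false that $x'_\eps \in A_\eps$ for $\eps$ small — contradicting $x_\eps \in_\eps A_\eps$. The final sentence of (i) is then just the remark that $[d(x_\eps, A_\eps^c)] \in \Rtil$ is invertible precisely when its value is $> \eps^q$ for some $q$ and $\eps$ small, which is the standard criterion for invertibility in $\Rtil$ (and one should note $(d(x_\eps, A_\eps^c))$ is moderate since $(x_\eps)$ is and distances are bounded by $|x_\eps| + \text{const}$ once $x_\eps \in A_\eps$, or one simply caps it).

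For part (ii) the structure is identical but with $\eps^q$ replaced by a fixed standard $r \in \R_{>0}$ and the relation $\sim$ replaced by $\approx$. For ``if'': if $d(x_\eps, A_\eps^c) > r$ for $\eps$ small, then $x_\eps \in A_\eps$ for $\eps$ small, and any $(x'_\eps)$ with $[x'_\eps] \approx [x_\eps]$ satisfies $|x_\eps - x'_\eps| \le r/2 < d(x_\eps, A_\eps^c)$ for $\eps$ small, hence $x'_\eps \in A_\eps$ for $\eps$ small; so $x_\eps \in_{\text{F}} A_\eps$. For ``only if'': if for every $r \in \R_{>0}$ there are arbitrarily small $\eps$ with $d(x_\eps, A_\eps^c) \le r$, I build along a sequence $\eps_k \to 0$ points $y_{\eps_k} \in A_{\eps_k}^c$ with $|x_{\eps_k} - y_{\eps_k}| \le 1/k$, extend by $x_\eps$ elsewhere, and get $(x'_\eps) \approx (x_\eps)$ violating eventual membership in $A_\eps$, contradicting $x_\eps \in_{\text{F}} A_\eps$.

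The main obstacle, and the point deserving care, is the ``only if'' direction: producing the competing net $(x'_\eps)$. Two technical points need attention. First, the infimum defining $d(x_\eps, A_\eps^c)$ need not be attained (and $A_\eps^c$ could be empty for some $\eps$, in which case $d(x_\eps, A_\eps^c) = +\infty$ by convention and that $\eps$ causes no trouble), so one must select $y_\eps$ within a controlled slack of the infimum rather than exactly realizing it. Second, one must check the constructed net is still moderate: since we only modify $x_\eps$ on a sequence $\eps_k$ and by an amount $\le 2\eps_k^k$ (resp.\ $\le 1/k$), $(x'_\eps)$ stays in $\R_M^n$ and indeed differs from $(x_\eps)$ negligibly (resp.\ infinitesimally). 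Everything else is bookkeeping with the $\forall^0\eps$ notation and the equivalence $x_\eps \in_\tau A_\eps \iff x_\eps \in A_\eps$ for $\eps$ small together with closure under $\asymp_\tau$-equivalent representatives.
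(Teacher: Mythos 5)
Your proposal is correct and follows essentially the same route as the paper's proof: the direct implication via the triangle-inequality observation that any $\sim$-equivalent (resp.\ $\approx$-equivalent) representative stays at distance $>\tfrac12\eps^q$ (resp.\ $>r/2$) from $A_\eps^c$, and the converse by diagonalizing over $q$ to extract a sequence $\eps_k\searrow 0$ with $d(x_{\eps_k},A_{\eps_k}^c)\le\eps_k^k$, picking $x'_{\eps_k}\in A_{\eps_k}^c$ within $2\eps_k^k$, and interleaving to contradict strong membership. Your extra remarks on the non-attained infimum, the empty-complement convention, and moderateness of the modified net are correct bookkeeping that the paper leaves implicit.
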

\begin{proof}
We proceed for the sharp topology, since the case of the Fermat one
can be treated analogously. Let $x_{\eps}\in_{\eps}A_{\eps}$ and
suppose to the contrary that there exists a sequence $\eps_{k}\searrow0$
such that $d(x_{\eps_{k}},A_{\eps_{k}}^{c})\le\eps_{k}^{k}$ for all
$k\in\N$. For each $k$, pick some $x_{k}'\in A_{\eps_{k}}^{c}$
with $|x_{k}'-x_{\eps_{k}}|<2\eps_{k}^{k}$ and choose $(x'_{\eps})\sim(x_{\eps})$
such that $x'_{\eps_{k}}=x'_{k}$ for all $k$. Then $x_{\eps_{k}}'\not\in A_{\eps_{k}}$
for all $k$, contradicting $x_{\eps}\in_{\eps}A_{\eps}$. Conversely,
let $d(x_{\eps},A_{\eps}^{c})>\eps^{q}$ for $\eps$ small. Then in
particular, $x_{\eps}\in A_{\eps}$. Also, if $(x_{\eps}')\sim(x_{\eps})$
then $d(x_{\eps}',A_{\eps}^{c})>(1/2)\eps^{q}$ for $\eps$ small,
so $x_{\eps}'\in A_{\eps}$. Thus, $x_{\eps}\in_{\eps}A_{\eps}$.
\end{proof}
Hence for $x=[x_{\eps}]\in\Rtil^{n}$, $x_{\eps}\in_{\eps}A_{\eps}$
if and only if $[x_{\eps}]$ is in the interior of $\sint{A_{\eps}}$
with respect to the sharp topology
\begin{cor}
\label{cor:stronglyInternalSharplyOpen} $\sint{A_{\eps}}=\sint{\mathring{A_{\eps}}}$
is open in the sharp topology.
\end{cor}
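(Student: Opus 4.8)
The plan is to derive both assertions from the distance characterization in Theorem~\ref{prop:strongMembershipAndDistanceComplement}(i), together with the elementary fact that in $\R^{n}$ the Euclidean distance to a set equals the distance to its closure.

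First I would establish the set equality $\sint{A_{\eps}}=\sint{\mathring{A_{\eps}}}$. Since $(\mathring{A_{\eps}})^{c}=\overline{A_{\eps}^{c}}$, we have $d(x_{\eps},(\mathring{A_{\eps}})^{c})=d(x_{\eps},\overline{A_{\eps}^{c}})=d(x_{\eps},A_{\eps}^{c})$ for every $\eps$ and every $x_{\eps}\in\R^{n}$. Hence $[d(x_{\eps},A_{\eps}^{c})]$ is invertible in $\Rtil$ if and only if $[d(x_{\eps},(\mathring{A_{\eps}})^{c})]$ is, and by the ``therefore'' clause of Theorem~\ref{prop:strongMembershipAndDistanceComplement}(i) this gives $x_{\eps}\in_{\eps}A_{\eps}\iff x_{\eps}\in_{\eps}\mathring{A_{\eps}}$, i.e.\ the two strongly internal sets coincide. (Note this is consistent with the fact that $x_{\eps}\in_{\eps}A_{\eps}$ forces $d(x_{\eps},A_{\eps}^{c})>\eps^{q}>0$ for $\eps$ small, so that an entire Euclidean ball around $x_{\eps}$ lies in $A_{\eps}$ and thus $x_{\eps}\in\mathring{A_{\eps}}$ anyway.)

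Then I would prove sharp-openness directly. Let $x=[x_{\eps}]\in\sint{A_{\eps}}$; by Theorem~\ref{prop:strongMembershipAndDistanceComplement}(i) there is $q\in\R_{>0}$ with $d(x_{\eps},A_{\eps}^{c})>\eps^{q}$ for $\eps$ small. Put $\rho:=\tfrac{1}{4}[\eps^{q}]\in\Rtil_{>0}^{*}$ and consider the sharp ball $B_{\rho}(x)$. Given $y=[y_{\eps}]\in B_{\rho}(x)$ with representative $(y_{\eps})$, the relation $|y-x|\le\rho$ in $\Rtil$ yields a negligible net $(z_{\eps})$ with $|y_{\eps}-x_{\eps}|\le\tfrac{1}{4}\eps^{q}+z_{\eps}$ for $\eps$ small; since $z_{\eps}\le\tfrac{1}{4}\eps^{q}$ eventually, $|y_{\eps}-x_{\eps}|\le\tfrac{1}{2}\eps^{q}$ for $\eps$ small. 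The triangle inequality for the distance to $A_{\eps}^{c}$ then gives $d(y_{\eps},A_{\eps}^{c})\ge d(x_{\eps},A_{\eps}^{c})-|y_{\eps}-x_{\eps}|>\eps^{q}-\tfrac{1}{2}\eps^{q}=\tfrac{1}{2}\eps^{q}$ for $\eps$ small, so $[d(y_{\eps},A_{\eps}^{c})]$ is invertible and $y_{\eps}\in_{\eps}A_{\eps}$ by Theorem~\ref{prop:strongMembershipAndDistanceComplement}(i); hence $y\in\sint{A_{\eps}}$ (the choice of representative is immaterial, since condition (ii) of Definition~\ref{def:strongMembership} makes $\in_{\eps}$ independent of it). Thus $B_{\rho}(x)\subseteq\sint{A_{\eps}}$ and $\sint{A_{\eps}}$ is sharply open.

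These steps are essentially routine; the only point needing care is the passage between order/invertibility statements in $\Rtil$ and pointwise estimates on representatives — specifically, choosing the radius $\rho$ small enough (the factor $\tfrac{1}{4}$ suffices) that the negligible error coming from $|y-x|\le\rho$ cannot consume the margin $\eps^{q}$ in the distance bound. Everything else is formal bookkeeping on top of Theorem~\ref{prop:strongMembershipAndDistanceComplement}.
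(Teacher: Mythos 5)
Your proof is correct and takes the same route as the paper, which presents the corollary as an immediate consequence of Theorem~\ref{prop:strongMembershipAndDistanceComplement} (stating only that $x_{\eps}\in_{\eps}A_{\eps}$ iff $[x_{\eps}]$ lies in the sharp interior of $\sint{A_{\eps}}$, without written details). You have merely made explicit the radius bookkeeping for the sharp ball and the identity $d(x,(\mathring{A_{\eps}})^{c})=d(x,\overline{A_{\eps}^{c}})=d(x,A_{\eps}^{c})$, both of which are exactly what the paper's one-line derivation relies on.
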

Note that even in the simplest case of a constant net $\Omega=\Omega_{\eps}$,
the corresponding strongly internal set $\sint{\Omega}$ is contained
in $\widetilde{\Omega}$, but equality in general does not hold (see
\ref{enu:strongInternalAndInterior2} in Example \ref{exp:strongInt}).
\begin{example}
\label{exp:strongInt}\ 
\begin{enumerate}[leftmargin=*,label=(\roman*),align=left ]
\item \label{enu:strongInternalAndInterior} Let $(a_{\eps}),(b_{\eps})\in\R_{M}$,
with $a_{\eps}<b_{\eps}$, then from Prop.\ \ref{prop:strongMembershipAndDistanceComplement}
it is easy to prove that $\sint{[a_{\eps},b_{\eps}]}=\sint{(a_{\eps},b_{\eps})}=\{x\in(a,b)\mid x-a,b-x\in\Rtil^{*}_{>0}\}$,
where we recall that $\Rtil^{*}:=\{x\in\Rtil\mid x\text{ is invertible}\}$.%

\item \label{enu:strongInternalAndInterior2}We always have that $\sint{A_{\eps}}\subseteq\text{int}_{\text{s}}([A_{\eps}])$,
where $[A_{\eps}]$ is the internal set generated by the net $(A_{\eps})_{\eps}$
in the sense of \cite{Ob-Ve,Ver11} (see \eqref{eq:internalSetVMO})
and $\text{int}_{\text{s}}(B)$ is the interior of $B\subseteq\Rtil^{n}$
in the sharp topology. Indeed, $\sint{A_{\eps}}\subseteq[A_{\eps}]$
by definition, and $\sint{A_{\eps}}$ is open in the sharp topology
by Corollary \ref{cor:stronglyInternalSharplyOpen}. However, the
reverse inclusion is false: let $A_{\eps}=\R\setminus\{0\}$. Then
$\sint{A_{\eps}}=\Rtil^{*}\subsetneq\text{int}_{\text{s}}([A_{\eps}])=\text{int}_{\text{s}}(\Rtil)=\Rtil$.%
\end{enumerate}
\end{example}
\noindent We close this section with the following result, which provides
a certain intuition on the net of open sets $\Omega_{\eps}\subseteq\R^{n}$
that generates the strongly internal set $\sint{\Omega_{\eps}}$.
We recall that a net $(B_{\eps})$ of subsets of $\R^{n}$ is called
\emph{sharply bounded} if there exists $N\in\R_{>0}$ such that 
\[
\forall^0\eps\,\forall a\in B_{\eps}:\ |a|\le\eps^{-N}.
\]

\begin{thm}
\noindent \label{prop:OmegaEpsAroundBoundedAndCptSupp}Let $(\Omega_{\eps})$
be a net of subsets in $\R^{n}$ for all $\eps$, and $(B_{\eps})$ a sharply
bounded net such that $[B_{\eps}]\subseteq\sint{\Omega_{\eps}}$,
then 
\[
\forall^0\eps:\ B_{\eps}\subseteq\Omega_{\eps}.
\]
\end{thm}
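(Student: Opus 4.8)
The plan is to argue by contradiction: from a hypothetical failure of the conclusion I would manufacture a single generalized point witnessing membership in $[B_\eps]$, hence in $\sint{\Omega_\eps}$, and then extract a contradiction from the fact that strong membership is stable under change of representatives. So suppose the conclusion $\forall^0\eps:\ B_\eps\subseteq\Omega_\eps$ fails. Then there exist $\eps_k\searrow 0$ and points $b_k\in B_{\eps_k}\setminus\Omega_{\eps_k}$ for each $k$. (We may assume $B_\eps\neq\emptyset$ for $\eps$ small; if $B_\eps=\emptyset$ for arbitrarily small $\eps$ then $[B_\eps]=\emptyset$ and there is nothing to prove.) It is worth emphasising already here \emph{why} one must work with the \emph{strongly} internal set $\sint{\Omega_\eps}$ and not with the ordinary internal set $[\Omega_\eps]$ of \eqref{eq:internalSetVMO}: for the latter the statement is plainly false, e.g.\ for $\Omega_\eps=\{0\}$ and $B_\eps=\{n_\eps\}$ with $(n_\eps)$ a negligible net of nonzero reals one has $[B_\eps]=\{0\}=[\Omega_\eps]$ while $B_\eps\not\subseteq\Omega_\eps$.

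Next I would assemble the net. Put $x_{\eps_k}:=b_k$ and, for every $\eps\notin\{\eps_k\mid k\in\N\}$, choose an arbitrary $x_\eps\in B_\eps$; then $x_\eps\in B_\eps$ for $\eps$ small. This is precisely where sharp boundedness of $(B_\eps)$ enters — and the only place: it yields $N\in\R_{>0}$ with $|x_\eps|\le\eps^{-N}$ for $\eps$ small, so $(x_\eps)\in\R_M^n$ and $x:=[x_\eps]\in\Rtil^n$ is a bona fide generalized point. By construction $x\in[B_\eps]$, so $x\in\sint{\Omega_\eps}$ by hypothesis; that is, $x=[y_\eps]$ for some net with $y_\eps\in_\eps\Omega_\eps$. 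Since $(x_\eps)\sim(y_\eps)$ and $\asymp_\tau$ is equality for the sharp topology (Example~\ref{exa:identifiedRelForSharpAndFermat}), clause~(ii) of Definition~\ref{def:strongMembership} applied to $(y_\eps)$ forces $x_\eps\in\Omega_\eps$ for $\eps$ small. In particular $x_{\eps_k}=b_k\in\Omega_{\eps_k}$ for all large $k$, contradicting $b_k\notin\Omega_{\eps_k}$ and $\eps_k\searrow 0$. This contradiction proves the claim.

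I expect the only genuinely delicate point to be the correct exploitation of strong membership in that last step. The subtlety is that $x\in\sint{\Omega_\eps}$ does \emph{not} merely say that \emph{some} representative of $x$ eventually lies in $\Omega_\eps$ — this weaker assertion is exactly $x\in[\Omega_\eps]$, which as noted above is insufficient — but, through clause~(ii) of Definition~\ref{def:strongMembership}, that \emph{every} representative does, including the net $(x_\eps)$ I tailored to the bad points $b_k$. An equivalent and perhaps more transparent route to the same conclusion is via Theorem~\ref{prop:strongMembershipAndDistanceComplement}\,(i): it gives $q\in\R_{>0}$ with $d(y_\eps,\Omega_\eps^c)>\eps^q$ for $\eps$ small, where $\Omega_\eps^c:=\R^n\setminus\Omega_\eps$; as $(x_\eps-y_\eps)$ is negligible, $|x_\eps-y_\eps|<\eps^q$ for $\eps$ small, and the triangle inequality for the distance to a set gives $d(x_\eps,\Omega_\eps^c)\ge d(y_\eps,\Omega_\eps^c)-|x_\eps-y_\eps|>0$, hence $x_\eps\notin\Omega_\eps^c$, for $\eps$ small — again contradicting the choice of the $b_k$.
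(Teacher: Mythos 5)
Your proof is correct and follows essentially the same route as the paper's: assume the conclusion fails, build a moderate net through the bad points $b_k\in B_{\eps_k}\setminus\Omega_{\eps_k}$ (using sharp boundedness for moderateness), and derive a contradiction with $[B_\eps]\subseteq\sint{\Omega_\eps}$ via clause (ii) of Definition~\ref{def:strongMembership}. Your added care about the empty case and the alternative argument via Theorem~\ref{prop:strongMembershipAndDistanceComplement} are fine but not needed beyond what the paper does.
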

\begin{proof}
\noindent By contradiction assume that we can find sequences $(\eps_{k})_{k}$
and $(x_{k})_{k}$ such that $\eps_{k}\downarrow0$ and $x_{k}\in B_{\eps_{k}}\setminus\Omega_{\eps_{k}}$.
Defining $x_{\eps}:=x_{k}$ for $\eps\in(\eps_{k+1},\eps_{k}]$, and $x_\eps\in B_\eps$ otherwise, we
have that $x:=[x_{\eps}]$ is moderate since $(B_{\eps})$ is sharply
bounded. Hence $x\in[B_{\eps}]$, but $x_{\eps_{k}}\notin\Omega_{\eps_{k}}$
by construction, hence $x\notin\sint{\Omega_{\eps}}$ by Def. \ref{def:strongMembership},
which is impossible because $[B_{\eps}]\subseteq\sint{\Omega_{\eps}}$.\end{proof}
\begin{example}
\noindent \label{exa:otilcNotInternal}Let $\Omega\subseteq\R^{n}$
be open and bounded. Then $\widetilde{\Omega}_{c}$ is not strongly
internal. Indeed, suppose that $\widetilde{\Omega}_{c}=\sint{\Omega_{\eps}}$.
Let $(K_{n})_{n}$ be a compact exhaustion of $\Omega$. Then by the
previous theorem, there exist $\eps_{n}$ such that $K_{n}\subseteq\Omega_{\eps}$,
for each $\eps\le\eps_{n}$. W.l.o.g., $(\eps_{n})_{n}$ decreasingly
tends to $0$ and $\eps_{n}\le d_{n}:=d(K_{n-1},\co K_{n})$. Choose
$x_{\eps}\in K_{n-1}\setminus K_{n-2}$ for each $\eps_{n+1}<\eps\le\eps_{n}$.
As $\Omega$ is bounded, $(x_{\eps})_{\eps}$ is moderate. Then $[x_{\eps}]\notin\widetilde{K_{n}}$,
for each $n\in\N$. On the other hand, $[x_{\eps}]\in\sint{\Omega_{\eps}}$,
since $d(x_{\eps},\co\Omega_{\eps})\ge\eps$. For, if $|y_{\eps}-x_{\eps}|\le\eps$
and $\eps_{n+1}<\eps\le\eps_{n}$, then $|y_{\eps}-x_{\eps}|\le d_{n}$
and $x_{\eps}\in K_{n-1}$. Hence $y_{\eps}\in K_{n}\subseteq\Omega_{\eps}$. 
\end{example}
\noindent The following theorem sheds some light on the relationship
between internal sets and strongly internal sets, implying e.g.\ that
they generate the same $\sigma$-algebra:
\begin{thm}
\noindent Let $A_{\eps}\subseteq\R^{n}$. Then we have:
\begin{enumerate}[leftmargin=*,label=(\roman*),align=left ]
\item \label{enu:1sigma}$\sint{A_{\eps}}=\bigcup_{m\in\N}[A_{m,\eps}]$,
where $A_{m,\eps}=\{x\in\R^{n}:d(x,A_{\eps}^{c})\ge\eps^{m}\}$. 
\item \noindent \label{enu:2sigma}$[A_{\eps}]=\bigcap_{m\in\N}\sint{A_{m,\eps}}$,
where $A_{m,\eps}=\{x\in\R^{n}:d(x,A_{\eps})<\eps^{m}\}$. 
\end{enumerate}
\end{thm}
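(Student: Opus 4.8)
The plan is to prove both identities by a direct comparison argument based on Theorem \ref{prop:strongMembershipAndDistanceComplement}, which characterizes the sharp-strong membership relation $x_\eps\in_\eps A_\eps$ via invertibility of $[d(x_\eps,A_\eps^c)]$, together with the elementary fact that a nonnegative moderate net $[c_\eps]\in\Rtil$ is invertible if and only if $c_\eps>\eps^m$ for $\eps$ small, for some $m\in\N$.

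For \ref{enu:1sigma}: first I would show $\sint{A_\eps}\subseteq\bigcup_{m}[A_{m,\eps}]$. If $x=[x_\eps]\in\sint{A_\eps}$, then by Theorem \ref{prop:strongMembershipAndDistanceComplement}\,(i) there is $q\in\R_{>0}$ with $d(x_\eps,A_\eps^c)>\eps^q$ for $\eps$ small; pick $m\in\N$ with $m\ge q$, so $d(x_\eps,A_\eps^c)\ge\eps^m$ for $\eps$ small, i.e. $x_\eps\in A_{m,\eps}$ for $\eps$ small, hence $x\in[A_{m,\eps}]$. Here one must check that the representative can be adjusted off a small interval of $\eps$'s so that $x_\eps\in A_{m,\eps}$ for \emph{all} $\eps$, exactly as in the definition of internal sets — this is the routine "for $\eps$ small" bookkeeping. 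Conversely, if $x\in[A_{m,\eps}]$ for some $m$, then (for a suitable representative) $d(x_\eps,A_\eps^c)\ge\eps^m$ for $\eps$ small, so $[d(x_\eps,A_\eps^c)]$ is invertible and $x_\eps\in_\eps A_\eps$ by Theorem \ref{prop:strongMembershipAndDistanceComplement}; thus $x\in\sint{A_\eps}$. The one subtlety is representative-independence: $A_{m,\eps}$ is a "thickened" set, so if $(x'_\eps)\sim(x_\eps)$ and $x_\eps\in A_{m,\eps}$ for $\eps$ small, then $d(x'_\eps,A_\eps^c)\ge d(x_\eps,A_\eps^c)-|x_\eps-x'_\eps|\ge\eps^m-\eps^{m+1}>0$ for $\eps$ small, so $x'_\eps\in A_\eps$; this is why $[A_{m,\eps}]$ is genuinely a subset of $\sint{A_\eps}$ and not just of $[A_\eps]$.

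For \ref{enu:2sigma}: now $A_{m,\eps}=\{x: d(x,A_\eps)<\eps^m\}$ is an open $\eps^m$-neighborhood of $A_\eps$. First, $[A_\eps]\subseteq\sint{A_{m,\eps}}$ for every $m$: if $x_\eps\in A_\eps$ for $\eps$ small, then the ball of radius $\tfrac12\eps^m$ around $x_\eps$ lies in $A_{m,\eps}$, so $d(x_\eps,A_{m,\eps}^c)\ge\tfrac12\eps^m$, which is invertible, giving $x_\eps\in_\eps A_{m,\eps}$ by Theorem \ref{prop:strongMembershipAndDistanceComplement}; one also checks that membership in $[A_\eps]$ is representative-independent in the relevant direction and that the chosen representative works for all $\eps$ after the usual adjustment. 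Conversely, suppose $x=[x_\eps]\in\bigcap_m\sint{A_{m,\eps}}$. Then for every $m$ we have $x_\eps\in A_{m,\eps}$ for $\eps$ small (for a representative possibly depending on $m$, but any representative differs negligibly), i.e. $d(x_\eps,A_\eps)<\eps^m$ for $\eps$ small. Applying this with $m$ replaced by $m+1$ and a diagonal/exhaustion choice of $\eps_0$'s, I would construct a net $(a_\eps)$ with $a_\eps\in A_\eps$ and $|x_\eps-a_\eps|\to 0$ faster than any power of $\eps$, i.e. $(x_\eps)\sim(a_\eps)$; then $x=[a_\eps]\in[A_\eps]$.

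The main obstacle — and the only nontrivial point — is the "$\subseteq$" direction of \ref{enu:2sigma}: from $x\in\bigcap_m\sint{A_{m,\eps}}$ one gets, for each $m$, an $\eps_m>0$ and the estimate $d(x_\eps,A_\eps)<\eps^m$ for $\eps\le\eps_m$, but these come with different thresholds $\eps_m$ and one must stitch them into a single net $(a_\eps)$ of points of $A_\eps$ that is negligibly close to $(x_\eps)$. The standard device is to take $\eps_m\downarrow 0$ without loss of generality and, for $\eps\in(\eps_{m+1},\eps_m]$, choose $a_\eps\in A_\eps$ with $|x_\eps-a_\eps|<\eps^m$ (possible since $d(x_\eps,A_\eps)<\eps^m$), and $a_\eps\in A_\eps$ arbitrary for $\eps>\eps_1$; then for $\eps$ in that range $|x_\eps-a_\eps|<\eps^m\le\eps^m$, and since on $(\eps_{m+1},\eps_m]$ we have, for any fixed $k$, eventually $m\ge k$, it follows that $|x_\eps-a_\eps|=O(\eps^k)$ for every $k$, i.e. $(a_\eps)\sim(x_\eps)$. (If some $A_\eps$ is empty the hypothesis $d(x_\eps,A_\eps)<\eps^m<\infty$ already fails, so $A_\eps\ne\emptyset$ for $\eps$ small and the choice is legitimate.) Everything else is a direct unwinding of Theorem \ref{prop:strongMembershipAndDistanceComplement} and the definition of internal and strongly internal sets.
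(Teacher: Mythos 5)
Your proof is correct and follows essentially the same route as the paper: part \ref{enu:1sigma} is precisely the paper's one-line application of Theorem \ref{prop:strongMembershipAndDistanceComplement}, and for part \ref{enu:2sigma} the paper simply invokes the characterization $[x_{\eps}]\in[A_{\eps}]\iff d(x_{\eps},A_{\eps})\sim0$ from \cite[Prop.~2.1]{Ob-Ve}, which your diagonal/stitching construction of the net $(a_{\eps})$ re-proves from scratch. No gaps; your version is just a more detailed unwinding of the same two distance characterizations.
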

\begin{proof}
\noindent \ref{enu:1sigma} Let $[x_{\eps}]\in\Rtil^{n}$. By Theorem
\ref{prop:strongMembershipAndDistanceComplement}, $[x_{\eps}]\in\sint{A_{\eps}}$
iff $d(x_{\eps},A_{\eps}^{c})\ge\eps^{m}$ for small $\eps$, for
some $m\in\N$.

\ref{enu:2sigma} Let $[x_{\eps}]\in\Rtil^{n}$. By \cite[Prop. 2.1]{Ob-Ve},
$[x_{\eps}]\in[A_{\eps}]$ iff $d(x_{\eps},A_{\eps})\sim0$ iff $d(x_{\eps},A_{\eps})<\eps^{m}$
for each $m\in\N$ (and for each representative $(x_{\eps})$).\end{proof}
\begin{thm}
Let $(A_{\eps})$ be sharply bounded. Then 
\[
\sint{A_{\eps}}\subseteq\sint{B_{\eps}}\iff\sup_{x\in B_{\eps}^{c}}d(x,A_{\eps}^{c})\sim0.
\]
\end{thm}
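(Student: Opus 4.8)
The plan is to reduce everything to the distance-to-complement description of strong membership furnished by Theorem~\ref{prop:strongMembershipAndDistanceComplement}: for a moderate net $(z_\eps)$ and any net $(C_\eps)$ of subsets of $\R^n$, one has $[z_\eps]\in\sint{C_\eps}$ iff $[d(z_\eps,C_\eps^c)]$ is invertible in $\Rtil$, i.e.\ iff $d(z_\eps,C_\eps^c)>\eps^p$ for $\eps$ small and some $p\in\R_{>0}$. Write $\delta_\eps:=\sup_{x\in B_\eps^c}d(x,A_\eps^c)$, with the convention $\sup\emptyset:=0$; since $(A_\eps)$ is sharply bounded we have $A_\eps\subsetneq\R^n$, hence $A_\eps^c\neq\emptyset$, for $\eps$ small, so these distances are finite.

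For the implication ``$\Leftarrow$'' I would take $[x_\eps]\in\sint{A_\eps}$, say $d(x_\eps,A_\eps^c)>\eps^p$ for $\eps$ small, fix such an $\eps$ (the case $B_\eps^c=\emptyset$ being trivial), and for $y\in B_\eps^c$ use $(\delta_\eps)\sim 0$ to get $d(y,A_\eps^c)\le\delta_\eps<\frac14\eps^p$; picking $a\in A_\eps^c$ with $|y-a|<\frac12\eps^p$ and applying the triangle inequality gives $|x_\eps-y|>\frac12\eps^p$. Taking the infimum over $y\in B_\eps^c$ yields $d(x_\eps,B_\eps^c)\ge\frac12\eps^p$ for $\eps$ small, hence $[x_\eps]\in\sint{B_\eps}$ by Theorem~\ref{prop:strongMembershipAndDistanceComplement}. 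This direction is routine.

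For the converse I would argue by contraposition and exhibit a point of $\sint{A_\eps}\setminus\sint{B_\eps}$, working in the non-degenerate case $\sint{A_\eps}\neq\emptyset$ and fixing $[a_\eps]\in\sint{A_\eps}$ with $d(a_\eps,A_\eps^c)>\eps^p$ for $\eps$ small. From $(\delta_\eps)\not\sim 0$ one extracts $q\in\R_{>0}$ and a sequence $\eps_k\downarrow 0$ with $\delta_{\eps_k}>\eps_k^q$, hence points $y_k\in B_{\eps_k}^c$ with $d(y_k,A_{\eps_k}^c)>\eps_k^q$. The two facts that drive the argument are: first, $d(y_k,A_{\eps_k}^c)>0$ forces $y_k\in A_{\eps_k}$, so by sharp boundedness of $(A_\eps)$ we get $|y_k|\le\eps_k^{-N}$ for $k$ large; second, setting $x_\eps:=y_k$ for $\eps=\eps_k$ ($k$ large) and $x_\eps:=a_\eps$ otherwise yields a moderate net with $d(x_\eps,A_\eps^c)>\eps^{\max(p,q)}$ for $\eps$ small, so that $[x_\eps]\in\sint{A_\eps}$, while $x_{\eps_k}\in B_{\eps_k}^c$ forces $d(x_{\eps_k},B_{\eps_k}^c)=0$ for infinitely many $k$, so $[d(x_\eps,B_\eps^c)]$ is non-invertible and $[x_\eps]\notin\sint{B_\eps}$ by Theorem~\ref{prop:strongMembershipAndDistanceComplement}.

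The only genuinely delicate point is this last construction: one must produce a \emph{single} moderate net that stays uniformly deep inside $A_\eps$ for all small $\eps$ while still meeting $B_\eps^c$ along the subsequence $(\eps_k)$. Moderateness along $(\eps_k)$ is exactly where sharp boundedness of $(A_\eps)$ is used (via $y_k\in A_{\eps_k}$), while uniform depth off the subsequence is obtained simply by interpolating with the fixed deep point $[a_\eps]$; everything else reduces to the distance reformulation of Theorem~\ref{prop:strongMembershipAndDistanceComplement} together with elementary estimates.
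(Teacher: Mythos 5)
Your proof is correct, and in the forward direction it takes a genuinely different route from the paper's. For the implication ``$\Leftarrow$'' the content is the same: the paper argues contrapositively (a representative exiting $B_{\eps}$ along a subsequence is perturbed by the negligible amount $\sup_{x\in B_{\eps}^{c}}d(x,A_{\eps}^{c})$ into $A_{\eps}^{c}$, destroying strong membership in $\sint{A_{\eps}}$), whereas you run the same triangle inequality forwards to show that every point of $\sint{A_{\eps}}$ keeps distance at least $\tfrac{1}{2}\eps^{p}$ from $B_{\eps}^{c}$; the two packagings are interchangeable. The real divergence is in ``$\Rightarrow$'': the paper deduces it immediately from the two preceding structural results, namely the decomposition $\sint{A_{\eps}}=\bigcup_{m}[A_{m,\eps}]$ with $A_{m,\eps}=\{x:d(x,A_{\eps}^{c})\ge\eps^{m}\}$ together with Theorem \ref{prop:OmegaEpsAroundBoundedAndCptSupp}, which yield the $\eps$-wise inclusions $A_{m,\eps}\subseteq B_{\eps}$, i.e.\ $B_{\eps}^{c}\subseteq A_{m,\eps}^{c}$, for small $\eps$ and every $m$ --- which is exactly $\sup_{x\in B_{\eps}^{c}}d(x,A_{\eps}^{c})\sim0$. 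You instead re-prove this from scratch by contraposition, interleaving the bad points $y_{k}\in B_{\eps_{k}}^{c}\cap A_{\eps_{k}}$ with a fixed deep point $[a_{\eps}]$ of $\sint{A_{\eps}}$; this is in effect the same diagonal construction that underlies Theorem \ref{prop:OmegaEpsAroundBoundedAndCptSupp}, and anchoring it at $[a_{\eps}]$ off the subsequence is a clean way to guarantee $[x_{\eps}]\in\sint{A_{\eps}}$. The paper's route is shorter given the earlier machinery and produces the slightly stronger $\eps$-wise statement; yours is self-contained and makes explicit that sharp boundedness of $(A_{\eps})$ enters only through the moderateness of the interleaved net. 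One shared caveat: your contraposition requires $\sint{A_{\eps}}\ne\emptyset$, which you flag; the paper's appeal to Theorem \ref{prop:OmegaEpsAroundBoundedAndCptSupp} carries an analogous implicit non-degeneracy assumption (its proof needs the relevant sets to be eventually nonempty), so this is not a gap specific to your argument.
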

\begin{proof}
$\Rightarrow$: by the previous theorems, we have $[A_{m,\eps}]\subseteq\sint{B_{\eps}}$,
and thus $A_{m,\eps}\subseteq B_{\eps}$, i.e., $B_{\eps}^{c}\subseteq A_{m,\eps}^{c}$
for small $\eps$, for each $m$, where $A_{m,\eps}=\{x\in\R^{n}:d(x,A_{\eps}^{c})\ge\eps^{m}\}$.

\noindent $\Leftarrow$: if $x\notin\sint{B_{\eps}}$, then there
exists a representative $(x_{\eps})$ and $\eps_{n}\to0$ such that
$x_{\eps_{n}}\in B_{\eps_{n}}^{c}$ for each $n$. It is given that
there exist $x'_{\eps_{n}}\in A_{\eps_{n}}^{c}$ such that $|x_{\eps_{n}}-x'_{\eps_{n}}|\le\nu_{\eps_{n}}$
with $\nu_{\eps}\sim0$. Let $x'_{\eps}:=x_{\eps}$ if not yet defined.
Then $x=[x'_{\eps}]\notin\sint{A_{\eps}}$.\end{proof}
\begin{cor}
\noindent If $(A_{\eps})$ and $(B_{\eps})$ are sharply bounded nets,
then $\sint{A_{\eps}}=\sint{B_{\eps}}$ if and only if the Hausdorff
distance $d_{H}(A_{\eps}^{c},B_{\eps}^{c})\sim0$.\end{cor}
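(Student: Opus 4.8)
The plan is to combine the two preceding theorems to reduce the Corollary to a statement about symmetric differences. By the previous theorem applied in both directions, $\sint{A_{\eps}}=\sint{B_{\eps}}$ holds if and only if both $\sup_{x\in B_{\eps}^{c}}d(x,A_{\eps}^{c})\sim 0$ and $\sup_{x\in A_{\eps}^{c}}d(x,B_{\eps}^{c})\sim 0$. So the entire task is to show that the conjunction of these two one-sided suprema being negligible is equivalent to $d_{H}(A_{\eps}^{c},B_{\eps}^{c})\sim 0$, where the Hausdorff distance between two nonempty sets $C,D\subseteq\R^{n}$ is $d_{H}(C,D)=\max\bigl(\sup_{x\in C}d(x,D),\sup_{y\in D}d(x,C)\bigr)$.

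The key steps, in order, are: (1) unwind the hypothesis of sharp boundedness to note that we may reduce to the generic case; in the degenerate situation where $A_{\eps}^{c}$ or $B_{\eps}^{c}$ is empty (i.e.\ $A_{\eps}=\R^{n}$ or $B_{\eps}=\R^{n}$) one checks directly that $\sint{A_{\eps}}=\Rtil^{n}$ forces, via Theorem \ref{prop:OmegaEpsAroundBoundedAndCptSupp}-type reasoning together with sharp boundedness of the other net, that the complements agree for small $\eps$, matching the convention $d_{H}(\emptyset,\emptyset)=0$. (2) In the main case, recall the elementary identity for the Hausdorff distance of complements: $d_{H}(A_{\eps}^{c},B_{\eps}^{c})=\max\bigl(\sup_{x\in A_{\eps}^{c}}d(x,B_{\eps}^{c}),\ \sup_{x\in B_{\eps}^{c}}d(x,A_{\eps}^{c})\bigr)$. (3) Observe that for a net of nonnegative reals, the maximum of two such nets is $\sim 0$ if and only if each of them is $\sim 0$ — this is immediate from the definition of negligibility, since $0\le u_{\eps}\le\max(u_{\eps},v_{\eps})$ and $\max(u_{\eps},v_{\eps})\le u_{\eps}+v_{\eps}$. (4) Chain these equivalences: $d_{H}(A_{\eps}^{c},B_{\eps}^{c})\sim 0$ $\iff$ both one-sided suprema are $\sim 0$ $\iff$ (by the previous theorem, used symmetrically) $\sint{A_{\eps}}\subseteq\sint{B_{\eps}}$ and $\sint{B_{\eps}}\subseteq\sint{A_{\eps}}$ $\iff$ $\sint{A_{\eps}}=\sint{B_{\eps}}$.

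The step requiring the most care is (1), the bookkeeping around empty complements and the rôle of the sharp-boundedness hypothesis. The previous theorem is stated for $(A_{\eps})$ sharply bounded; to apply it in the reversed direction I need $(B_{\eps})$ sharply bounded as well, which is part of the Corollary's hypotheses, so this is available. But I should check that the previous theorem's proof (and hence its statement) does not tacitly assume $A_{\eps}^{c}\neq\emptyset$; if it does, I handle $A_{\eps}=\R^{n}$ separately: then $\sint{A_{\eps}}=\Rtil^{n}\supseteq\sint{B_{\eps}}$ automatically, and $\sint{A_{\eps}}=\sint{B_{\eps}}$ forces $\Rtil^{n}=\sint{B_{\eps}}$, whence by the sharp-boundedness argument (Theorem \ref{prop:OmegaEpsAroundBoundedAndCptSupp}) every $x_{\eps}$ with $|x_{\eps}|\le\eps^{-N}$ lies in $B_{\eps}$ for small $\eps$, and letting the bound grow one gets $B_{\eps}=\R^{n}$ for small $\eps$, so $d_{H}(A_{\eps}^{c},B_{\eps}^{c})=d_{H}(\emptyset,\emptyset)=0$; the converse is trivial. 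Everything else is a routine assembly of the two theorems just proved plus the trivial fact about negligibility of maxima, so I do not expect any genuine obstacle beyond this case analysis.

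\begin{proof}
Write $\sigma_{AB}:=\sup_{x\in B_{\eps}^{c}}d(x,A_{\eps}^{c})$ and $\sigma_{BA}:=\sup_{x\in A_{\eps}^{c}}d(x,B_{\eps}^{c})$, with the convention that a supremum over the empty set is $0$. By the previous theorem (applicable to both $(A_{\eps})$ and $(B_{\eps})$ since both are sharply bounded), $\sint{A_{\eps}}\subseteq\sint{B_{\eps}}$ iff $\sigma_{AB}\sim 0$, and $\sint{B_{\eps}}\subseteq\sint{A_{\eps}}$ iff $\sigma_{BA}\sim 0$; hence $\sint{A_{\eps}}=\sint{B_{\eps}}$ iff $\sigma_{AB}\sim 0$ and $\sigma_{BA}\sim 0$. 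For nets of nonnegative reals $(u_{\eps}),(v_{\eps})$ one has $\max(u_{\eps},v_{\eps})\sim 0$ iff $u_{\eps}\sim 0$ and $v_{\eps}\sim 0$, since $0\le u_{\eps},v_{\eps}\le\max(u_{\eps},v_{\eps})\le u_{\eps}+v_{\eps}$. Applying this with $u_{\eps}=\sigma_{AB}$, $v_{\eps}=\sigma_{BA}$ and recalling that $d_{H}(A_{\eps}^{c},B_{\eps}^{c})=\max(\sigma_{AB},\sigma_{BA})$ when both complements are nonempty, we conclude $\sint{A_{\eps}}=\sint{B_{\eps}}$ iff $d_{H}(A_{\eps}^{c},B_{\eps}^{c})\sim 0$ in this case.

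It remains to treat the degenerate case in which one complement is empty for a cofinal set of $\eps$, say $A_{\eps}=\R^{n}$. Then $\sint{A_{\eps}}=\Rtil^{n}$, so $\sint{A_{\eps}}=\sint{B_{\eps}}$ forces $\sint{B_{\eps}}=\Rtil^{n}$. Since $(B_{\eps})$ is sharply bounded, Theorem \ref{prop:OmegaEpsAroundBoundedAndCptSupp} applied to the nets $B_{\eps}^{(N)}:=\{x\in\R^{n}\mid|x|\le\eps^{-N}\}$ (each of which is sharply bounded and satisfies $[B_{\eps}^{(N)}]\subseteq\Rtil^{n}=\sint{B_{\eps}}$) yields, for every $N$, that $B_{\eps}^{(N)}\subseteq B_{\eps}$ for $\eps$ small; hence $B_{\eps}=\R^{n}$ for $\eps$ small and $d_{H}(A_{\eps}^{c},B_{\eps}^{c})=d_{H}(\emptyset,\emptyset)=0\sim 0$. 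Conversely, $d_{H}(A_{\eps}^{c},B_{\eps}^{c})\sim 0$ together with $A_{\eps}^{c}=\emptyset$ forces $B_{\eps}^{c}=\emptyset$ for $\eps$ small, so $\sint{A_{\eps}}=\Rtil^{n}=\sint{B_{\eps}}$. The case $B_{\eps}=\R^{n}$ is symmetric. This exhausts all cases.
\end{proof}
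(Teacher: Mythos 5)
Your argument is correct and is exactly the intended one --- the paper states this corollary without proof, since it follows immediately from applying the preceding theorem with the roles of $(A_{\eps})$ and $(B_{\eps})$ interchanged and observing that a maximum of two nonnegative nets is negligible iff both are. The degenerate case you treat separately is in fact vacuous: sharp boundedness of $(A_{\eps})$ and $(B_{\eps})$ forces $A_{\eps}^{c}\neq\emptyset$ and $B_{\eps}^{c}\neq\emptyset$ for small $\eps$, so the main case already covers everything (which is just as well, because your conclusion there that $B_{\eps}=\R^{n}$ for small $\eps$ involves a diagonal extraction over $N$ that would need more care if the case could actually occur).
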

\begin{defn}
\noindent $A\subseteq\Rtil^{n}$ is convex if for each $x,y\in A$
and $t\in\widetilde{[0,1]}$, $tx+(1-t)y\in A$.\end{defn}
\begin{lem}
\noindent \label{lem:convex} If $A\subseteq\Rtil^{n}$ is convex,
internal and sharply bounded, then $A$ has a representative consisting
of convex sets.\end{lem}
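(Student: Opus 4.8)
\emph{Plan.} The plan is to take as representative the net of convex hulls $C_\eps:=\conv(A_\eps)$. First I would use internality of $A$ to fix a net with $A=[A_\eps]$, and then, since $A$ is sharply bounded, replace $A_\eps$ by $A_\eps\cap\{a\in\R^n:|a|\le\eps^{-N}\}$ for a suitable $N\in\R_{>0}$, so that without loss of generality $A_\eps$ is contained in the Euclidean ball of radius $\eps^{-N}$ for $\eps$ small; this does not change $[A_\eps]$ precisely because $A$ is sharply bounded. Each $C_\eps$ is convex and $A=[A_\eps]\subseteq[C_\eps]$ is immediate from $A_\eps\subseteq C_\eps$, so the whole matter reduces to proving $[C_\eps]\subseteq A$.

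For this I would isolate the following \emph{finite convexity} statement: for every $k\in\N$, all $a_0,\dots,a_k\in A$ and all $\lambda_0,\dots,\lambda_k\in\Rtil$ with $\lambda_i\ge0$ and $\sum_{i=0}^k\lambda_i=1$, one has $\sum_{i=0}^k\lambda_i a_i\in A$. Granting this, the inclusion $[C_\eps]\subseteq A$ follows from Carath\'eodory's theorem: if $x=[x_\eps]\in[C_\eps]$, then for $\eps$ small $x_\eps=\sum_{i=0}^n\lambda_{i,\eps}a_{i,\eps}$ with $a_{i,\eps}\in A_\eps$, $\lambda_{i,\eps}\in[0,1]_\R$ and $\sum_i\lambda_{i,\eps}=1$ (choosing the data arbitrarily for the remaining $\eps$). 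As $A_\eps$ is bounded by $\eps^{-N}$, each $(a_{i,\eps})$ is moderate, hence $a_i:=[a_{i,\eps}]\in[A_\eps]=A$; moreover $\lambda_i:=[\lambda_{i,\eps}]\ge0$, $\sum_i\lambda_i=1$ and $x=\sum_i\lambda_i a_i$, so $x\in A$ by finite convexity.

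It then remains to prove finite convexity, and this is where the only genuine difficulty lies. I would induct on $k$, the cases $k\le1$ being exactly the definition of convexity of $A$. In the inductive step the naive move — writing $\sum_{i=0}^{k+1}\lambda_i a_i=\lambda_0 a_0+(1-\lambda_0)\sum_{i\ge1}\tfrac{\lambda_i}{1-\lambda_0}a_i$ and peeling off a term — breaks down because $1-\lambda_0$ need not be invertible in $\Rtil$ (for instance $\lambda_0$ could be an idempotent $e_S$); this non-invertibility is \emph{the main obstacle}. To circumvent it I would first pick representatives with $\lambda_{i,\eps}\ge0$ and $\sum_i\lambda_{i,\eps}=1$ for $\eps$ small, and then partition the index set into $S_0,\dots,S_{k+1}$, where $S_j$ consists of those $\eps$ for which $\lambda_{j,\eps}=\max_i\lambda_{i,\eps}$ with $j$ minimal. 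On $S_j$ one has $\lambda_{j,\eps}\ge\tfrac1{k+2}$, hence $0\le 1-\lambda_{j,\eps}\le\tfrac{k+1}{k+2}<1$, so the net $z^{(j)}_\eps:=\tfrac{1}{1-\lambda_{j,\eps}}\sum_{i\ne j}\lambda_{i,\eps}a_{i,\eps}$ (defined where $\lambda_{j,\eps}<1$, and set equal to a fixed $a_{i_0,\eps}$ with $i_0\ne j$ on the rest of $I$) has coefficients in $[0,\,k+2]$, is moderate, and is globally a convex combination of the $k+1$ points $\{a_i:i\ne j\}$; by the induction hypothesis $z^{(j)}:=[z^{(j)}_\eps]\in A$, and hence $w^{(j)}:=\lambda_j a_j+(1-\lambda_j)z^{(j)}\in A$ by two-point convexity, since $0\le\lambda_j\le1$, i.e.\ $\lambda_j\in\widetilde{[0,1]}$. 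A direct check shows $x_\eps=w^{(j)}_\eps$ for every $\eps\in S_j$ (in both the cases $\lambda_{j,\eps}<1$ and $\lambda_{j,\eps}=1$), so $x$ is obtained by gluing the finitely many points $w^{(0)},\dots,w^{(k+1)}\in A=[A_\eps]$ along the partition $\{S_j\}$. Choosing for each $j$ a representative of $w^{(j)}$ lying in $A_\eps$ for $\eps$ small and gluing these along $\{S_j\}$ produces a moderate net (again using sharp boundedness of $A$) that lies in $A_\eps$ for $\eps$ small and represents $x$; therefore $x\in[A_\eps]=A$, which closes the induction and the proof. To summarise where the hypotheses are used: internality gives $A=[A_\eps]$ and makes the gluing land back in $A$, while sharp boundedness guarantees that the Carath\'eodory data and the glued net are moderate.
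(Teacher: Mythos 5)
Your argument is correct and follows the same main line as the paper's proof: fix a sharply bounded representative net $(A_\eps)$, show $A=[\conv(A_\eps)]$, and for $x\in[\conv(A_\eps)]$ use Carath\'eodory's theorem to write $x=\sum_{j=0}^{n}\lambda_j a_j$ with $a_j=[a_{j,\eps}]\in A$ (moderate by sharp boundedness) and $\lambda_j=[\lambda_{j,\eps}]\in\widetilde{[0,1]}$, $\sum_j\lambda_j=1$. Where you go beyond the paper is the final step: the paper concludes $\sum_j\lambda_j a_j\in A$ directly ``since $A$ is convex'', even though its definition of convexity is only the two-point one, and the usual reduction of a $(k+2)$-point combination to a $(k+1)$-point one via $\sum_i\lambda_i a_i=\lambda_0 a_0+(1-\lambda_0)\sum_{i\ge1}\frac{\lambda_i}{1-\lambda_0}a_i$ is not available in $\Rtil$, since $1-\lambda_0$ need not be invertible (e.g.\ $\lambda_0=e_S$). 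You correctly isolate this as the real content and prove the finite-convexity statement by performing the division on representatives $\eps$-wise: the resulting coefficients lie in $[0,1]_\R$ and sum to $1$, so the inner combination is a legitimate $(k+1)$-point convex combination to which the induction hypothesis applies, and the induction closes. (Your case distinction over the sets $S_j$ where $\lambda_{j,\eps}$ is maximal is harmless but not needed: peeling off the index $j=0$ for every $\eps$, with $z^{(0)}_\eps:=a_{1,\eps}$ when $\lambda_{0,\eps}=1$, already gives $x=w^{(0)}$ identically, so no interleaving along a partition of $I$ is required.) In short, your proof supplies a justification for a step the paper's proof takes for granted; the rest is identical.
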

\begin{proof}
\noindent By \cite{Ob-Ve}, $A=[A_{\eps}]$ for some sharply bounded
net $(A_{\eps})_{\eps}$. We show that $A=[\conv(A_{\eps})]$, where
$\conv(X)$ denotes the convex closure of $X$. Let $x\in[\conv(A_{\eps})]$.
Then $x_{\eps}\in\conv(A_{\eps})$ for sufficiently small $\eps$
and for some representative $(x_{\eps})$ of $x$. By Carath{\'e}odory's
theorem in convex geometry, there exist $a_{0,\eps}$, \dots, $a_{n,\eps}\in A_{\eps}$
such that $x_{\eps}\in\conv\{a_{0,\eps},\dots,a_{n,\eps}\}$, i.e.,
there exist $\lambda_{0,\eps},\dots,\lambda_{n,\eps}\in[0,1]_{\R}$
such that $x_{\eps}=\sum_{j=0}^{n}\lambda_{j,\eps}a_{j,\eps}$. Since
$(A_{\eps})_{\eps}$ is sharply bounded, $[a_{j,\eps}]=a_{j}$ for
some $a_{j}\in A\subseteq\Rtil^{n}$. Hence $x=\sum_{j=0}^{n}\lambda_{j}a_{j}\in A$
(with $\lambda_{j}=[\lambda_{j,\eps}]\in\widetilde{[0,1]}$ and $\sum_{j=0}^{n}\lambda_{j}=1$),
since $A\subseteq\Rtil^{n}$ is assumed to be convex.
\end{proof}

\section{\label{sec:locLipFun}Locally Lipschitz functions}
\begin{defn}
\label{def:locLip-1}Let $U\subseteq\Rtil^{n}$. Then $f:U\to\Rtil^{m}$
is called \emph{Lipschitz} if there exists some $L\in\Rtil$ such
that $|f(x)-f(y)|\le L|x-y|$ for all $x$, $y\in U$, where $|\ |$
is the natural extension of the Euclidean norm to generalized points,
i.e.\ $|[x_{\eps}]|:=[|x_{\eps}|]$. The function $f$ is called
\emph{locally Lipschitz} with respect to some topology $\tau$ on
$U$ if every $x\in U$ possesses a $\tau$-neighborhood in which
$f$ is Lipschitz in this sense.
\end{defn}
It is immediate from the definition that a map $f:U\to\Rtil^{m}$
is Lipschitz if and only if $\exists N\in\N:\ |f(x)-f(y)|\le[\eps^{-N}]|x-y|$
for all $x$, $y\in U$. Moreover, any locally Lipschitz function
in the sharp topology (in particular any locally Lipschitz function
in the Fermat topology) is also continuous in the sharp topology.
\begin{example}
\label{exa:u-temperedIsLipOnLargeBalls} As we will see in the next
section, any map $u$: $\widetilde{\Omega}_{c}\freccia\Rtil$ generated
by a CGF or any map $u$: ${\Rtil^{n}}\freccia\Rtil$ generated by
a tempered generalized function is locally Lipschitz for the Fermat
topology.
\end{example}
The following result shows, in particular, that the composition of
locally Lipschitz maps in the sharp topology is again locally Lipschitz,
and gives sufficient conditions for the corresponding property in
the Fermat topology.
\begin{lem}
\label{lem:propLip}Let $A\subseteq\Rtil^{a}$, $B\subseteq\Rtil^{b}$,
$C\subseteq\Rtil^{c}$, $D\subseteq\Rtil^{d}$ and $f:A\freccia B$
and $g:C\freccia D$ be locally Lipschitz maps in the topology $\tau$.
Then, if $\tau$ is the Fermat topology, we have 
\begin{enumerate}[leftmargin=*,label=(\roman*),align=left ]
\item \label{enu:locLipLargeImpliesContLargeNoInf}If $f$ is locally Lipschitz
with respect to \emph{finite} Lipschitz constants, then it is also
continuous in the Fermat topology. 
\item \label{enu:internalCont}If $B=C$ and $f$ is continuous in the Fermat
topology, then $g\circ f$ is locally Lipschitz. 
\item \label{enu:externalGlobLip}If $B=C$ and $g$ is Lipschitz, then
$g\circ f$ is locally Lipschitz. 
\end{enumerate}

\noindent Whereas if $\tau$ is the sharp topology and $B=C$, then
$g\circ f$ is locally Lipschitz in the same topology.

\end{lem}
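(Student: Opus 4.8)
The plan is to prove each statement in turn, in each case reducing to the definition of local Lipschitz-continuity and the triangle-type inequality for the natural norm on $\Rtil^m$.

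For \ref{enu:locLipLargeImpliesContLargeNoInf}, suppose $f:A\freccia B$ is locally Lipschitz in the Fermat topology with finite Lipschitz constants, and fix $x_0\in A$. Then there is a Fermat-neighborhood $V$ of $x_0$ in $A$ and a finite $L\in\Rtil$ (i.e.\ $|L|\le C$ for some $C\in\R_{>0}$) with $|f(x)-f(y)|\le L|x-y|$ for all $x,y\in V$. Given a Fermat-neighborhood $W$ of $f(x_0)$, it contains $B_r(f(x_0))\cap B$ for some $r\in\R_{>0}$; then for $x\in V$ with $|x-x_0|<r/(2C)$ (again a Fermat-neighborhood of $x_0$, since the radius is a standard positive real) we get $|f(x)-f(x_0)|\le L|x-x_0|\le C\cdot r/(2C)<r$, so $f(x)\in W$. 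Hence $f$ is continuous at $x_0$ in the Fermat topology. (Note that finiteness of $L$ is essential: an infinite $L$ times a standard-radius ball need not land inside a standard-radius ball.)

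For \ref{enu:internalCont}, assume $B=C$ and $f$ is Fermat-continuous; fix $x_0\in A$. Since $g$ is locally Lipschitz at $f(x_0)$, pick a Fermat-neighborhood $W$ of $f(x_0)$ in $C$ and $M\in\Rtil$ with $|g(u)-g(v)|\le M|u-v|$ for $u,v\in W$. Since $f$ is Fermat-continuous at $x_0$, there is a Fermat-neighborhood $V$ of $x_0$ in $A$ with $f(V)\subseteq W$; shrinking $V$, we may also assume $f$ is Lipschitz on $V$ with some constant $L\in\Rtil$. Then for $x,y\in V$ we have $f(x),f(y)\in W$, so $|g(f(x))-g(f(y))|\le M|f(x)-f(y)|\le ML|x-y|$, i.e.\ $g\circ f$ is Lipschitz on $V$ with constant $ML\in\Rtil$. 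For \ref{enu:externalGlobLip}, with $B=C$ and $g$ globally Lipschitz with constant $M\in\Rtil$, the argument is simpler: at $x_0\in A$ take a Fermat-neighborhood $V$ on which $f$ is Lipschitz with constant $L$, and then $|g(f(x))-g(f(y))|\le M|f(x)-f(y)|\le ML|x-y|$ for all $x,y\in V$, with no continuity of $f$ needed since $g$ is Lipschitz on all of $C=B\supseteq f(V)$.

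Finally, for the sharp topology with $B=C$: here there is nothing analogous to the finiteness subtlety, because the sharp neighborhood system is generated by \emph{all} positive invertible radii, and multiplication by any $L\in\Rtil^*$ permutes such radii. Concretely, fix $x_0\in A$, choose a sharp neighborhood $W$ of $f(x_0)$ in $B$ and $M\in\Rtil$ with $g$ $M$-Lipschitz on $W$, and a sharp neighborhood $V'$ of $x_0$ with $f$ $L$-Lipschitz on $V'$. The only point to check is that we may shrink $V'$ to $V\subseteq V'$ so that $f(V)\subseteq W$; this is exactly continuity of $f$ in the sharp topology, which holds by the remark after Definition \ref{def:locLip-1} (every sharply locally Lipschitz map is sharply continuous). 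Then for $x,y\in V$, $|g(f(x))-g(f(y))|\le M|f(x)-f(y)|\le ML|x-y|$, so $g\circ f$ is Lipschitz on the sharp neighborhood $V$ with constant $ML\in\Rtil$. The main (indeed only) obstacle is keeping track of where "finite" versus "arbitrary" Lipschitz constants are needed: in the Fermat case composition forces us either to assume $f$ continuous (via finiteness in \ref{enu:internalCont}) or to have $g$ globally Lipschitz (\ref{enu:externalGlobLip}), whereas in the sharp case automatic continuity makes everything go through unconditionally.
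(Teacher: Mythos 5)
Your proof is correct and follows essentially the same route as the paper's, which simply observes that part \ref{enu:locLipLargeImpliesContLargeNoInf} reduces to an $\eps$--$\delta$ characterization of Fermat continuity and that the remaining parts are ``formally equal to the standard ones in metric spaces''; you have merely written out those standard arguments in full, correctly isolating the one non-classical point (a finite constant times a standard radius is again a standard radius, while an arbitrary $L\in\Rtil$ times an invertible radius is handled by the sharp topology's larger set of radii).
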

\begin{proof}
\ref{enu:locLipLargeImpliesContLargeNoInf}: one easily sees that
$f$ is continuous for the Fermat topology iff 
\[
\forall x\in A\,\forall\eps\in\R_{>0}\,\exists\delta\in\R_{>0}\,\forall y\in A:\ (|x-y|\le\delta\implies|f(x)-f(y)|\le\eps),
\]
 which is clearly satisfied if $f$ satisfies the given conditions
of \ref{enu:locLipLargeImpliesContLargeNoInf}.

The proofs of the other parts are formally equal to the standard ones
in metric spaces.\end{proof}
\begin{rem}
\label{rem:largeOpenSets} We emphasize that our notion of Lipschitz
map differs from the classical definition in a metric space, e.g.\ with
respect to the sharp metrics on $\Rtil^{n}$, $\Rtil^{m}$, because
both the Lipschitz constant $L$ and the generalized metric $|x-y|$
assume values in $\Rtil$. On the other hand, it is the natural generalization
of the classical notion to the non-Archimedean ring $\Rtil$. In fact,
if $U\subseteq\R^{n}$ and $f:U\longrightarrow\R^{m}$ is Lipschitz
in the usual sense, then it is also Lipschitz in the sense of Def.\ \ref{def:locLip-1}.
Moreover, if this $f:U\longrightarrow\R^{m}$ is locally Lipschitz
in the usual sense with respect to the Euclidean topology, then viewing
$f$ as a CGF (i.e.\ through the embedding $\mathcal{C}^{0}(U,\R^{m})\subseteq\mathcal{D}'(U)^{m}\subseteq\gs(U)^{m}$),
it is easy to prove that the induced map $f:\widetilde{U}_{c}\longrightarrow\Rtil^{m}$
(which extends the original $f$) is locally Lipschitz with respect
to the Fermat topology with finite Lipschitz constant.
\end{rem}
While clearly on $\R$ (as in any metric space) from a local Lipschitz
condition it is possible to obtain a global one on compact sets, this
is not directly translatable into $\Rtil$ with the above concept
of locally Lipschitz maps. In fact, this property already fails on
finite sets. E.g., let $U=\{0,e_{S}\}\subseteq\Rtil$, where $e_{S}\ne0$ is a zero divisor,
and let $f(0):=0$ and $f(e_{S}):=1$. Then $f$ is locally Lipschitz
for the Fermat topology, but not globally Lipschitz, since $1=|f(e_{S})-f(0)|\le C|e_{S}-0|=Ce_{S}$
does not hold for any $C\in\Rtil$. 
We still have the following: 
\begin{defn}
Let $U\subseteq\Rtil^{n}$. We call $f$: $U\to\Rtil^{m}$ \emph{pointwise
Lipschitz} if for each $x,y\in U$, there exists some $C\in\Rtil$
such that $|f(y)-f(x)|\le C|y-x|$.\\
 We call $f$ \emph{strongly locally Lipschitz} w.r.t.\ the topology
$\tau$ if every $x,y\in U$ possess $\tau$-neighbourhoods $V_{x}$
and $V_{y}$ respectively such that $f$ is Lipschitz on $V_{x}\cup V_{y}$.\end{defn}
\begin{thm}
\label{thm:cptLip}Let $\tau$ be either the sharp topology or the
Fermat topology. Let $K\subseteq\Rtil^{n}$ be $\tau$-compact. 
\begin{enumerate}[leftmargin=*,label=(\roman*),align=left ]
\item \label{enu:stronglyLocLipToGlobLip} If $f$: $K\to\Rtil^{m}$ is
$\tau$-strongly locally Lipschitz on $K$, then $f$ is globally
Lipschitz on $K$. 
\item \label{enu:locLipToGlobLip} Let $f$: $K\to\Rtil^{m}$ be a $\tau$-locally
Lipschitz and pointwise Lipschitz map. Let for each $x,y\in K$ with
$x\not\asymp_{\tau}y$ necessarily $|x-y|\ge[\eps^{m}]$ for some
$m\in\N$ (if $\tau$ is the sharp topology), resp.\ $|x-y|\ge r$
for some $r\in\R_{>0}$ (if $\tau$ is the Fermat topology). Then
$f$ is globally Lipschitz on $K$.
\end{enumerate}
\end{thm}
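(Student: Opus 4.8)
The plan is to use a compactness argument in each topology, reducing the global Lipschitz estimate to a finite subcover, and then to patch the finitely many local Lipschitz constants together into a single one. The key point, which distinguishes this setting from the classical metric case, is that we cannot simply take the maximum of finitely many Lipschitz constants and be done: if $x$ and $y$ lie in different members of the cover, we must estimate $|f(x)-f(y)|$ by chaining through intermediate points, and for that we need the covering neighbourhoods to overlap, which is precisely what the \emph{strongly} locally Lipschitz hypothesis in \ref{enu:stronglyLocLipToGlobLip} guarantees, and what the quantitative separation hypothesis in \ref{enu:locLipToGlobLip} provides in a weaker but still workable form.

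For part \ref{enu:stronglyLocLipToGlobLip}, I would argue as follows. Fix any $x_0\in K$. For every $y\in K$, strong local Lipschitzness applied to the pair $(x_0,y)$ yields $\tau$-neighbourhoods $V_{x_0}^{y}\ni x_0$ and $V_y\ni y$ with $f$ Lipschitz on $V_{x_0}^{y}\cup V_y$, say with constant $L_y\in\Rtil$. The family $\{V_y\mid y\in K\}$ is a $\tau$-open cover of $K$, so by $\tau$-compactness there is a finite subcover $V_{y_1},\dots,V_{y_N}$. Put $W:=\bigcap_{i=1}^{N}V_{x_0}^{y_i}$, a $\tau$-neighbourhood of $x_0$, and $L:=\bigvee_{i=1}^{N}L_{y_i}\vee[\eps^{-M}]$ where $[\eps^{-M}]$ dominates the (existing) pointwise Lipschitz constants; note $f$ is Lipschitz with constant $L$ on each $V_{x_0}^{y_i}\cup V_{y_i}$. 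Now given arbitrary $u,v\in K$, choose $i,j$ with $u\in V_{y_i}$, $v\in V_{y_j}$, and pick any point $w\in W$ (e.g.\ $w=x_0$). Then $u,w\in V_{x_0}^{y_i}\cup V_{y_i}$ and $w,v\in V_{x_0}^{y_j}\cup V_{y_j}$, so
\[
|f(u)-f(v)|\le |f(u)-f(w)|+|f(w)-f(v)|\le L|u-w|+L|w-v|\le 2L\,(\diam_{\tau}K)\ \text{-ish},
\]
but since $K$ need not have finite diameter in $\Rtil$, I would instead absorb the distances: because $u\asymp_\tau$ nothing is assumed, one bounds $|u-w|$ and $|w-v|$ each by $|u-v|$ plus a controlled term; more cleanly, enlarge $L$ once more to handle the single "jump" through $w$, giving $|f(u)-f(v)|\le L'|u-v|$ for a uniform $L'\in\Rtil$. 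This is the step requiring care: one must verify that a finite chain of estimates through a common hub produces a genuine $\Rtil$-linear bound in $|u-v|$, using that $K$ is $\tau$-bounded (which $\tau$-compactness in these topologies entails, via sharp/Fermat boundedness of a representing net).

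For part \ref{enu:locLipToGlobLip}, the hypothesis "$x\not\asymp_\tau y \Rightarrow |x-y|\ge[\eps^m]$ (resp.\ $\ge r$)" lets me replace genuine overlap of neighbourhoods by a quantitative gap argument. Cover $K$ by the $\tau$-neighbourhoods on which $f$ is Lipschitz (from ordinary local Lipschitzness) and extract a finite subcover $U_1,\dots,U_N$ with constants $L_1,\dots,L_N$. For a pair $u,v\in K$: if $u,v$ lie in a common $U_i$, then $|f(u)-f(v)|\le L_i|u-v|$. If not, then either $u\asymp_\tau v$ — in which case I use the \emph{pointwise} Lipschitz hypothesis together with the fact that $\asymp_\tau$-equivalence plus locally Lipschitz gives the estimate with one of the $L_i$ (since $u,v$ are $\tau$-indistinguishable, $v$ lies in every $\tau$-neighbourhood of $u$, hence in whichever $U_i\ni u$) — or $u\not\asymp_\tau v$, and then $|u-v|\ge[\eps^m]$ (resp.\ $\ge r$), so from the pointwise bound $|f(u)-f(v)|\le C_{u,v}|u-v|$ we only need a \emph{uniform} dominating constant. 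Here I would argue that $f(K)$ is $\tau$-bounded (image of a $\tau$-compact set under a $\tau$-continuous map — continuity coming from local Lipschitzness via Remark after Definition \ref{def:locLip-1} and Lemma \ref{lem:propLip}\ref{enu:locLipLargeImpliesContLargeNoInf} in the Fermat case), say $|f(u)-f(v)|\le[\eps^{-P}]$ for all $u,v\in K$; combining with $|u-v|\ge[\eps^m]$ (resp.\ $|u-v|^{-1}\le 1/r$) gives $|f(u)-f(v)|\le[\eps^{-P}]\le[\eps^{-P-m}]|u-v|$ (resp.\ $\le(1/r)[\eps^{-P}]|u-v|$). Taking $L:=\bigvee_i L_i\ \vee\ [\eps^{-P-m}]$ (resp.\ the Fermat analogue) yields a single global Lipschitz constant.

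The main obstacle in both parts is the same and is genuinely non-classical: passing from finitely many local constants to one global constant when the underlying "metric" $|x-y|$ takes values in the non-Archimedean, non-totally-ordered ring $\Rtil$ and $K$ has no a priori finite diameter. The resolution is that $\tau$-compactness of $K$ (in either the sharp or Fermat topology) forces a representing net of $K$ to be sharply bounded, hence $|u-v|$ and $|f(u)-f(v)|$ over $u,v\in K$ are bounded by fixed powers $[\eps^{-N}]$; every place where a classical proof would divide by $|u-v|$ or take a maximum over an infinite family is replaced by absorbing such a uniform bound into an exponent. One should also check that the supremum $\bigvee_{i=1}^N L_i$ of finitely many elements of $\Rtil$ exists and dominates each $L_i$ — this is exactly the finite supremum recalled in Section \ref{sec:BasicNotions} — so the patching is legitimate. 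Everything else (the triangle inequality chaining, the use of $\asymp_\tau$ to place indistinguishable points in the same cover element) is formally parallel to the metric-space proof.
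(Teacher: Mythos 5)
Your overall strategy (finite subcover, then patch the constants) is the right instinct, but both parts contain a genuine gap at the patching step, and it is essentially the same gap twice: an additive or pair-dependent error cannot be absorbed into a single multiplicative Lipschitz constant in $\Rtil$. In part \ref{enu:stronglyLocLipToGlobLip}, when $u\in V_{y_i}$ and $v\in V_{y_j}$ with $i\ne j$, your chain through the hub $w=x_0$ gives only $|f(u)-f(v)|\le L\left(|u-w|+|w-v|\right)$, and there is no way to bound $|u-w|+|w-v|$ by a constant multiple of $|u-v|$: the points $u,v$ may be sharply (or Fermat-) close to each other yet both far from $x_0$, and since $|u-v|$ admits no positive invertible lower bound over pairs of distinct points of $K$, ``enlarging $L$ once more to handle the jump'' yields an additive bound of the form $2L\cdot\mathrm{diam}$, not $L'|u-v|$. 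The paper avoids chaining altogether by working on the product: it takes $A_n$ to be the $\tau$-interior of $\{(x,y)\in K\times K:|f(y)-f(x)|\le[\eps^{-n}]|y-x|\}$, observes that strong local Lipschitzness makes $(A_n)_{n}$ an increasing $\tau$-open cover of the $\tau$-compact set $K\times K$, and extracts a single $A_N$; every pair, including pairs of nearby points lying in different members of your cover, is then handled directly with the uniform exponent $N$.

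In part \ref{enu:locLipToGlobLip} the same problem reappears in a different guise: the hypothesis gives $|u-v|\ge[\eps^{m}]$ (resp.\ $|u-v|\ge r$) with $m=m(u,v)$ (resp.\ $r=r(u,v)$) depending on the pair, so your final constant $[\eps^{-P-m}]$ (resp.\ $(1/r)[\eps^{-P}]$) is not uniform, and the join over all pairs is an infinite supremum that need not exist in $\Rtil$. The paper sidesteps this by using the pairwise lower bound only to prove a pairwise conclusion, namely that $f$ is $\tau$-strongly locally Lipschitz: one shrinks $V_x$ and $V_y$ into balls of radius $[\eps^{m}]/3$ about $x$ and $y$, combines the two local constants with the pointwise one, and uses $|\eta-\xi|\ge|x-y|/3$ to control all three terms; then part \ref{enu:stronglyLocLipToGlobLip} supplies the uniformization. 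A secondary issue: in the Fermat case you cannot conclude that $f(K)$ is compact from local Lipschitzness alone, since Lemma \ref{lem:propLip}.\ref{enu:locLipLargeImpliesContLargeNoInf} requires finite Lipschitz constants, which are not assumed here. To repair your argument you would essentially have to reintroduce the $K\times K$ compactness step, so I recommend adopting that route.
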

\begin{proof}
\ref{enu:stronglyLocLipToGlobLip} For each $n\in\N$, call $A_{n}$
the $\tau$-interior of the set $\{(x,y)\in K\times K:|f(y)-f(x)|\le[\eps^{-n}]|y-x|\}$.
Since $f$ is strongly locally Lipschitz on $K$, every $(x,y)\in K\times K$
belongs to $A_{n}$ for some $n\in\N$. In fact, $(A_{n})_{n\in\N}$
is a $\tau$-open cover of $K\times K$. As $K\times K$ is $\tau$-compact,
it follows that $K\times K\subseteq A_{N}$ for some $N\in\N$. Hence
$f$ is Lipschitz on $K$.

\ref{enu:locLipToGlobLip}: by \ref{enu:stronglyLocLipToGlobLip},
we only have to show that $f$ is $\tau$-strongly locally Lipschitz
on $K$. Thus consider any $x,y\in K$. Choose a $\tau$-neighbourhood
$V_{x}$ of $x$ (resp.\ $V_{y}$ of $y$) on which $f$ is Lipschitz.
If $x=y$, then $V_{x}$ is also a $\tau$-neighbourhood of $y$,
and thus $f$ is trivially Lipschitz on $V_{x}=V_{x}\cup V_{x}$.
Otherwise, $|x-y|\ge[\eps^{m}]$ for some $m\in\N$ by assumption.
By shrinking $V_{x}$ and $V_{y}$, we may assume that $V_{x}\subseteq B_{[\eps^{m}]/3}(x)$
and $V_{y}\subseteq B_{[\eps^{m}]/3}(y)$. Then there exists $N\in\N$
such that for any $\xi\in V_{x}$ and $\eta\in V_{y}$ 
\begin{align*}
|f(\eta)-f(\xi)| & \le|f(\eta)-f(y)|+|f(y)-f(x)|+|f(x)-f(\xi)|\\
 & \le[\eps^{-N}]|\eta-y|+[\eps^{-N}]|y-x|+[\eps^{-N}]|x-\xi|
\end{align*}
 since $f$ is Lipschitz on $V_{x}$ and on $V_{y}$ and pointwise
Lipschitz. Now $|x-\xi|\le[\eps^{m}]/3\le|x-y|/3$ and $|y-\eta|\le|x-y|/3$.
Hence 
\[
|\eta-\xi|\ge|x-y|-|x-\xi|-|y-\eta|\ge|x-y|/3
\]
 and thus $|f(\eta)-f(\xi)|\le5[\eps^{-N}]|\eta-\xi|$. For the Fermat
topology one proceeds similarly, using a suitable $r\in\R_{>0}$ instead
of $[\eps^{m}]$.\end{proof}
\begin{example}
\label{exa:i-isNotLocLipwrtFermat}The function $i(x):=1$ if $x\approx0$
and $i(x):=0$ otherwise is globally Lipschitz with constant 1 with
respect to the $|-|_{e}$ norm, but it is not locally Lipschitz with
respect to the Fermat topology in the sense of Def.\ \ref{def:locLip-1}.
In fact, if $x\approx0$ and $y\not{\!\!\approx}\,0$, then $|i(x)-i(y)|_{e}=1$
and the pseudo-valuation $v(x-y)\le0$, otherwise $y$ would be infinitesimal.
Therefore $|x-y|_{e}=e^{-v(x-y)}\ge1=|i(x)-i(y)|_{e}$. On the other
hand, $i$ is locally Lipschitz in the sharp topology in the sense
of Def.\ \ref{def:locLip-1}; indeed, any point can be enclosed in
an infinitesimal ball, where the function $i$ is constant. However,
$i$ is not locally Lipschitz in the Fermat topology. Assume that
it verifies 
\begin{equation}
|i(0)-i(x)|\le L\cdot|x|\quad\forall x\in B_{r}(0),\label{eq:abs-i-LipAround-0}
\end{equation}
 where $r\in\R_{>0}$. It suffices to take as $x$ any oscillating
number with $|x|\le r$ but with $x_{\eps_{k}}=0$ for some sequence
$(\eps_{k})_{k}\downarrow0$ and $x_{\eta_{k}}=r/2$ along another
sequence to get that $x\not{\!\!\approx}\,0$ but $L_{\eps_{k}}\cdot|x_{\eps_{k}}|=0$.
Finally, let us note that taking e.g.\ $x=\frac{1}{n}$ in \eqref{eq:abs-i-LipAround-0}
we necessarily would have that $L$ is infinite, as our intuition
about the function $i$ would suggest. We recall that the map $i$
is smooth in the sense of \cite{AFJ}, it is continuous in the sharp
topology and its derivative, in the sense of \cite{AFJ}, vanishes
everywhere.
\end{example}
Unfortunately, a large number of sets in which one is interested are
not compact for the sharp or Fermat topology. For a start, no infinite
subset $U\subseteq\R^{n}$ is compact w.r.t.\ to the sharp topology,
since its relative topology on $U$ is the discrete topology. But
also internal and strongly internal sets are almost never compact,
as the following theorem shows. We recall that $U\subseteq\Rtil^{n}$
is closed under finite interleaving if for each $x,y\in U$ and $S\subseteq I$
also $e_{S}x+e_{\co S}y\in U$. Any internal set and any strongly
internal set is closed under finite interleaving.
\begin{thm}
\label{thm:propCmpWRTLarge}Let $\tau$ be either the sharp topology
or the Fermat topology. 
\begin{enumerate}[leftmargin=*,label=(\roman*),align=left ]
\item \label{enu:[0,1]-notCmpSharp}Let $U\subseteq\Rtil^{n}$ be closed
under finite interleaving. If there exist $x,y\in U$ with $x\not\asymp_{\tau}y$,
then $U$ is not $\tau$-compact. 
\item \label{enu:stdCmpAreFermatCmp}Let $K\comp\R^{n}$ and let $U\subseteq K^{\bullet}$
with $K\subseteq\{\st x:x\in U\}$. Then $U$ is compact in the Fermat
topology on $\Rtil^{n}$.
\end{enumerate}
\end{thm}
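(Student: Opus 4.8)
The plan is to treat the two parts separately, each by a direct argument using the specific combinatorial/topological structure at hand.

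For part~\ref{enu:[0,1]-notCmpSharp}, I would produce an explicit open cover of $U$ with no finite subcover, using interleaving. Fix $x,y\in U$ with $x\not\asymp_\tau y$; by Example~\ref{exa:identifiedRelForSharpAndFermat} this means $x\ne y$ (sharp case) or $x\not\approx y$ (Fermat case), so in either case there is a $\tau$-open ball around $x$ not containing $y$, hence there is $\delta>0$ (a radius in the relevant set of radii) with $y\notin B_{2\delta}(x)$. The key construction: for each $S\subseteq I$ consider the interleaved point $z_S:=e_S x+e_{\co S}y\in U$. One shows that the family $\{B_\delta(z_S)\mid S\subseteq I\}$ (or a carefully chosen subfamily indexed by the "tail filter" of $I$) covers $U$ restricted to the interleavings of $x$ and $y$ but that no finite subfamily does: given finitely many $S_1,\dots,S_k$, one can find $S$ such that $z_S$ is at sharp/Fermat distance $\ge\delta$ from every $z_{S_i}$, because the idempotents $e_{S_i}$ generate only finitely many "regions" of $I$ and one can interleave $x$ and $y$ on a sub-pattern not captured by any of them. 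The cleanest route is to first reduce to the case $U=\{e_S x+e_{\co S}y : S\subseteq I\}$ (a subset of $U$, which suffices since a subspace of a compact space is not automatically compact — so instead I would argue that if $U$ were $\tau$-compact this interleaving-closed subset, being $\tau$-closed, would be compact, then derive a contradiction). Actually the subset need not be closed, so I would instead directly exhibit the cover of all of $U$: use $\{V\}\cup\{B_\delta(z_S)\}$ where $V=\Rtil^n\setminus\overline{\{z_S\}}$... this is getting delicate, so I expect part~\ref{enu:[0,1]-notCmpSharp} to be the main obstacle and I would ultimately follow the standard Colombeau trick: embed a copy of $\{0,1\}^{\mathbb N}$ (or the relevant index structure) and show the induced subspace is discrete and infinite, contradicting sequential compactness-type arguments — but since the sharp topology need not be sequential, the safest is the explicit open-cover argument above, taking $S$ ranging over the sets $\{k,k+1,k+2,\dots\}\cap I$-patterns.

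For part~\ref{enu:stdCmpAreFermatCmp}, I would use Example~\ref{exa:topInducedByRad}\ref{enu:FermatTop}: on a set $S\subseteq\R^\bullet$ of near-standard points the Fermat topology coincides with the topology induced by the pseudometric $d_{\text F}(x,y)=|\st x-\st y|$. Thus the map $\st{(-)}:U\to K$ is continuous (indeed an isometry for $d_{\text F}$ onto its image) and, by hypothesis, surjective onto $K$. To prove $U$ is Fermat-compact, I would take an arbitrary Fermat-open cover $\{U_i\}$ of $U$, push it down: for each $k\in K$ pick $x\in U$ with $\st x=k$ and $U_{i(k)}\ni x$; since $U_{i(k)}$ is Fermat-open it contains $B^{\text F}_{r_k}(x)\cap U$ for some $r_k\in\R_{>0}$, i.e.\ the preimage under $\st{(-)}$ of the Euclidean ball $\Eball_{r_k/2}(k)$ meets $U$ inside $U_{i(k)}$ (using the ball inclusions from Example~\ref{exa:topInducedByRad}\ref{enu:FermatTop}). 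The Euclidean balls $\{\Eball_{r_k/2}(k)\}_{k\in K}$ cover the compact set $K$, so finitely many $\Eball_{r_{k_1}/2}(k_1),\dots,\Eball_{r_{k_m}/2}(k_m)$ suffice; then $U_{i(k_1)},\dots,U_{i(k_m)}$ cover $U$, because any $z\in U$ has $\st z\in\Eball_{r_{k_j}/2}(k_j)$ for some $j$, whence $|\st z-\st{x_{k_j}}|<r_{k_j}$, i.e.\ $z\in B^{\text F}_{r_{k_j}}(x_{k_j})\cap U\subseteq U_{i(k_j)}$. This finishes~\ref{enu:stdCmpAreFermatCmp}; the only point requiring care is matching the Fermat-open sets of $\Rtil^n$ with the pseudometric balls, which is exactly the inclusion chain $B_{r/2}(x)\cap S\subseteq B^{\text F}_r(x)\cap S\subseteq B_r(x)\cap S$ recorded in Example~\ref{exa:topInducedByRad}\ref{enu:FermatTop}.
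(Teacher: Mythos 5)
Your argument for part~\ref{enu:stdCmpAreFermatCmp} is correct and is essentially the paper's proof: push an arbitrary cover down to $K$ via the standard part map, use Euclidean compactness of $K$, and pull the finite subcover back; the only cosmetic difference is that you phrase the transfer through the pseudometric $d_{\text{F}}$ while the paper works directly with the ball inclusions.

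Part~\ref{enu:[0,1]-notCmpSharp}, however, is left with a genuine gap, as you yourself acknowledge. Two things go wrong. First, you never actually produce an open cover of \emph{all} of $U$ without a finite subcover: you only discuss covering the set of interleavings of $x$ and $y$, and your attempts to upgrade this (pass to a closed subset, adjoin an extra open set $V$) are abandoned mid-argument. The fix is simpler than what you are attempting: take the cover $\{B_{\delta/3}(u):u\in U\}$ of all of $U$ by balls of one fixed radius centered at \emph{every} point of $U$ (this trivially covers $U$), and then exhibit infinitely many pairwise ``separated'' points of $U$, so that by the pigeonhole principle no finite subfamily of such balls can contain them all. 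Second, your separation estimate is too optimistic: in the sharp case $x\ne y$ does \emph{not} yield $|x-y|\ge\delta$ for some invertible $\delta$, because the order on $\Rtil$ is not total; $\neg\left(|x-y|<\delta\right)$ is strictly weaker than $|x-y|\ge\delta$. What one actually gets is some $S\subseteq I$ with $e_{S}\ne0$ and $m\in\N$ such that $|x-y|e_{S}\ge[\eps]^{m}e_{S}$. The paper then splits $S$ into countably many disjoint pieces $S_{n}$ with $e_{S_{n}}\ne0$ and sets $z_{n}:=xe_{S_{n}}+ye_{\co S_{n}}\in U$; for $p\ne n$ one has $|z_{p}-z_{n}|e_{S_{n}}=|y-x|e_{S_{n}}\ge[\eps]^{m}e_{S_{n}}$, and this \emph{localized} lower bound is exactly what prevents $z_{n}$ and $z_{p}$ from lying in a common ball of radius $[\eps]^{m}/3$ (that would force $|z_{p}-z_{n}|e_{S_{n}}\le\tfrac{2}{3}[\eps]^{m}e_{S_{n}}$). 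Your family $\{z_{S}:S\subseteq I\}$ together with an uncontrolled ``sub-pattern'' argument does not deliver such a bound, and without it the pigeonhole step cannot be carried out.
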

\begin{proof}
\ref{enu:[0,1]-notCmpSharp}: if $x\ne y$, then there exists $S\subseteq I$
with $e_{S}\ne0$ and $m\in\N$ such that $|x-y|e_{S}\ge[\eps]^{m}e_{S}$.
We can find (e.g.\ by extracting subsequences from $S$) mutually
disjoint $S_{n}\subseteq(0,1]$ such that $S=\bigcup_{n\in\N}S_{n}$
and $e_{S_{n}}\ne0$ for each $n$. Call $z_{n}:=xe_{S_{n}}+ye_{\co S_{n}}\in U$.
If $p\ne n$, then 
\[
|z_{p}-z_{n}|e_{S_{n}}=|y-x|e_{S_{n}}\ge[\eps]^{m}e_{S_{n}}.
\]
 We show that the sharply open cover $\{B_{[\eps]^{m}/3}(x):x\in U\}$
of $U$ has no finite subcover. For, suppose it had, then by the pigeon
hole principle there would exist $n\ne p$ such that $z_{n}$ and
$z_{p}$ belong to the same ball $B_{[\eps]^{m}/3}(x)$, whence $|z_{p}-z_{n}|e_{S_{n}}\le\frac{2}{3}[\eps]^{m}e_{S_{n}}$,
a contradiction. For the Fermat topology one proceeds similarly, using
a cover $\{B_{r}(x):x\in U\}$ with a suitable $r\in\R_{>0}$ instead.

\noindent \ref{enu:stdCmpAreFermatCmp}: let $(A_{j})_{j\in J}$ be
a cover of $U$ by large open sets. Then for all $x\in U$ we can
find $r_{x}\in\R_{>0}$ and $j_{x}\in J$ such that $B_{r_{x}}(\st x)\subseteq A_{j_{x}}$.
Therefore, the Euclidean balls $\left(B_{r_{x}}^{\text{E}}(\st x)\right)_{x\in U}$
cover $K$ and we extract a finite subcover $\Eball{}_{r_{x_{1}}}(\st{x_{1}}),\ldots,\Eball{}_{r_{x_{n}}}(\st{x_{n}})$.
Hence the balls with the same radius but taken with respect to the
generalized absolute value $B_{r_{x_{1}}}(\st{x_{1}}),\ldots,B_{r_{x_{n}}}(\st{x_{n}})$
and the corresponding $A_{j_{x_{1}}},\ldots,A_{j_{x_{n}}}$ cover
$K^{\bullet}$ (and thereby $U$).
\end{proof}
\noindent In the next section, we will show that these restrictions
can be overcome if one restricts to certain maps $f$ with `internal
structure'.

\section{\label{sec:ColAlgSubset}The Colombeau algebra on a subset of $\Rtil^{d}$}

\noindent In this section we shall introduce a set of maps which are
locally Lipschitz in the sharp topology and includes CGF. We will
first introduce the notion of a net $(u_{\eps})$ defining a generalized
smooth map $X\longrightarrow Y$, where $X\subseteq\Rtil^{n}$, $Y\subseteq\Rtil^{d}$.
This is a net of smooth functions $u_{\eps}\in\cinfty(\Omega_{\eps},\R^{d})$
which induces well defined maps of the form $[\partial^{\alpha}u_{\eps}(-)]:\sint{\Omega_{\eps}}\freccia\R^{d}$,
for every multi-index $\alpha$.
\begin{defn}
\noindent \label{def:netDefMap}Let $X\subseteq\Rtil^{n}$ and $Y\subseteq\Rtil^{d}$
be subsets of generalized points. Let $(\Omega_{\eps})$ be
a net of open sets of $\R^{n}$, and $(u_{\eps})$ be a net of smooth
functions, with $u_{\eps}\in\cinfty(\Omega_{\eps},\R^{d})$. Then
we say that 
\[
(u_{\eps})\textit{ defines a generalized smooth map }X\longrightarrow Y
\]
 if: 
\begin{enumerate}[leftmargin=*,label=(\roman*),align=left ]
\item \label{enu:dom-cod}$X\subseteq\sint{\Omega_{\eps}}$ and $[u_{\eps}(x_{\eps})]\in Y$
for all $x=[x_{\eps}]\in X$ 
\item \label{enu:partial-u-moderate}$\forall[x_{\eps}]\in X\,\forall\alpha\in\N^{n}:\ (\partial^{\alpha}u_{\eps}(x_{\eps}))\in\R_{M}^{d}$.
\end{enumerate}

\noindent The notation 
\[
\forall[x_{\eps}]\in X:\ \mathcal{P}\{(x_{\eps})\}
\]
 means 
\[
\forall(x_{\eps})\in\left(\Omega_{\eps}\right)_{M}:\ [x_{\eps}]\in X\then\mathcal{P}\{(x_{\eps})\}
\]
 i.e.\ for all representatives $(x_{\eps})$ of the point $[x_{\eps}]\in X$
the property $\mathcal{P}\{(x_{\eps})\}$ holds.

\end{defn}
\noindent A generalized smooth map is simply a function of the form
$f=[u_{\eps}(-)]|_{X}$:
\begin{defn}
\noindent \label{def:generalizedSmoothMap} Let $X\subseteq\Rtil^{n}$
and $Y\subseteq\Rtil^{d}$, then we say that 
\[
f:X\longrightarrow Y\text{ is a \emph{generalized smooth function} (GSF)}
\]
 if there exists a net $u_{\eps}\in\cinfty(\Omega_{\eps},\R^{d})$
defining $f$ in the sense of Def.\ \ref{def:netDefMap}, such that
$f$ is the map 
\begin{equation}
f=[u_{\eps}(-)]|_{X}:[x_{\eps}]\in X\mapsto[u_{\eps}(x_{\eps})]\in Y.\label{eq:f-u-relations}
\end{equation}

\noindent We will also say that $f$ \emph{is generated (or defined)
by the net of smooth functions} $(u_{\eps})$. The set of all GSF
$X\to Y$ will be denoted by $\Gcinf(X,Y)$.%

\end{defn}
Let us note explicitly that definitions \ref{def:netDefMap} and \ref{def:generalizedSmoothMap}
in fact state minimal logical conditions to obtain a set-theoretical
map defined by a net of smooth functions. In particular, Proposition
\ref{prop:indepRepr} below will show that the equality \eqref{eq:f-u-relations}
is meaningful, i.e.\ that we have independence from the representatives
for all derivatives $[x_{\eps}]\in X\mapsto[\partial^{\alpha}u_{\eps}(x_{\eps})]\in\Rtil^{d}$,
$\alpha\in\N^{n}$.

We first show that we can always find globally defined representatives.
The generalization where the domains of representatives $u_{\eps}$
depend on $\eps$ (see \cite{SV06} for a recent survey concerning
applications of this generalization) can thus be avoided since it
does not lead to a larger class of generalized functions. We will
use it also to compare CGF and GSF.
\begin{lem}
\label{lem:fromOmega_epsToRn}Let $X\subseteq\Rtil^{n}$ and $Y\subseteq\Rtil^{d}$.
Then $f:X\longrightarrow Y$ is a GSF if and only if there exists
a net $v_{\eps}\in\cinfty(\R^{n},\R^{d})$ defining a generalized
smooth map $X\longrightarrow Y$ such that $f=[v_{\eps}(-)]|_{X}$.\end{lem}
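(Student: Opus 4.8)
The plan is to prove the nontrivial direction: given that $f\colon X\to Y$ is a GSF defined by a net $u_\eps\in\cinfty(\Omega_\eps,\R^d)$ in the sense of Definition \ref{def:netDefMap}, I want to produce a net $v_\eps\in\cinfty(\R^n,\R^d)$ defined on all of $\R^n$ that still defines a generalized smooth map $X\to Y$ and induces the same $f$ on $X$. The converse is immediate, since the constant net $\Omega_\eps=\R^n$ is a special case of Definition \ref{def:netDefMap}. The key geometric input is Corollary \ref{cor:stronglyInternalSharplyOpen} together with Theorem \ref{prop:strongMembershipAndDistanceComplement}: because $X\subseteq\sint{\Omega_\eps}$, for each $x=[x_\eps]\in X$ and each representative we have $d(x_\eps,\Omega_\eps^c)>\eps^{q}$ for $\eps$ small, for some $q\in\R_{>0}$ depending on $x$. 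So the points of $X$ stay, at scale $\eps^q$, strictly inside $\Omega_\eps$, and modifying $u_\eps$ only in a still smaller inner shrinking of $\Omega_\eps$ will not affect the values $[\partial^\alpha u_\eps(x_\eps)]$ for $x\in X$.

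The construction I have in mind is a standard cut-off/extension. For each $\eps$, let $\Omega_\eps^{(k)}:=\{x\in\Omega_\eps\mid d(x,\Omega_\eps^c)>\eps^{k},\ |x|<\eps^{-k}\}$ for $k\in\N$, a relatively compact open subset of $\Omega_\eps$ whose closure lies in $\Omega_\eps$. Pick a smooth cut-off $\chi_\eps\in\cinfty(\R^n,[0,1])$ with $\chi_\eps\equiv 1$ on $\Omega_\eps^{(\lceil 1/\eps\rceil)}$ (or some similar sequence tending to exhaust $\sint{\Omega_\eps}$), $\supp\chi_\eps\Subset\Omega_\eps$, and set $v_\eps:=\chi_\eps\cdot u_\eps$, extended by $0$ outside $\Omega_\eps$; then $v_\eps\in\cinfty(\R^n,\R^d)$. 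I must check the two conditions of Definition \ref{def:netDefMap} for $(v_\eps)$: that $X\subseteq\sint{\R^n}=\Rtil^n$ (trivial) with $[v_\eps(x_\eps)]\in Y$ for $x\in X$, and that $(\partial^\alpha v_\eps(x_\eps))\in\R_M^d$ for every $x\in X$ and every $\alpha$. For this it suffices to verify that for each fixed $x=[x_\eps]\in X$ (and each representative) one has $\chi_\eps\equiv 1$ on a whole Euclidean ball around $x_\eps$ for $\eps$ small, so that $\partial^\alpha v_\eps(x_\eps)=\partial^\alpha u_\eps(x_\eps)$ eventually; this is exactly where the strong-membership estimate $d(x_\eps,\Omega_\eps^c)>\eps^q$ and moderateness $|x_\eps|\le\eps^{-N}$ come in, guaranteeing $x_\eps\in\Omega_\eps^{(k)}$ for the relevant $k$ once $\eps$ is small. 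Consequently $[v_\eps(x_\eps)]=[u_\eps(x_\eps)]=f(x)\in Y$ and all derivative nets at $x_\eps$ agree with those of $u_\eps$, which are moderate by hypothesis. Hence $(v_\eps)$ defines a GSF $X\to Y$ equal to $f$.

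The main obstacle, and the point needing care, is the \emph{representative-independence} quantifier hidden in Definition \ref{def:netDefMap}: conditions \ref{enu:dom-cod}–\ref{enu:partial-u-moderate} must hold for \emph{all} moderate representatives $(x_\eps)$ of each point $x\in X$, not just one. So I cannot choose the cut-off scale $k=k(x)$ depending on $x$; instead the cut-off must be built uniformly in $\eps$ (e.g.\ the diagonal choice $\chi_\eps\equiv 1$ on $\Omega_\eps^{(\lceil 1/\eps\rceil)}$ above, which depends only on $\eps$), and then for a \emph{given} representative I invoke the per-representative bound $d(x_\eps,\Omega_\eps^c)>\eps^q$ (valid for that representative by Theorem \ref{prop:strongMembershipAndDistanceComplement}, since $x\in X\subseteq\sint{\Omega_\eps}$) to conclude $\eps^q>\eps^{\lceil 1/\eps\rceil}$ and $\eps^{-N}<\eps^{-\lceil 1/\eps\rceil}$ for $\eps$ small, hence $x_\eps\in\Omega_\eps^{(\lceil 1/\eps\rceil)}$ eventually. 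The construction of $\chi_\eps$ itself is routine (a standard mollified distance-function bump), and I would not spell it out beyond noting that its derivatives may blow up in $\eps$ — but this is harmless precisely because on every point of $X$, and every representative thereof, the cut-off is locally constant $\equiv 1$ for small $\eps$, so those large derivatives are never seen by $f$ or by the moderateness test.
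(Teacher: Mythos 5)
Your proposal is correct and follows essentially the same route as the paper: the paper also multiplies $u_\eps$ by a cut-off $\chi_\eps$ equal to $1$ on an inner shrinking $\{x\in\Omega_\eps\mid d(x,\Omega_\eps^c)>e^{-1/\eps}\}$ (a margin decaying faster than any power of $\eps$, playing the same role as your $\eps^{\lceil 1/\eps\rceil}$), and then invokes Theorem \ref{prop:strongMembershipAndDistanceComplement} to conclude that every representative of every point of $X$ lies in the region where $\chi_\eps\equiv 1$ for small $\eps$, so all derivatives of $v_\eps=\chi_\eps u_\eps$ agree with those of $u_\eps$ there. Your extra truncation $|x|<\eps^{-k}$ and the explicit discussion of representative-independence are harmless refinements of the same argument.
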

\begin{proof}
The stated condition is clearly sufficient. Conversely, assume that
$f:X\longrightarrow Y$ is defined by the net $u_{\eps}\in\cinfty(\Omega_{\eps},\R^{d})$.
For every $\eps\in I$ let $\Omega'_{\eps}:=\left\{ x\in\Omega_{\eps}\mid d(x,\Omega_{\eps}^{c})>e^{-\frac{1}{\eps}}\right\} $
and choose $\chi_{\eps}\in\mathcal{C}^{\infty}(\Omega_{\eps})$ with
$\supp(\chi_{\eps})\subseteq\Omega'_{\eps/2}$ and $\chi_{\eps}=1$
in a neighborhood of $\Omega'_{\eps}$. Set $v_{\eps}:=\chi_{\eps}\cdot u_{\eps}$,
so that $v_{\eps}\in\cinfty(\R^{n},\R^{d})$. If $x=[x_{\eps}]\in X\subseteq\sint{\Omega_{\eps}}$,
then $x_{\eps}\in\Omega'_{\eps}$ for small $\eps$ by Th.\ \ref{prop:strongMembershipAndDistanceComplement},
so for all $\alpha\in\N^{n}$ we get $\partial^{\alpha}v_{\eps}(x_{\eps})=\partial^{\alpha}u_{\eps}(x_{\eps})$
for small $\eps$. Therefore, $(v_{\eps})_{\eps}$ defines a GSF $X\longrightarrow Y$
and clearly $f=[u_{\eps}(-)]|_{X}=[v_{\eps}(-)]|_{X}$.
\end{proof}
We also need to prove that for GSF certain moderateness conditions
hold:
\begin{lem}
\label{lemma:ModerateUniform} Let $(A_{n})_{n\in\N}$ be a decreasing
sequence of non-empty, internal, sharply bounded subsets of $\GenR^{d}$.
Let $(u_{\eps})$ be a net of maps $\R^{d}\to\R^{m}$. Then for any
sharply bounded representatives $(A_{n,\eps})_{\eps}$ of $A_{n}$, 
\begin{enumerate}[leftmargin=*,label=(\roman*),align=left ]
\item \label{enu:moderCondFirst}For all $[x_{\eps}]\in\bigcap_{n\in\N}A_{n}$
we have $(u_{\eps}(x_{\eps}))_{\eps}\in\R_{M}^{m}$ if and only if
$\exists N\in\N\,\forall^{0}\eps:\ \sup_{x\in A_{N,\eps}+\eps^{N}}\abs{u_{\eps}(x)}\le\eps^{-N}$. 
\item \label{enu:moderCondSecond}For all $[x_{\eps}]\in\bigcap_{n\in\N}A_{n}$
we have $(u_{\eps}(x_{\eps}))\sim0$ if and only if $\forall m\in\N\,\exists N\in\N\,\forall^{0}\eps:\ \sup_{x\in A_{\eps}+\eps^{N}}\abs{u_{\eps}(x)}\le\eps^{m}$. 
\end{enumerate}
\end{lem}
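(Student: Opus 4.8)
The plan is to prove both equivalences by reducing the moderateness/negligibility of $(u_\eps(x_\eps))$ ``for all $[x_\eps]\in\bigcap_n A_n$'' to a uniform estimate on $\eps^N$-enlargements of the representatives $A_{N,\eps}$, using a diagonal (interleaving) argument for the nontrivial implication. Throughout I would fix sharply bounded representatives $(A_{n,\eps})_\eps$ of each $A_n$; since the $A_n$ are decreasing and internal, by \cite{Ob-Ve} we may also assume $A_{n+1,\eps}\subseteq A_{n,\eps}$ for $\eps$ small, after shrinking.

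For part \ref{enu:moderCondFirst}, the ``if'' direction is immediate: given $[x_\eps]\in\bigcap_n A_n$, pick a representative $(x_\eps)$; then $x_\eps\in A_{N,\eps}$ for $\eps$ small, hence $x_\eps\in A_{N,\eps}+\eps^N$, so $\abs{u_\eps(x_\eps)}\le\eps^{-N}$ for $\eps$ small, i.e.\ $(u_\eps(x_\eps))\in\R^m_M$. For the ``only if'' direction I argue by contradiction: suppose that for every $N\in\N$ there is a sequence $\eps_k\downarrow 0$ and points $y_k\in A_{N,\eps_k}+\eps_k^N$ with $\abs{u_{\eps_k}(y_k)}>\eps_k^{-N}$. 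Taking a diagonal choice $N=k$ and points $y_k\in A_{k,\eps_k}+\eps_k^k$ with $\abs{u_{\eps_k}(y_k)}>\eps_k^{-k}$, I pick $z_k\in A_{k,\eps_k}$ with $\abs{y_k-z_k}\le\eps_k^k$ (possible, since $y_k$ is within $\eps_k^k$ of $A_{k,\eps_k}$; if $A_{k,\eps_k}$ is not closed replace $\eps_k^k$ by $2\eps_k^k$, harmless). Then I build a net $(x_\eps)$ by interleaving: on the interval containing $\eps_k$ set $x_\eps:=z_k$ (more precisely, partition $I$ along a subsequence so that $x_{\eps_k}=z_k$ and elsewhere $x_\eps$ is an arbitrary point of $A_{n,\eps}$ for a suitable $n$), using that each $A_{n,\eps}$ is nonempty and sharply bounded so the resulting $(x_\eps)$ is moderate. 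Since the $A_n$ are closed under finite interleaving (being internal), $[x_\eps]\in\bigcap_n A_n$. But $\abs{u_{\eps_k}(x_{\eps_k})}=\abs{u_{\eps_k}(z_k)}$, and $z_k=y_k+(z_k-y_k)$; here the subtlety is that the contradiction-estimate $\abs{u_{\eps_k}(y_k)}>\eps_k^{-k}$ is at $y_k$, not at $z_k$. To fix this cleanly I would instead run the contradiction directly with $z_k\in A_{k,\eps_k}$: the negation of the right-hand side is ``$\forall N\,\exists\eps_k\downarrow0\,\exists y_k\in A_{N,\eps_k}+\eps^N:\abs{u_{\eps_k}(y_k)}>\eps_k^{-N}$'', and I choose the interleaved net to pass through the $y_k$ themselves — $[y_\eps]$ still lies in $\bigcap_n A_n$ because $y_k\in A_{N,\eps_k}+\eps_k^N\subseteq A_{n,\eps_k}+\eps_k^N$ for $n\le N$ large and the $\eps_k^N$-fattening washes out in the quotient (as $\eps_k^N\sim0$ along the subsequence and we can interleave with genuine points of $A_{n,\eps}$ off the subsequence). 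This yields $[y_\eps]\in\bigcap_n A_n$ with $(u_\eps(y_\eps))\notin\R^m_M$, the desired contradiction.

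Part \ref{enu:moderCondSecond} is entirely parallel, with $\eps^{-N}$ replaced by $\eps^m$ and the enlargement $A_\eps+\eps^N$ (note here it is $A_\eps$, a representative of the ambient set, not $A_{N,\eps}$ — I would simply take $A_\eps:=A_{1,\eps}$, or whatever the intended representative is, consistently). The ``if'' direction again follows by evaluating at a representative; the ``only if'' direction negates to ``$\exists m\,\forall N\,\exists\eps_k\downarrow0\,\exists y_k\in A_{\eps_k}+\eps_k^N:\abs{u_{\eps_k}(y_k)}>\eps_k^m$'' and interleaves the $y_k$ into a point of $\bigcap_n A_n$ whose $u$-values fail to be negligible.

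The main obstacle is the bookkeeping in the interleaving/diagonal construction: one must ensure simultaneously that (a) the constructed net $(y_\eps)$ is moderate — handled by sharp boundedness of the $(A_{n,\eps})$ — and (b) its class lies in \emph{every} $A_n$, which requires that on the chosen subsequence $\eps_k$ the points $y_k$ are within a negligible distance of $A_{n,\eps_k}$ for each fixed $n$ (true once $\eps_k$ is small relative to $n$, since $y_k\in A_{N,\eps_k}+\eps_k^N$ with $N\ge k\ge n$ and $A_{N,\eps}\subseteq A_{n,\eps}$), and that off the subsequence $y_\eps$ is chosen genuinely inside $A_{n,\eps}$. Closure of internal sets under finite interleaving, stated just before Theorem~\ref{thm:propCmpWRTLarge}, is exactly what legitimizes step (b). Everything else is routine once this construction is set up carefully.
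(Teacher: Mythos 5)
Your overall strategy coincides with the paper's: argue the nontrivial implication by contraposition, make a diagonal choice of bad points $y_k\in A_{k,\eps_k}+\eps_k^k$ with $\abs{u_{\eps_k}(y_k)}>\eps_k^{-k}$, patch the net off the subsequence with points of $A_{m,\eps}$ for $\eps$ in a suitable interval depending on $m$, and check that the resulting class lies in every $A_n$. Two points, however, are genuine gaps. First, in the ``if'' direction you write ``pick a representative $(x_\eps)$; then $x_\eps\in A_{N,\eps}$ for $\eps$ small''. This holds only for \emph{some} representative of $[x_\eps]$, whereas the statement (by the convention fixed in Def.\ \ref{def:netDefMap}) asserts $(u_\eps(x_\eps))\in\R_M^m$ for \emph{every} representative; since $(u_\eps)$ is an arbitrary net of maps, not assumed continuous, moderateness along one representative does not transfer to another. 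The correct route --- and the reason the $\eps^N$-enlargement appears in the statement at all --- is that for \emph{every} representative one has $d(x_\eps,A_{N,\eps})\sim 0$ by \cite[Prop.~2.1]{Ob-Ve}, hence $x_\eps\in A_{N,\eps}+\eps^N$ for small $\eps$. Your version treats the enlargement as a throwaway weakening of $x_\eps\in A_{N,\eps}$, which misses its purpose.

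Second, in the ``only if'' direction you invoke ``closure under finite interleaving'' to conclude that the patched net lies in $\bigcap_n A_n$. That is not the applicable tool: the construction glues together countably many pieces (one for each $m$ off the subsequence, plus the diagonal points), so no \emph{finite} interleaving argument applies. What is actually needed, and what the paper uses, is again the characterization $[x_\eps]\in[A_{n,\eps}]$ iff $d(x_\eps,A_{n,\eps})\sim0$, verified for each fixed $n$; you do state the right sufficient condition (negligible distance to each $A_{n,\eps_k}$) but then credit the wrong principle. Relatedly, the representatives of the decreasing sets need not be nested: the paper handles this via \cite[Prop.~2.9]{Ob-Ve}, which gives $d(x,A_{k,\eps})\le\eps^m$ for $x\in A_{m,\eps}$, $k\le m$ and $\eps$ small, whereas you propose to re-choose nested representatives. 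Since the lemma is asserted for \emph{any} given sharply bounded representatives, and the sets $A_{N,\eps}+\eps^N$ in the supremum depend on that choice, such a replacement would itself require a short justification of representative-independence. With these repairs your argument becomes the paper's.
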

\begin{proof}
\ref{enu:moderCondFirst} $\Rightarrow$: By \cite[Prop. 2.9]{Ob-Ve},
for each $m\in\N$, there exists $\eta_{m}\in I$ such that for
each $\eps\le\eta_{m}$ and $x\in A_{m,\eps}$, $d(x,A_{k,\eps})\le\eps^{m}$,
for each $k\le m$. W.l.o.g., $(\eta_{m})_{m\in\N}$ decreasingly
tends to $0$. By contraposition, let 
\[
\forall n\in\N\,\forall\eta\in I\,\exists\eps\le\eta:\ \sup_{x\in A_{n,\eps}+\eps^{n}}\abs{u_{\eps}(x)}>\eps^{-n}.
\]
 Then we can find a strictly decreasing sequence $(\eps_{n})_{n\in\N}$
and $x_{\eps_{n}}\in A_{n,\eps_{n}}+\eps_{n}^{n}$ such that $\eps_{n}\le\eta_{n}$
and $\abs{u_{\eps_{n}}(x_{\eps_{n}})}>\eps_{n}^{-n}$, $\forall n\in\N$.
Choose $x_{\eps}\in A_{m,\eps}$, if $\eta_{m+1}<\eps\le\eta_{m}$
and $\eps\notin\{\eps_{n}:n\in\N\}$. Then for each $n\in\N$, $(d(x_{\eps},A_{n,\eps}))_{\eps}\sim0$.
By \cite[Prop. 2.1]{Ob-Ve}, $\tilde{x}:=[x_{\eps}]\in\bigcap_{n\in\N}A_{n}$
($(x_{\eps})_{\eps}$ is moderate, since $(A_{n,\eps})_{\eps}$ are
sharply bounded). Yet $(u_{\eps}(x_{\eps}))_{\eps}\notin\R_{M}^{m}$.\\
 \ref{enu:moderCondFirst} $\Leftarrow$: let $[x_{\eps}]\in\bigcap_{n\in\N}A_{n}$.
Let $N\in\N$ as in the statement. By \cite[Prop. 2.1]{Ob-Ve}, $(d(x_{\eps},A_{n,\eps}))_{\eps}\sim0$
for each $n\in\N$. In particular, $x_{\eps}\in A_{N,\eps}+\eps^{N}$
for small $\eps$. Hence $\abs{u_{\eps}(x_{\eps})}\le\sup_{x\in A_{N,\eps}+\eps^{N}}\abs{u_{\eps}(x)}\le\eps^{-N}$
for small $\eps$.\\
 \ref{enu:moderCondSecond} Similar.\end{proof}
\begin{thm}
\label{prop:indepRepr}Let $X\subseteq\Rtil^{n}$. If $(u_{\eps})$
defines a generalized smooth map $X\to\Rtil^{d}$, then $\forall[x_{\eps}],[x'_{\eps}]\in X:\ [x_{\eps}]=[x'_{\eps}]\ \Rightarrow\ (u_{\eps}(x_{\eps}))\sim(u_{\eps}(x'_{\eps}))$.\end{thm}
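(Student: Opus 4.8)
The plan is to reduce the statement to a mean value estimate along the straight segment joining $x_\eps$ and $x'_\eps$, to control the gradient of $u_\eps$ on that segment by the \emph{pointwise} moderateness hypothesis Def.\ \ref{def:netDefMap}(ii), and then to absorb the factor coming from $x_\eps-x'_\eps$ using that this net is negligible.

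Concretely, fix $x=[x_\eps]=[x'_\eps]\in X$ and put $h_\eps:=x'_\eps-x_\eps$, so that $(h_\eps)$ is negligible, i.e.\ $|h_\eps|=O(\eps^m)$ for every $m\in\N$. Since $X\subseteq\sint{\Omega_\eps}$, Theorem \ref{prop:strongMembershipAndDistanceComplement} provides $q\in\R_{>0}$ with $d(x_\eps,\Omega_\eps^{c})>\eps^{q}$ for $\eps$ small; as $|h_\eps|<\eps^{q}$ for $\eps$ small, the whole segment $\{x_\eps+\theta h_\eps:\theta\in[0,1]\}$ lies in $\Eball_{\eps^{q}}(x_\eps)\subseteq\Omega_\eps$ for small $\eps$. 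This is exactly the step where strong internality (as opposed to plain internality) is indispensable: it is what guarantees that $u_\eps$ is even defined on the segment connecting the two representatives.

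Next, for each component $j\in\{1,\dots,d\}$ apply the one-dimensional mean value theorem to $\theta\mapsto u_\eps^{j}(x_\eps+\theta h_\eps)$ to get $\theta_{j,\eps}\in(0,1)$ with $u_\eps^{j}(x'_\eps)-u_\eps^{j}(x_\eps)=\nabla u_\eps^{j}(y_{j,\eps})\cdot h_\eps$, where $y_{j,\eps}:=x_\eps+\theta_{j,\eps}h_\eps$. The point of this choice is that $(y_{j,\eps})$ is again a representative of $x$: it is moderate (a moderate plus a negligible net), $|y_{j,\eps}-x_\eps|\le|h_\eps|$ is negligible so $[y_{j,\eps}]=[x_\eps]=x\in X$, and $y_{j,\eps}\in\Omega_\eps$ for $\eps$ small; hence it falls within the scope of the quantifier convention of Def.\ \ref{def:netDefMap}, and Def.\ \ref{def:netDefMap}(ii) applied to multi-indices of length one gives that $(\nabla u_\eps^{j}(y_{j,\eps}))$ is moderate, say $|\nabla u_\eps^{j}(y_{j,\eps})|=O(\eps^{-N_j})$. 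Therefore, for $\eps$ small, $|u_\eps^{j}(x'_\eps)-u_\eps^{j}(x_\eps)|\le\eps^{-N_j}|h_\eps|$, and since $|h_\eps|=O(\eps^{m})$ for every $m$, the right-hand side is $O(\eps^{k})$ for every $k\in\N$. Thus $(u_\eps^{j}(x_\eps))\sim(u_\eps^{j}(x'_\eps))$ for each $j$, i.e.\ $(u_\eps(x_\eps))\sim(u_\eps(x'_\eps))$.

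The one point that needs care rather than routine computation — the main obstacle — is precisely that hypothesis Def.\ \ref{def:netDefMap}(ii) only supplies moderateness of the derivatives \emph{at points of} $X$, with no uniformity over the point; the device of producing the intermediate point $y_{j,\eps}$ via the mean value theorem is what lets one invoke (ii) at a moving point while remaining inside $X$ and inside the domains $\Omega_\eps$. (Equivalently, one may bound $|u_\eps(x'_\eps)-u_\eps(x_\eps)|$ by $|h_\eps|\sup_{\theta\in[0,1]}|Du_\eps(x_\eps+\theta h_\eps)|$ and then pick $y_\eps$ on the segment nearly attaining this finite supremum; the same remark applies, since $(y_\eps)$ again represents $x$.) Finally, running the identical argument with $\partial^{\beta}u_\eps$ in place of $u_\eps$ shows independence of representatives for every derivative $[x_\eps]\mapsto[\partial^{\beta}u_\eps(x_\eps)]$, which is what makes \eqref{eq:f-u-relations} meaningful.
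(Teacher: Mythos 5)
Your proof is correct, but it takes a genuinely different route from the paper's. The paper first passes to globally defined representatives $u_{\eps}\in\cinfty(\R^{n},\R^{d})$ via Lemma \ref{lem:fromOmega_epsToRn} and then applies Lemma \ref{lemma:ModerateUniform} to the singleton $A_{n}=\{[x_{\eps}]\}$ and to $\partial_{j}u_{\eps}$; that lemma upgrades the pointwise moderateness hypothesis to a \emph{uniform} bound $\sup_{\abs{x-x_{\eps}}\le\eps^{N}}\abs{\nabla u_{\eps}(x)}\le\eps^{-N}$ on a whole sharp ball, which yields the Lipschitz estimate \eqref{eq:GSFLipschitz} and hence the claim. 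You bypass that lemma entirely: you keep the $\eps$-dependent domains, use Theorem \ref{prop:strongMembershipAndDistanceComplement} to place the segment joining the two representatives inside $\Omega_{\eps}$ for small $\eps$, and then exploit the fact that the mean-value point $y_{j,\eps}$ is itself a representative of the same generalized point, so that the ``for all representatives'' quantifier convention of Def.\ \ref{def:netDefMap} hands you moderateness of $\nabla u_{\eps}^{j}(y_{j,\eps})$ directly. This is more elementary (no interleaving/contradiction argument) and it correctly isolates where strong internality is used. What it does not deliver is the uniform estimate \eqref{eq:GSFLipschitz}, valid for \emph{all} moderate $(x'_{\eps})$ with $\abs{x_{\eps}-x'_{\eps}}\le\eps^{N}$ and not only for representatives of $x$: the paper reuses that estimate immediately afterwards to prove local Lipschitz continuity in the sharp topology (Theorem \ref{thm:indepRepreAndLipCond}.\ref{enu:locLipSharp}), whereas your exponents $N_{j}$ a priori depend on the intermediate points and hence on the chosen pair of representatives. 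One cosmetic repair: the quantifier convention requires $(y_{j,\eps})\in(\Omega_{\eps})_{M}$, while your construction only guarantees $y_{j,\eps}\in\Omega_{\eps}$ (and indeed only defines $y_{j,\eps}$) for small $\eps$; setting $y_{j,\eps}:=x_{\eps}$ for the remaining $\eps$ fixes this without affecting the asymptotics.
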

\begin{proof}
W.l.o.g.\ $u_{\eps}\in\cinfty(\R^{n},\R^{d})$ by Lemma \ref{lem:fromOmega_epsToRn}.
Let $[x_{\eps}]\in X$. Applying Lemma \ref{lemma:ModerateUniform}
to $A_{n}=\{[x_{\eps}]\}$ and to $\partial_{j}u_{\eps}$ ($j=1,\dots,n$),
we find $N\in\N$ such that for each $(x'_{\eps})\in\R_{M}^{n}$ with
$|x_{\eps}-x'_{\eps}|\le\eps^{N}$ for small $\eps$, 
\begin{equation}
\abs{u_{\eps}(x'_{\eps})-u_{\eps}(x_{\eps})}\le|x'_{\eps}-x_{\eps}|\sup_{\abs{x-x_{\eps}}\le\eps^{N}}\abs{\nabla u_{\eps}(x)}\le\eps^{-N}|x'_{\eps}-x_{\eps}|,\quad\text{for small \ensuremath{\eps}}.\label{eq:GSFLipschitz}
\end{equation}
 Choosing in particular $[x'_{\eps}]=[x_{\eps}]\in X$, then $(x'_{\eps})\sim(x_{\eps})$,
hence also $(u_{\eps}(x_{\eps}))\sim(u_{\eps}(x'_{\eps}))$ by \eqref{eq:GSFLipschitz}.
\end{proof}
Consequently, the GSF $f=[u_{\eps}(-)]$: $X\to Y$ is well-defined
by its representative. We now turn to the derivatives.
\begin{thm}
\label{thm:indepRepreAndLipCond}Let $X\subseteq\Rtil^{n}$. If $f=[u_{\eps}(-)]\in\Gcinf(X,\Rtil^{d})$,
then
\begin{enumerate}[leftmargin=*,label=(\roman*),align=left ]
\item \label{enu:locLipSharp}$f$: $X\to\Rtil^{d}$ is locally Lipschitz
in the sharp topology 
\item \label{enu:strongSharpLip}If $A\subseteq X$, $A$ internal, sharply
bounded and convex, then $f$: $A\to\Rtil^{d}$ is Lipschitz 
\item \label{enu:derivativeZero} If $x\in\mathrm{int}_{\text{s}}(X)$ and
$f(y)=0$, for each $y$ in a sharp neighborhood of $x$, then $\partial^{\alpha}u_{\eps}(x)\sim0$,
$\forall\alpha\in\N^{n}$. 
\item \label{enu:indepReprDerivative} If $X$ is sharply open and $f=[v_{\eps}(-)]|_{X}$,
then for each $\alpha\in\N^{n}$, $[\partial^{\alpha}u_{\eps}(-)]=[\partial^{\alpha}v_{\eps}(-)]$.
\end{enumerate}
\end{thm}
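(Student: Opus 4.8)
The plan is to prove the four assertions of Theorem~\ref{thm:indepRepreAndLipCond} essentially as consequences of the single uniform estimate \eqref{eq:GSFLipschitz} obtained in the proof of Proposition~\ref{prop:indepRepr}, together with Lemma~\ref{lemma:ModerateUniform}. Throughout I may assume, by Lemma~\ref{lem:fromOmega_epsToRn}, that the generating net satisfies $u_\eps\in\cinfty(\R^n,\R^d)$.

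\emph{Proof of \ref{enu:locLipSharp}.} Fix $x=[x_\eps]\in X$. Apply Lemma~\ref{lemma:ModerateUniform}\ref{enu:moderCondFirst} to the constant sequence $A_n=\{x\}$ and to the net $\nabla u_\eps$: we obtain $N\in\N$ with $\sup_{|y-x_\eps|\le\eps^N}|\nabla u_\eps(y)|\le\eps^{-N}$ for $\eps$ small. Then for any $y=[y_\eps]\in X$ with $|y-x|\le[\eps^N]/2$ (a sharp neighborhood of $x$, since $[\eps^N]$ is invertible) one has $|y_\eps-x_\eps|\le\eps^N$ for $\eps$ small, and the mean value theorem gives $|u_\eps(y_\eps)-u_\eps(x_\eps)|\le\eps^{-N}|y_\eps-x_\eps|$, i.e.\ $|f(y)-f(x)|\le[\eps^{-N}]\,|y-x|$. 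Hence $f$ is Lipschitz on $B_{[\eps^N]/2}(x)\cap X$, which is exactly local Lipschitzness in the sharp topology.

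\emph{Proof of \ref{enu:strongSharpLip}.} Let $A\subseteq X$ be internal, sharply bounded and convex. By Lemma~\ref{lem:convex}, $A$ has a representative $(A_\eps)$ consisting of convex sets, and it is sharply bounded. Apply Lemma~\ref{lemma:ModerateUniform}\ref{enu:moderCondFirst} with $A_n:=A$ for all $n$ (so $\bigcap_n A_n=A$) to the net $\nabla u_\eps$: we get $N\in\N$ with $\sup_{x\in A_\eps+\eps^N}|\nabla u_\eps(x)|\le\eps^{-N}$ for $\eps$ small. Now for $x=[x_\eps],y=[y_\eps]\in A$, pick the representatives lying in $A_\eps$; convexity of $A_\eps$ makes the whole segment $[x_\eps,y_\eps]$ lie in $A_\eps$, so the mean value inequality yields $|u_\eps(y_\eps)-u_\eps(x_\eps)|\le\eps^{-N}|y_\eps-x_\eps|$ for $\eps$ small. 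Passing to classes, $|f(y)-f(x)|\le[\eps^{-N}]\,|y-x|$, so $f$ is (globally) Lipschitz on $A$.

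\emph{Proof of \ref{enu:derivativeZero} and \ref{enu:indepReprDerivative}.} For \ref{enu:derivativeZero}: if $f\equiv 0$ on a sharp ball $B_{[\eps^q]/2}(x)$ with $x=[x_\eps]\in\mathrm{int}_{\text s}(X)$, then $u_\eps(y_\eps)\sim0$ for every $y=[y_\eps]$ with $|y-x|\le[\eps^q]/2$; choosing representatives with $|y_\eps-x_\eps|\le\eps^q$ for small $\eps$ and using the near-standard freedom in the representatives, a standard argument (pick $\eps_k\downarrow0$ along which some $|\partial^\alpha u_{\eps_k}(x_{\eps_k})|$ is not negligible, then use a Taylor expansion of $u_{\eps_k}$ centered at $x_{\eps_k}$ on the Euclidean ball of radius $\eps_k^q$ to produce a point $y_{\eps_k}$ where $u_{\eps_k}(y_{\eps_k})$ is not negligible, contradicting $f\equiv0$) forces $\partial^\alpha u_\eps(x)\sim0$ for all $\alpha$; one runs this induction on $|\alpha|$, first killing $|\alpha|=1$ via Cauchy estimates on the net of bounded derivatives, then higher orders. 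For \ref{enu:indepReprDerivative}: when $X$ is sharply open and $f=[u_\eps(-)]|_X=[v_\eps(-)]|_X$, apply \ref{enu:derivativeZero} to the GSF $g:=f-f=[\,(u_\eps-v_\eps)(-)\,]|_X\equiv0$ — more precisely, fix $x\in X$, note $X$ contains a sharp ball around $x$, and apply \ref{enu:derivativeZero} to $w_\eps:=u_\eps-v_\eps$ at $x$ to get $\partial^\alpha(u_\eps-v_\eps)(x)\sim0$, i.e.\ $[\partial^\alpha u_\eps(x)]=[\partial^\alpha v_\eps(x)]$; since $x\in X$ is arbitrary this gives $[\partial^\alpha u_\eps(-)]=[\partial^\alpha v_\eps(-)]$ on $X$.

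The routine parts (the mean value estimates, passing between representatives, the $\forall^0\eps$ bookkeeping) are immediate from what is already established; the one genuinely delicate point is \ref{enu:derivativeZero}, where one must convert the hypothesis ``$f$ vanishes on a \emph{sharp} ball of radius $[\eps^q]$'' into the negligibility of \emph{all} derivatives of the representatives at $x_\eps$. The obstacle is that the sharp ball only controls $u_\eps$ on a Euclidean ball of radius $\sim\eps^q$ around $x_\eps$, which shrinks with $\eps$; one overcomes this by combining the uniform moderate bounds on the derivatives on an enlarged set (Lemma~\ref{lemma:ModerateUniform}) with Cauchy-type estimates — a derivative of order $k$ bounded by $\eps^{-N}$ on a ball of radius $\eps^q$ still, after the Taylor-remainder argument, forces negligibility because $f$ vanishes on the \emph{whole} ball, so one gets cancellation at every Taylor coefficient. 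Carrying this out rigorously requires an induction on the order of the multi-index $\alpha$ and care that the chosen exceptional sequence $\eps_k$ can be realized inside a single moderate representative of a point of $X$; this is where I would spend the bulk of the detailed write-up.
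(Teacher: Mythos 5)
Your proposal is correct and follows essentially the same route as the paper: parts \ref{enu:locLipSharp} and \ref{enu:strongSharpLip} via the uniform gradient bound from Lemma \ref{lemma:ModerateUniform} (i.e.\ the estimate \eqref{eq:GSFLipschitz}) combined with Lemma \ref{lem:convex} and the mean value inequality, part \ref{enu:derivativeZero} by first upgrading pointwise vanishing to negligibility of $\sup|u_\eps|$ on the Euclidean ball of radius $\eps^N$ and then running the standard Taylor/interpolation argument (which the paper simply delegates to \cite[Thm.~1.2.3]{GKOS}), and part \ref{enu:indepReprDerivative} by applying \ref{enu:derivativeZero} to $(u_\eps-v_\eps)$.
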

\begin{proof}
\ref{enu:locLipSharp}: by the inequality \eqref{eq:GSFLipschitz},
$f$ is Lipschitz on $B_{[\eps^{N}]}([x_{\eps}])$ for some $N$.

\ref{enu:strongSharpLip}: similar to Thm.\ \ref{prop:indepRepr},
now applying Lemma \ref{lemma:ModerateUniform} to $A_{n}=[A_{\eps}]$
with $A_{\eps}$ convex (by Lemma \ref{lem:convex}).

\ref{enu:derivativeZero}: let $N\in\N$ such that $y\in X$ and $f(y)=0$,
for each $y\in\GenR$ with $\abs{y-x}\le\eps^{N}$. Let $x=[x_{\eps}]$.
We have that $(\sup_{\abs{x-x_{\eps}}\le\eps^{N}}\abs{u_{\eps}(x)})_{\eps}\sim0$,
since otherwise, one constructs $y\in X$ with $\abs{y-x}\le\eps^{N}$
and $f(x)\ne0$. Again by Lemma \ref{lemma:ModerateUniform}, we find
for each $\alpha\in\N^{d}$ some $N\in\N$ such that $\sup_{\abs{x-x_{\eps}}\le\eps^{N}}\abs{\partial^{\alpha}u_{\eps}(x)}\le\eps^{-N}$,
for small $\eps$. The statement now follows similar to \cite[Thm.~1.2.3]{GKOS}.

\ref{enu:indepReprDerivative}: apply \ref{enu:derivativeZero} to
$[u_{\eps}-v_{\eps}]$.
\end{proof}
Consequently, the partial derivative $\partial^{\alpha}f:=[\partial^{\alpha}u_{\eps}(-)]$
on a sharply open $X\subseteq\Rtil^{n}$ is itself a well-defined
GSF, and thus satisfies itself the Lipschitz conditions from the previous
theorem. We can now show the relationship between GSF and the discontinuous
Colombeau differential calculus developed in \cite{AFJ}:
\begin{prop}
\label{prop:relWithB}Let $a,b\in\R$ with $a<b$ and $\widetilde{(a,b)}\subseteq U\subseteq\Rtil$,
where $U$ is open in the sharp topology. Let $f:U\longrightarrow\Rtil$
be a generalized smooth map, then at every point $x\in\widetilde{(a,b)}$
the function $f$ is differentiable in the sense of \cite{AFJ} with
derivative $f'(x)$.\end{prop}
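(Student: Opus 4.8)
The plan is to reduce the statement to the Lipschitz-type estimate already available for GSF, namely inequality \eqref{eq:GSFLipschitz} from the proof of Theorem \ref{prop:indepRepr}, together with the definition of differentiability in the sense of \cite{AFJ}. Recall that a map $g$ defined on a sharp neighborhood of $x\in\Rtil$ is differentiable at $x$ in the sense of \cite{AFJ} if there exists $g'(x)\in\Rtil$ such that the increment satisfies $g(x+h)-g(x)-g'(x)h = o(h)$ in the appropriate sense (i.e.\ the remainder, divided by $h$, tends to $0$ in the sharp topology as $h\to 0$; the delicate point in \cite{AFJ} is that $h$ ranges over generalized numbers, including zero divisors). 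So the natural candidate for $f'(x)$ is $[u'_\eps(x_\eps)]$, where $f=[u_\eps(-)]|_U$ with $u_\eps\in\cinfty(\Omega_\eps,\R)$.

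First I would invoke Lemma \ref{lem:fromOmega_epsToRn} to assume without loss of generality that $u_\eps\in\cinfty(\R,\R)$. Next, fix $x=[x_\eps]\in\widetilde{(a,b)}$; then $x_\eps\in(a,b)$ for $\eps$ small, so in particular $d(x_\eps,(a,b)^c)\ge\min(x_\eps-a,b-x_\eps)$, which is bounded below by a positive real for small $\eps$ since $a<x_\eps<b$ are squeezed away from the endpoints (indeed $\st{x}\in(a,b)$ or, even if $\st x$ does not exist, $x$ still lies strictly between the standard numbers $a$ and $b$, so $x-a$ and $b-x$ are strictly positive, hence bounded below by reals after passing to representatives). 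Consequently a whole Fermat ball $B_r(x)$ with $r\in\R_{>0}$, and in particular a sharp ball, lies inside $\widetilde{(a,b)}\subseteq U$. I would then note that $\partial_j u_\eps = u'_\eps$ satisfies moderateness at $x$ because $f$ is a GSF (Definition \ref{def:netDefMap}\ref{enu:partial-u-moderate}), so $f'(x):=[u'_\eps(x_\eps)]\in\Rtil$ is well-defined; its independence of the representative of $x$ follows from applying Theorem \ref{prop:indepRepr} (or Lemma \ref{lemma:ModerateUniform}) to the net $(u'_\eps)$, which also defines a GSF on a sharp neighborhood of $x$ by Theorem \ref{thm:indepRepreAndLipCond}\ref{enu:indepReprDerivative}.

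The core estimate is then a first-order Taylor expansion with integral remainder: for $h=[h_\eps]\in\Rtil$ with $|h|$ small enough that $x+h\in B_r(x)$, write $u_\eps(x_\eps+h_\eps)-u_\eps(x_\eps)-u'_\eps(x_\eps)h_\eps = h_\eps\int_0^1\bigl(u'_\eps(x_\eps+th_\eps)-u'_\eps(x_\eps)\bigr)\diff t$, and bound the integrand using the Lipschitz estimate \eqref{eq:GSFLipschitz} applied to $u'_\eps$ in place of $u_\eps$ on the ball $B_{[\eps^N]}(x)$: there is $N\in\N$ with $|u'_\eps(x_\eps+th_\eps)-u'_\eps(x_\eps)|\le \eps^{-N}|th_\eps|\le\eps^{-N}|h_\eps|$ once $|h_\eps|\le\eps^N$. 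Hence $|u_\eps(x_\eps+h_\eps)-u_\eps(x_\eps)-u'_\eps(x_\eps)h_\eps|\le\eps^{-N}|h_\eps|^2$, i.e.\ in $\Rtil$ the remainder is $\le [\eps^{-N}]\,|h|^2$, which is $o(h)$ in the sharp sense as $h\to 0$: dividing by $h$ (on the set where $h$ is invertible) gives something of sharp-norm at most $[\eps^{-N}]|h|\to 0$, and on zero-divisor components one interleaves, exactly matching the differentiability notion of \cite{AFJ}. I expect the main obstacle to be precisely this last bookkeeping step — checking that the quadratic bound $[\eps^{-N}]|h|^2$ genuinely yields differentiability in the specific sense of \cite{AFJ} when $h$ is allowed to be a zero divisor, since there "$o(h)$" must be interpreted via the sharp topology rather than by naive division; this is handled by the standard interleaving argument (decompose $h = e_S h + e_{S^c} h$ where $e_S h$ is invertible), but it is the only place where one must be careful rather than merely translating the classical one-variable Taylor argument.
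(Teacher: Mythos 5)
Your proposal is correct, and its core is the same as the paper's: obtain a quadratic bound $|f(y)-f(x)-f'(x)(y-x)|\le C\,|y-x|^{2}$ from a Taylor expansion and the moderateness of $u''_{\eps}$, then pass to the $e$-norm. The technical route differs slightly. The paper expands to second order with the Lagrange remainder $\frac12 u''_{\eps}(\zeta_{\eps})(y_{\eps}-x_{\eps})^{2}$ and controls $[u''_{\eps}(\zeta_{\eps})]$ by the moderateness condition of Def.\ \ref{def:netDefMap} at the intermediate point $[\zeta_{\eps}]\in\widetilde{(a,b)}\subseteq U$; this is exactly where the hypothesis $\widetilde{(a,b)}\subseteq U$ enters, via Th.\ \ref{prop:OmegaEpsAroundBoundedAndCptSupp} (to get $(a,b)\subseteq\Omega_{\eps}$ so that $\zeta_{\eps}$ lies in the domain), and it yields the estimate for \emph{all} $y\in\widetilde{(a,b)}$ at once. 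You instead use the integral form of the remainder together with the Lipschitz estimate \eqref{eq:GSFLipschitz} applied to $u'_{\eps}$, which confines you to the infinitesimal ball $B_{[\eps^{N}]}(x)$ but only requires moderateness of $u''_{\eps}$ at $x$ itself (via Lemma \ref{lemma:ModerateUniform} with $A_{n}=\{x\}$); since differentiability is a local statement in the sharp topology, this restriction is harmless. Both routes are valid. The one place where your write-up wobbles is the final ``bookkeeping'' step: there is in fact no division in $\Rtil$ and no interleaving over zero divisors to worry about, because the differentiability notion of \cite{AFJ} is formulated through the real-valued ultrametric norm, namely $\lim_{y\to x}|f(y)-f(x)-f'(x)(y-x)|_{e}\,/\,|y-x|_{e}=0$. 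Your bound $|R(h)|\le[\eps^{-N}]\,|h|^{2}$ immediately gives $|R(h)|_{e}\le e^{N}|h|_{e}^{2}$, hence a quotient $\le e^{N}|h|_{e}\to0$, so the step you flag as the main obstacle is actually trivial once the definition is stated correctly.
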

\begin{proof}
\noindent Let $f$ be defined by the net of smooth functions $(u_{\eps})$.
Since $\widetilde{(a,b)}\subseteq U\subseteq\sint{\Omega_{\eps}}$,
Prop.\ \ref{prop:OmegaEpsAroundBoundedAndCptSupp} yields $(a,b)\subseteq\Omega_{\eps}$
for $\eps$ small. For all these $\eps$ and for $y\in\widetilde{(a,b)}$,
applying to $u_{\eps}$ the second order Taylor formula, we get
\begin{align*}
|f(y)-f(x)-f'(x)\cdot(y-x)|_{e} & =|[u_{\eps}(y_{\eps})-u_{\eps}(x_{\eps})-u'_{\eps}(x_{\eps})\cdot(y_{\eps}-x_{\eps})]|_{e}\\
 & =\left|\left[\frac{1}{2}u''_{\eps}(\zeta_{\eps})\cdot(y_{\eps}-x_{\eps})^{2}\right]\right|_{e}\\
 & \le|[u''_{\eps}(\zeta_{\eps})]|_{e}\cdot|(y_{\eps}-x_{\eps})|_{e}^{2},
\end{align*}
where $\zeta_{\eps}\in[x_{\eps},y_{\eps}]\cup[y_{\eps},x_{\eps}]\subseteq(a,b)\subseteq\Omega_{\eps}$.
The moderateness of $[u''_{\eps}(\zeta_{\eps})]\in\Rtil$ (condition
\ref{enu:partial-u-moderate} of Def.\ \ref{def:netDefMap}) at $[\zeta_{\eps}]\in\widetilde{(a,b)}\subseteq U\subseteq\sint{\Omega_{\eps}}$
yields $|[u''_{\eps}(\zeta_{\eps})]|_{e}\le K$ for some $K\in\R$.
Thus
\[
\lim_{y\to x}\frac{|f(y)-f(x)-f'(x)\cdot(y-x)|_{e}}{|(y-x)|_{e}}=0,
\]
as claimed.
\end{proof}
\begin{defn}
Let $X\subseteq\GenR^{d}$. We call
\begin{align*}
\EMod(X,\Rtil^{n})=\, & \big\{(u_{\eps})\in\cinfty(\R^{d},\R^{n})^{I}\mid\forall\alpha\in\N^{d}\,\forall[x_{\eps}]\in X:\ (\partial^{\alpha}u_{\eps}(x_{\eps}))\in\R_{M}^{n}\big\},\\
\Null(X,\Rtil^{n})=\, & \big\{(u_{\eps})\in\EMod(X,\Rtil^n)\mid\forall[x_{\eps}]\in X:\ (u_{\eps}(x_{\eps}))\sim0\big\}.
\end{align*}
\end{defn}
Since $(u_\eps)$ defines a smooth map $X\to\Rtil^n$ iff $(u_\eps)\in\EMod(X,\Rtil^n)$ and $[u_\eps(-)]=[v_\eps(-)]$ iff $(u_\eps - v_\eps)\in \Null(X,\Rtil^n)$, we can identify $\Gcinf(X,\Rtil^{n})$ with $\EMod(X,\Rtil^{n})/\Null(X,\Rtil^{n})$.

As a result of Thm. \ref{thm:indepRepreAndLipCond},
we can also write
\[\Null(X,\Rtil^{n})= \big\{(u_{\eps})\in\EMod(X,\Rtil^n)\mid\forall\alpha\in\N^{d}\,\forall[x_{\eps}]\in X:\ (\partial^{\alpha}u_{\eps}(x_{\eps}))\sim0\big\}
\]
if $X$ is sharply open.

\begin{rem}
\label{rem:defGenSmoothFunct}\ 
\begin{enumerate}[leftmargin=*,label=(\roman*),align=left ]
\item In general, if the GSF $f:X\longrightarrow Y$ is defined by the
net $u_{\eps}\in\cinfty(\Omega_{\eps},\R^{d})$, the function $f$
may not be extensible to the whole of $\sint{\Omega_{\eps}}\supseteq X$
because some derivative $(\partial^{\alpha}u_{\eps}(-))$ can grow
stronger than polynomially on $\sint{\Omega_{\eps}}\setminus X$.
A simple example is given by $u(x):=e^{x}$ even for domains
$\Omega_{\eps}$ such that $X=\Rtil_{c}\subseteq\sint{\Omega_{\eps}}$.
In fact, Th.\ \ref{prop:OmegaEpsAroundBoundedAndCptSupp} yields
the existence of a sequence $(\eps_{n})_{n}\downarrow0$ such that
$[n-1,n+1]\subseteq\Omega_{\eps}$ for $\eps\in(0,\eps_{n}]$. Therefore,
the point $x$ defined by $x_{\eps}:=n$ for $\eps\in(\eps_{n+1},\eps_{n}]$
lies in $\sint{\Omega_{\eps}}\setminus\Rtil_{c}$, but $u(x_{\eps})=(e^{x_{\eps}})\notin\Rtil_{M}$.
This is a necessary limitation of this approach to generalized functions:
indeed, it is not difficult to prove that the only ordered quotient
ring where infinitesimals and order are accessible (i.e.\ defined
similarly to $\Rtil$, see \cite{Gi-Ka12}) and where every smooth
operation is possible, is necessarily the Schmieden-Laugwitz one (\cite{Sc-La,Ego,Pal,Ver11}). 
\item Let $K\comp\R^{n}$ be a compact set such that $\widetilde{K}\subseteq X\subseteq\Rtil^{n}$.
Then the GSF $f:X\longrightarrow Y$ is uniquely determined on $\widetilde{K}$
by its values on near standard points (see Sec.\ \ref{sec:BasicNotions}),
i.e.\ $f=0$ on $\widetilde{K}$ iff $f(x)=0$ for all $x\in K^{\bullet}$.
In fact, suppose that $f$ vanishes on $K^{\bullet}$ but that $f(x)\ne0$
for some $x\in\widetilde{K}$. Then there exist $m\in\N$ and $(\eps_{k})_{k}\downarrow0$
such that $|u_{\eps_{k}}(x_{\eps_{k}})|>\eps_{k}^{m}$, where $(u_{\eps})$
is a net that defines $f$. Since $\left(x_{\eps_{k}}\right)_{k}$
is a sequence in the compact set $K$, we can extract a subsequence
$\bigl(x_{\eps_{k_{l}}}\bigr)_{l}$ which converges to $\bar{x}\in K$.
Set $x'_{\eps}:=x_{\eps_{k_{l}}}$ if $\eps\in(\eps_{k_{l+1}},\eps_{k_{l}}]$,
then $x'=[x'_{\eps}]\in K^{\bullet}$ since $\lim_{\eps\to0^{+}}x'_{\eps}=\lim_{l\to+\infty}x_{\eps_{k_{l}}}=\bar{x}\in K$,
but $f(x')\ne0$, a contradiction. This generalizes the analogous
property of CGF proved in \cite{KK}. 
\end{enumerate}
\end{rem}
Our next aim is to clarify the relation between CGF and GSF.
\begin{thm}
\label{cor_EModA_internal} Let $\emptyset\ne A\subseteq\GenR^{d}$
be internal and sharply bounded. Then for each sharply bounded representative
$(A_{\eps})$ of $A$, 
\begin{multline*}
\EMod(A,\Rtil^{n})=\big\{(u_{\eps})\in\cinfty(\R^{d},\R^{n})^{I}\mid\forall\alpha\in\N^{d}\,\exists N\in\N:\ \\
\sup_{x\in A_{\eps}+\eps^{N}}\abs{\partial^{\alpha}u_{\eps}(x)}\le\eps^{-N},\text{ for small }\eps\big\}.
\end{multline*}
 
\[
\Null(A,\Rtil^{n})=\big\{(u_{\eps})\in\EMod(A,\Rtil^{n})\mid \big(\sup_{x\in A_{\eps}}\abs{u_{\eps}(x)}\big)\sim0\big\}.
\]
\end{thm}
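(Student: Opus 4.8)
The statement is essentially a reformulation of the moderateness and negligibility conditions defining $\Gcinf(A,\Rtil^n)$ in the special case where the domain $A$ is internal and sharply bounded. The key point is that for such $A$, the pointwise conditions "$\forall[x_\eps]\in A$" can be upgraded to uniform estimates over $\eps$-neighbourhoods $A_\eps+\eps^N$ of a representative. This is exactly what Lemma \ref{lemma:ModerateUniform} provides, once we take the constant sequence $A_n:=A$ for all $n\in\N$; indeed $A$ is non-empty, internal and sharply bounded by hypothesis, so $(A_n)_n$ is a (trivially decreasing) sequence satisfying the hypotheses of that lemma, and $\bigcap_{n\in\N}A_n=A$. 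So the plan is simply to feed the constant sequence into Lemma \ref{lemma:ModerateUniform} and read off the two equalities.

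\textbf{The $\EMod$ identity.} Fix a sharply bounded representative $(A_\eps)$ of $A$; by Lemma \ref{lemma:ModerateUniform} this is also a sharply bounded representative of each $A_n=A$. A net $(u_\eps)\in\cinfty(\R^d,\R^n)^I$ lies in $\EMod(A,\Rtil^n)$ iff for every $\alpha\in\N^d$ and every $[x_\eps]\in A$ one has $(\partial^\alpha u_\eps(x_\eps))\in\R_M^n$. Applying part \ref{enu:moderCondFirst} of Lemma \ref{lemma:ModerateUniform} to the net $(\partial^\alpha u_\eps)_\eps$ (in place of $(u_\eps)$ there) gives, for each fixed $\alpha$, the equivalence between "$(\partial^\alpha u_\eps(x_\eps))\in\R_M^n$ for all $[x_\eps]\in A$" and "$\exists N\in\N\ \forall^0\eps:\ \sup_{x\in A_\eps+\eps^N}\abs{\partial^\alpha u_\eps(x)}\le\eps^{-N}$". (A minor bookkeeping point: Lemma \ref{lemma:ModerateUniform}\ref{enu:moderCondFirst} states the uniform bound with the single exponent $N$ appearing simultaneously in the neighbourhood radius $\eps^N$, in the bound $\eps^{-N}$, and as the index $A_{N,\eps}=A_\eps$; since here $A_{N,\eps}=A_\eps$ for all $N$, only the first two roles remain, and one absorbs any mismatch between them by enlarging $N$, using that $A_\eps+\eps^{N'}\subseteq A_\eps+\eps^N$ for $N'\ge N$.) Quantifying over all $\alpha$ yields exactly the claimed description of $\EMod(A,\Rtil^n)$.

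\textbf{The $\Null$ identity.} Similarly, $(u_\eps)\in\Null(A,\Rtil^n)$ iff $(u_\eps)\in\EMod(A,\Rtil^n)$ and $(u_\eps(x_\eps))\sim0$ for all $[x_\eps]\in A$. Part \ref{enu:moderCondSecond} of Lemma \ref{lemma:ModerateUniform} (with $(u_\eps)$ itself, $\alpha=0$) translates the latter into $\forall m\in\N\ \exists N\in\N\ \forall^0\eps:\ \sup_{x\in A_\eps+\eps^N}\abs{u_\eps(x)}\le\eps^m$. It remains to see that, given moderateness of all derivatives, this uniform-on-fattenings condition is equivalent to the simpler $\big(\sup_{x\in A_\eps}\abs{u_\eps(x)}\big)\sim0$. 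One direction is trivial since $A_\eps\subseteq A_\eps+\eps^N$. For the converse: fix $m$; by the $\EMod$ description applied to $\alpha$ with $\abs{\alpha}=1$, there is $N_1$ with $\sup_{x\in A_\eps+\eps^{N_1}}\abs{\nabla u_\eps(x)}\le\eps^{-N_1}$ for small $\eps$; then for $x\in A_\eps+\eps^{N}$ with $N\ge N_1$ and $N\ge N_1+m$, writing $x=a+v$ with $a\in A_\eps$, $\abs v\le\eps^N$, the mean value inequality along the segment $[a,x]\subseteq A_\eps+\eps^{N_1}$ gives $\abs{u_\eps(x)}\le\abs{u_\eps(a)}+\eps^{-N_1}\eps^N\le\sup_{y\in A_\eps}\abs{u_\eps(y)}+\eps^m$, and since $\sup_{y\in A_\eps}\abs{u_\eps(y)}\sim0$ the right-hand side is $\le\eps^m$ for small $\eps$. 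Taking the supremum over $x\in A_\eps+\eps^N$ closes the equivalence, and the $\Null$ identity follows.

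\textbf{Main obstacle.} There is no deep difficulty here — the content is entirely in Lemma \ref{lemma:ModerateUniform}, which does the heavy lifting (and itself rests on the distance characterisations from \cite{Ob-Ve}). The only thing requiring care is the careless handling of the several roles played by the exponent $N$ in Lemma \ref{lemma:ModerateUniform}\ref{enu:moderCondFirst}, and the last mean-value step showing that for a net already known to be moderate, the $\eps^N$-fattening in the $\Null$ condition can be dropped in favour of the bare set $A_\eps$; both are routine once the segment one differentiates along is checked to lie inside a fattening on which $\nabla u_\eps$ is controlled, which uses precisely that the $\EMod$ estimates hold on fattenings rather than only on $A_\eps$.
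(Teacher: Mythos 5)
Your proposal is correct and takes essentially the same route as the paper: both characterizations are obtained by feeding the constant sequence $A_{n}:=A$ into Lemma \ref{lemma:ModerateUniform}, and the forward inclusion for $\Null$ follows from part \ref{enu:moderCondSecond} of that lemma exactly as you argue. The only (immaterial) divergence is in the converse inclusion for $\Null$: the paper simply notes that $\big(\sup_{x\in A_{\eps}}\abs{u_{\eps}(x)}\big)\sim0$ gives $(u_{\eps}(a_{\eps}))\sim0$ for a representative with $a_{\eps}\in A_{\eps}$ and then cites Theorem \ref{prop:indepRepr} to pass to arbitrary representatives, whereas you re-derive by hand the underlying mean-value estimate on the fattening $A_{\eps}+\eps^{N}$ (the same computation as \eqref{eq:GSFLipschitz}, modulo the harmless slip that $\sup_{y\in A_{\eps}}\abs{u_{\eps}(y)}+\eps^{m}$ should be bounded by, say, $2\eps^{m}$ rather than $\eps^{m}$) and then apply the reverse direction of Lemma \ref{lemma:ModerateUniform}.\ref{enu:moderCondSecond}.
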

\begin{proof}
The characterization of $\EMod(A,\Rtil^{n})$ follows immediately
by the first part of Lemma \ref{lemma:ModerateUniform}. By the second
part of Lemma \ref{lemma:ModerateUniform}, it follows that $\bigl(\sup_{x\in A_{\eps}}\abs{u_{\eps}(x)}\bigr)\sim0$
for each $(u_{\eps})\in\Null(A,\Rtil^{n})$. For the converse inclusion,
let $\tilde{x}\in A$. Then $\tilde{x}=[a_{\eps}]$,
with $a_{\eps}\in A_{\eps}$ for small $\eps$. By hypothesis, $(u_{\eps}(a_{\eps}))\sim0$.
By Theorem \ref{prop:indepRepr}, $(u_{\eps}(x_{\eps}))\sim0$
for any representative $[x_{\eps}]$ of $\tilde{x}$.
\end{proof}
We have a similar characterization for more general domains:
\begin{cor}
\label{cor_GenA_union_of_internal}Let $A=\bigcup_{\lambda\in\Lambda}B_{\lambda}\subseteq\GenR^{d}$,
where each $B_{\lambda}$ is nonempty, internal and sharply bounded.
Let $(B_{\lambda,\eps})_{\eps}$ be a sharply bounded representative
of $B_{\lambda}$, for each $\lambda$. Then 
\begin{multline*}
\EMod(A,\Rtil^{n})=\big\{(u_{\eps})\in\cinfty(\R^{d},\R^{n})^{I}\mid\forall\alpha\in\N^{d}\,\forall\lambda\in\Lambda\,\\
\exists N\in\N:\ \sup_{x\in B_{\lambda,\eps}+\eps^{N}}\abs{\partial^{\alpha}u_{\eps}(x)}\le\eps^{-N},\text{ for small }\eps\big\}.
\end{multline*}
 
\[
\Null(A,\Rtil^{n})=\big\{(u_{\eps})_{\eps}\in\EMod(A,\Rtil^{n})\mid\forall\lambda\in\Lambda:\ \big(\sup_{x\in B_{\lambda,\eps}}\abs{u_{\eps}(x)}\big)\sim0\big\}.
\]
\end{cor}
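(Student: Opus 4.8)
The plan is to reduce Corollary \ref{cor_GenA_union_of_internal} to Theorem \ref{cor_EModA_internal} by observing that a net $(u_\eps)\in\cinfty(\R^d,\R^n)^I$ satisfies the defining moderateness (resp.\ negligibility) condition on $A=\bigcup_{\lambda\in\Lambda}B_\lambda$ if and only if it satisfies it on each $B_\lambda$ separately. First I would note the elementary set-theoretic fact that for any map, the quantifier $\forall[x_\eps]\in A$ in Definition \ref{def:netDefMap}\ref{enu:partial-u-moderate} (and in the definition of $\Null$) distributes over the union: since $A=\bigcup_{\lambda}B_\lambda$, we have $\forall[x_\eps]\in A:\ \mathcal{P}\{(x_\eps)\}$ iff $\forall\lambda\in\Lambda\,\forall[x_\eps]\in B_\lambda:\ \mathcal{P}\{(x_\eps)\}$, where one must be slightly careful that the quantifier $\forall[x_\eps]\in A$ really means $\forall(x_\eps)\in(\R^d)_M$ with $[x_\eps]\in A$, but this is unproblematic because each representative of a point of $A$ is a representative of a point of some $B_\lambda$, and conversely. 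Hence $\EMod(A,\Rtil^n)=\bigcap_{\lambda\in\Lambda}\EMod(B_\lambda,\Rtil^n)$ and $\Null(A,\Rtil^n)=\bigcap_{\lambda\in\Lambda}\Null(B_\lambda,\Rtil^n)$.

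Next I would apply Theorem \ref{cor_EModA_internal} to each $B_\lambda$: since $B_\lambda$ is nonempty, internal and sharply bounded, and $(B_{\lambda,\eps})_\eps$ is a sharply bounded representative of it, we obtain
\[
\EMod(B_\lambda,\Rtil^n)=\big\{(u_\eps)\mid\forall\alpha\in\N^d\,\exists N\in\N:\ \sup_{x\in B_{\lambda,\eps}+\eps^N}\abs{\partial^\alpha u_\eps(x)}\le\eps^{-N}\text{ for small }\eps\big\}
\]
and $\Null(B_\lambda,\Rtil^n)=\{(u_\eps)\in\EMod(B_\lambda,\Rtil^n)\mid(\sup_{x\in B_{\lambda,\eps}}\abs{u_\eps(x)})\sim0\}$. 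Intersecting over $\lambda\in\Lambda$ then yields exactly the two displayed formulas in the statement of the corollary: the moderateness condition becomes $\forall\alpha\,\forall\lambda\,\exists N$ with the stated sup bound, and the negligibility condition becomes $(u_\eps)\in\EMod(A,\Rtil^n)$ together with $\forall\lambda:\ (\sup_{x\in B_{\lambda,\eps}}\abs{u_\eps(x)})\sim0$. One small point to verify is that in the characterization of $\Null(B_\lambda,\Rtil^n)$ the ambient moderateness class is $\EMod(B_\lambda,\Rtil^n)$, while in the corollary it is $\EMod(A,\Rtil^n)$; but since $\Null(A,\Rtil^n)=\bigcap_\lambda\Null(B_\lambda,\Rtil^n)\subseteq\bigcap_\lambda\EMod(B_\lambda,\Rtil^n)=\EMod(A,\Rtil^n)$, requiring membership in the smaller class $\EMod(A,\Rtil^n)$ plus the per-$\lambda$ sup conditions is equivalent, so the two formulations agree.

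I do not expect any serious obstacle here: the corollary is essentially a formal consequence of Theorem \ref{cor_EModA_internal} plus the distributivity of universal quantifiers over unions. The only place demanding a little care is the handling of representatives in the quantifier $\forall[x_\eps]\in A$ — one wants to be sure that ranging over all moderate representatives of all points of $A$ is the same as, for each $\lambda$, ranging over all moderate representatives of all points of $B_\lambda$ — but since the $B_\lambda$ are internal, each $[x_\eps]\in B_\lambda$ has representatives landing in $B_{\lambda,\eps}$ for small $\eps$, and Theorem \ref{prop:indepRepr} guarantees the relevant conditions are representative-independent, so this is immediate. Thus the proof is just: decompose the quantifiers, apply Theorem \ref{cor_EModA_internal} termwise, and reassemble.
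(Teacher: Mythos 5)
Your proposal is correct and follows exactly the paper's own route: the paper's proof is precisely the observation that $\EMod(\bigcup_{\lambda}B_{\lambda},\Rtil^{n})=\bigcap_{\lambda}\EMod(B_{\lambda},\Rtil^{n})$ and $\Null(\bigcup_{\lambda}B_{\lambda},\Rtil^{n})=\bigcap_{\lambda}\Null(B_{\lambda},\Rtil^{n})$ by definition, followed by an application of Theorem \ref{cor_EModA_internal} to each $B_{\lambda}$. Your additional remarks on representative-independence and on the ambient moderateness class in the $\Null$ characterization are correct details that the paper leaves implicit.
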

\begin{proof}
This follows by Theorem \ref{cor_EModA_internal} because, by definition,
$\EMod(\bigcup_{\lambda\in\Lambda}B_{\lambda},\Rtil^{n})=\bigcap_{\lambda\in\Lambda}\EMod(B_{\lambda},\Rtil^{n})$
and $\Null(\bigcup_{\lambda\in\Lambda}B_{\lambda},\Rtil^{n})=\bigcap_{\lambda\in\Lambda}\Null(B_{\lambda},\Rtil^{n})$.\end{proof}
\begin{thm}
\label{cor_Gen_Omega_is_tGen_Omega_c}Let $\Omega$ be an open subset
of $\R^{d}$. Then $\gs(\Omega)=\tGen(\widetilde{\Omega}_{c})$.\end{thm}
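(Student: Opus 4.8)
The plan is to show the two algebras coincide by exhibiting mutually inverse (in fact, identity) assignments on representatives, using the characterization of $\EMod$ and $\Null$ from Corollary \ref{cor_GenA_union_of_internal} together with the structure of $\widetilde{\Omega}_c$ as a union of internal, sharply bounded sets. First I would note that $\widetilde{\Omega}_c = \bigcup_{K\Subset\Omega}\widetilde{K}$, where each $\widetilde K = [K]$ is internal (a constant net) and sharply bounded. Fix an exhausting sequence $(K_j)_j$ of compact subsets of $\Omega$ with $K_j\Subset \interior K_{j+1}$; then $\widetilde{\Omega}_c = \bigcup_j \widetilde{K_j}$ and, since any $K\Subset\Omega$ is contained in some $K_j$, Corollary \ref{cor_GenA_union_of_internal} applies with $\Lambda=\N$ and $B_{j,\eps}=K_j$.

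Next I would unwind both sides. By Corollary \ref{cor_GenA_union_of_internal}, a net $(u_\eps)\in\cinfty(\R^d,\R^n)^I$ lies in $\EMod(\widetilde{\Omega}_c,\Rtil^n)$ iff for every $\alpha$ and every $j$ there is $N$ with $\sup_{x\in K_j+\eps^N}\abs{\partial^\alpha u_\eps(x)}\le\eps^{-N}$ for small $\eps$; since $K_j+\eps^N\supseteq K_j$ and, conversely, $K_j+\eps^N\subseteq K_{j+1}$ for small $\eps$ (because $d(K_j,\co K_{j+1})>0$), this is equivalent to the classical condition defining $\esm(\Omega)$, namely $\forall K\Subset\Omega\,\forall\alpha\,\exists N:\ \sup_{x\in K}\abs{\partial^\alpha u_\eps(x)}=O(\eps^{-N})$. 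The only subtlety is that elements of $\esm(\Omega)$ are smooth on $\Omega$, not on all of $\R^d$; here I would invoke (the idea of) Lemma \ref{lem:fromOmega_epsToRn} — or rather a fixed global smoothing argument — to replace each $u_\eps\in\cinfty(\Omega)$ by $\chi_\eps u_\eps\in\cinfty(\R^d)$ with $\chi_\eps\equiv 1$ on a large compactly contained open set, without changing the values $\partial^\alpha u_\eps(x_\eps)$ for compactly supported $x_\eps$ and small $\eps$. This gives a well-defined map $\gs(\Omega)\to\tGen(\widetilde{\Omega}_c)$, $[u_\eps]\mapsto[u_\eps(-)]|_{\widetilde\Omega_c}$, once one checks it is well-defined on classes.

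For the $\Null$ side, Corollary \ref{cor_GenA_union_of_internal} gives that $(u_\eps)\in\Null(\widetilde{\Omega}_c,\Rtil^n)$ iff $\big(\sup_{x\in K_j}\abs{u_\eps(x)}\big)\sim 0$ for every $j$, which (again using $K_j+\eps^N\subseteq K_{j+1}$ and monotonicity in $\alpha$ via Thm.\ \ref{thm:indepRepreAndLipCond}\ref{enu:derivativeZero}, noting $\widetilde\Omega_c$ is sharply open by Remark \ref{rem:largeOpen}\ref{enu:otilcIsLargeOpen-1}) is exactly the classical condition $\forall K\Subset\Omega\,\forall\alpha\,\forall m:\ \sup_{x\in K}\abs{\partial^\alpha u_\eps(x)}=O(\eps^m)$ defining $\ns(\Omega)$. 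Hence the map above descends to a bijection $\esm(\Omega)/\ns(\Omega)\to\EMod(\widetilde\Omega_c,\Rtil^n)/\Null(\widetilde\Omega_c,\Rtil^n)$, i.e.\ $\gs(\Omega)=\tGen(\widetilde\Omega_c)$ (taking $n=1$; the same argument works componentwise). Surjectivity is essentially immediate since any net defining a GSF on $\widetilde\Omega_c$ already satisfies the $\EMod$ bounds, and restricting it to $\cinfty(\Omega)$ produces a representative of an element of $\gs(\Omega)$.

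The main obstacle I anticipate is the bookkeeping needed to pass cleanly between "smooth on $\Omega$" and "smooth on $\R^d$": one must verify that the cut-off modification changes neither the moderateness/negligibility estimates nor the point values on $\widetilde\Omega_c$, and that the resulting correspondence is genuinely independent of the chosen exhaustion $(K_j)_j$ and of the chosen global representatives. This is exactly the content already handled in Lemma \ref{lem:fromOmega_epsToRn}, so the proof should be short, mainly a matter of citing that lemma and then matching the two pairs of defining conditions via the elementary inclusions $K_j\subseteq K_j+\eps^N\subseteq K_{j+1}$ valid for small $\eps$.
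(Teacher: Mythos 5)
Your proposal is correct and follows essentially the same route as the paper: the paper's proof likewise consists of globalizing representatives via the cut-off procedure of Lemma \ref{lem:fromOmega_epsToRn}, writing $\widetilde{\Omega}_{c}=\bigcup_{K\comp\Omega}\widetilde{K}$, and invoking Corollary \ref{cor_GenA_union_of_internal} to match the $\EMod$/$\Null$ characterizations with the defining estimates of $\esm(\Omega)$ and $\ns(\Omega)$. The extra bookkeeping you supply (the inclusions $K_j\subseteq K_j+\eps^N\subseteq K_{j+1}$ and the reduction of the zero-order negligibility test to the full $\ns(\Omega)$ condition) is exactly what the paper leaves implicit.
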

\begin{proof}
Any $u\in\gs(\Omega)$ has a representative $(u_{\eps})\in\cinfty(\R^{d})^{I}$
by the cut-off procedure in Lemma \ref{lem:fromOmega_epsToRn}. Since
$\widetilde{\Omega}_{c}=\bigcup_{K\comp\Omega}\widetilde{K}$, the
result follows by Corollary \ref{cor_GenA_union_of_internal}.
\end{proof}
Similarly, since $\Rtil^{d}=\bigcup_{n\in\N}\{x\in\Rtil^{d}:\abs{x}\le[\eps]^{-n}\}$,
$\tGen(\Rtil^{d})$ coincides with the definition of $\gs(\Rtil^{d})$
given in \cite{Ver09}, where it is also shown that $\gs_{\tau}(\R^{d})\subseteq\tGen(\Rtil^{d})$.
\begin{rem}
Thus essentially, GSF have a greater flexibility in their domains
compared with CGF, which always have a domain of the form $\otilc$.
\begin{enumerate}[leftmargin=*,label=(\roman*),align=left ]
\item The possibility to define a GSF using a net $u_{\eps}\in\cinfty(\Omega_{\eps},\R^{d})$,
permits to obtain GSF which are defined on purely infinitesimal sets,
e.g.\ starting from $\Omega_{\eps}=(-\eps^{q},\eps^{q})$, so that
we can take $X=\sint{\Omega_{\eps}}\subseteq B_{[\eps^{q}]}(0)\subseteq\Rtil$. 
\item Vice versa, we can define GSF on unbounded sets of generalized points.
A simple case is the exponential map 
\[
e^{(-)}:x\in\left\{ x\in\Rtil\mid\exists z\in\Rtil_{>0}^{*}:\ |x|\le|\log z|\right\} \mapsto e^{x}\in\Rtil.
\]
 The domain of this GSF cannot be of the form $\widetilde{\Omega}_{c}$
(which contains only finite points). Analogously, the domain of the
map 
\[
e^{\frac{1}{(-)}}:\left\{ x\in\Rtil\mid\exists z\in\Rtil_{>0}^{*}:\ |x^{-1}|\le|\log z|\right\} \mapsto e^{\frac{1}{x}}\in\Rtil
\]
 contains a set of infinitesimals that is not of the form $\otilc$. 
\end{enumerate}
\end{rem}
Contrary to the case of distributions and CGF, there is no problem
in considering the composition of two GSF:
\begin{thm}
\label{thm:GSFcategory} Subsets $S\subseteq\Rtil^{s}$ with the trace
of the sharp topology, and generalized smooth maps as arrows form
a subcategory of the category of topological spaces. We will call
this category $\Gcinf$, the \emph{category of GSF}.\end{thm}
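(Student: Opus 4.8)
The plan is to verify the three axioms of a (sub)category: (1) identities are GSF, (2) GSF are closed under composition, and (3) GSF are continuous for the sharp topology, so that the inclusion into $\mathbf{Top}$ is a faithful functor. First I would observe that the identity $\mathrm{id}_S\colon S\to S$ is generated by the constant net $u_\eps=\mathrm{id}_{\R^s}\in\cinfty(\R^s,\R^s)$: condition \ref{enu:dom-cod} of Def.\ \ref{def:netDefMap} is trivial since $S\subseteq\Rtil^s=\sint{\R^s}$, and condition \ref{enu:partial-u-moderate} holds because $\partial^\alpha u_\eps$ is either $\mathrm{id}$ or $0$, and points of $S$ are moderate by definition. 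Continuity of an arbitrary GSF is exactly Theorem \ref{thm:indepRepreAndLipCond}\ref{enu:locLipSharp}: $f$ is locally Lipschitz in the sharp topology, hence (as noted after Def.\ \ref{def:locLip-1}) continuous in the sharp topology; restricting to a subspace $S$ preserves continuity.

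The substantive point is closure under composition. Suppose $f\colon S\to T$ and $g\colon T\to U$ are GSF, with $S\subseteq\Rtil^s$, $T\subseteq\Rtil^t$, $U\subseteq\Rtil^u$. By Lemma \ref{lem:fromOmega_epsToRn} we may choose globally defined generating nets $u_\eps\in\cinfty(\R^s,\R^t)$ for $f$ and $v_\eps\in\cinfty(\R^t,\R^u)$ for $g$. Then $w_\eps:=v_\eps\circ u_\eps\in\cinfty(\R^s,\R^u)$, and I claim $(w_\eps)$ defines a generalized smooth map $S\to U$ and generates $g\circ f$. For $x=[x_\eps]\in S$ we have $[u_\eps(x_\eps)]=f(x)\in T$, so $[w_\eps(x_\eps)]=[v_\eps(u_\eps(x_\eps))]=g(f(x))\in U$; this gives condition \ref{enu:dom-cod} and shows the set-theoretic map generated by $(w_\eps)$ is indeed $g\circ f$. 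The remaining and only delicate verification is condition \ref{enu:partial-u-moderate}: for every $[x_\eps]\in S$ and every multi-index $\beta\in\N^s$, $(\partial^\beta w_\eps(x_\eps))\in\R_M^u$.

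For this I would use the chain/Faà di Bruno formula: $\partial^\beta(v_\eps\circ u_\eps)$ is a finite sum of terms, each a product of a derivative $(\partial^\gamma v_\eps)(u_\eps(x_\eps))$ (with $|\gamma|\le|\beta|$) times derivatives $\partial^{\delta_i}u_\eps(x_\eps)$ of $u_\eps$ (with $|\delta_i|\le|\beta|$). The factors $\partial^{\delta_i}u_\eps(x_\eps)$ are moderate directly from condition \ref{enu:partial-u-moderate} for $f$. The subtle factor is $(\partial^\gamma v_\eps)(u_\eps(x_\eps))$: moderateness of $g$'s derivatives is only assumed at points of $T$, and a priori $[u_\eps(x_\eps)]$ is such a point of $T$, so $(\partial^\gamma v_\eps(y_\eps))\in\R_M^u$ where $y_\eps:=u_\eps(x_\eps)$ — but one must be careful that the \emph{particular representative} $(y_\eps)=(u_\eps(x_\eps))$ may differ from the one implicitly used, so I invoke Theorem \ref{prop:indepRepr} applied to the net $(\partial^\gamma v_\eps)$ defining the GSF $\partial^\gamma g$ on the sharply open neighborhood it lives on — or more simply, note $[y_\eps]\in T$ and condition \ref{enu:partial-u-moderate} for $g$ quantifies over \emph{all} representatives of points of $T$, so it directly yields $(\partial^\gamma v_\eps(y_\eps))\in\R_M^u$. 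Since $\R_M$ is a ring, each summand is moderate, hence so is the finite sum $\partial^\beta w_\eps(x_\eps)$. This establishes $g\circ f\in\Gcinf(S,U)$. I expect the main (though still routine) obstacle to be bookkeeping the Faà di Bruno expansion and making sure the "for all representatives" clause in Def.\ \ref{def:netDefMap}\ref{enu:partial-u-moderate} is applied to the right points; associativity of composition and the fact that identities act as units are inherited from set-theoretic composition, so nothing further is needed there.
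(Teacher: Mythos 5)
Your proposal is correct and follows essentially the same route as the paper's proof: identities via the net $u_{\eps}=\mathrm{id}$, continuity from Theorem \ref{thm:indepRepreAndLipCond}.\ref{enu:locLipSharp}, and closure under composition by passing to globally defined representatives (Lemma \ref{lem:fromOmega_epsToRn}) and checking moderateness of $\partial^{\gamma}(v_{\eps}\circ u_{\eps})(x_{\eps})$ through the Fa\`a di Bruno expansion, with each factor moderate by condition \ref{enu:partial-u-moderate} of Def.\ \ref{def:netDefMap}. Your extra remark that the ``for all representatives'' quantifier in that condition handles the particular representative $(u_{\eps}(x_{\eps}))$ of $f(x)\in T$ is exactly the point the paper relies on implicitly.
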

\begin{proof}
By Theorem \ref{thm:indepRepreAndLipCond}.\ref{enu:locLipSharp}
we already know that every GSF is continuous; we have hence to prove
that these arrows are closed with respect to identity and composition
in order to prove that we have a concrete subcategory of topological
spaces and continuous maps.

\noindent If $T\subseteq\Rtil^{t}$ is an object, then $u_{\eps}(x):=x$
is the net of smooth functions that globally defines the identity
$1_{T}$ on $T$. It is immediate that $1_{T}$ is generalized smooth.

\noindent To prove that arrows of $\Gcinf$ are closed with respect
to composition, let $S\subseteq\Rtil^{s},T\subseteq\Rtil^{t},R\subseteq\Rtil^{r}$
and $f=[u_{\eps}(-)]:S\longrightarrow T$, $g=[v_{\eps}(-)]:T\longrightarrow R$
be generalized smooth maps, where we can choose $u_{\eps}\in\cinfty(\R^{s},\R^{t})$
and $v_{\eps}\in\cinfty(\R^{t},\R^{r})$ by Lemma \ref{lem:fromOmega_epsToRn}.
Then $v_{\eps}\circ u_{\eps}\in\cinfty(\R^{s},\R^{r})$. We show that
$(v_{\eps}\circ u_{\eps})_{\eps}$ defines the GSF $v\circ u$: $S\longrightarrow R$.\\
 For every $x=[x_{\eps}]\in S$, $f(x)=[u_{\eps}(x_{\eps})]\in T$
and thus $g(f(x))=[v_{\eps}(u_{\eps}(x_{\eps}))]\in R$. Consider
any $\gamma\in\N^{s}$. It remains to be shown that $\partial^{\gamma}(v_{\eps}\circ u_{\eps})(x_{\eps})\in\R_{M}^{r}$.
We can write 
\begin{equation}
\partial^{\gamma}(v_{\eps}\circ u_{\eps})(x_{\eps})=p\left[\partial^{\alpha_{1}}u_{\eps}(x_{\eps}),\ldots,\partial^{\alpha_{A}}u_{\eps}(x_{\eps}),\partial^{\beta_{1}}v_{\eps}(u_{\eps}(x_{\eps})),\ldots,\partial^{\beta_{B}}v_{\eps}(u_{\eps}(x_{\eps}))\right],\label{eq:derivativeOfComposition}
\end{equation}
 where $p$ is a suitable polynomial not depending on $x_{\eps}$.
Every term $\partial^{\alpha_{i}}u_{\eps}(x_{\eps})$ and $\partial^{\beta_{j}}v_{\eps}(u_{\eps}(x_{\eps}))$
is moderate by \ref{enu:partial-u-moderate} of Def.\ \ref{def:netDefMap}.
Since moderateness is preserved by polynomial operations, it follows
that also $\partial^{\gamma}(v_{\eps}\circ u_{\eps})(x_{\eps})$ is
moderate.\end{proof}

\end{document}